\newtheorem{theorem}{Theorem}[section]
\newtheorem{prop}[theorem]{Proposition}
\newtheorem{lemma}[theorem]{Lemma}
\newtheorem{defi}[theorem]{Definition}
\newtheorem{kov}[theorem]{Corollary}
\theoremstyle{definition}
\newtheorem{megj}[theorem]{Remark}
\newtheorem{quest}[theorem]{Question}
\newcommand{\inte}{\mathop{\mathrm{int}}}
\newcommand{\cone}{\mathop{\mathrm{cone}}}
\newcommand{\conv}{\mathop{\mathrm{conv}}}
\begin{document}

\author{Bal\'azs Maga}

\address{E\"otv\"os Lor\'and University, P\'azm\'any P\'eter s\'et\'any 1/C, Budapest, H-1117 Hungary}
\email{magab@cs.elte.hu}

\title{Baire Categorical Aspects of First Passage Percolation II.}

\thanks{\includegraphics[height=5.0mm]{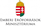}\ Supported by the \'UNKP-18-2 new national excellence program of the Ministry of Human Capacities. Supported by the Hungarian National Research, Development and Innovation Office--NKFIH, Grant 124003.}

\subjclass[2010]{Primary: 54E52; Secondary: 46C05, 54E35}
\keywords{residuality, metric spaces, Hilbert spaces, first passage percolation}

\begin{abstract}
In this paper we continue our earlier work about topological first passage percolation and answer certain questions asked in our previous paper. Notably, we prove that apart from trivialities, in the generic configuration there exists exactly one geodesic ray, in the sense that we consider two geodesic rays distinct if they only share a finite number of edges. Moreover, we show that in the generic configuration any not too small and not too large convex set arises as the limit of a sequence $\frac{B(t_n)}{t_n}$ for some $(t_n)\to\infty$. Finally, we define topological Hilbert first passage percolation, and amongst others we prove that certain geometric properties of the percolation in the generic configuration guarantee that we consider a setting linearly isomorphic to the ordinary topological first passage percolation.
\end{abstract}

\maketitle

\section{Introduction}

First passage percolation was introduced by Hammersley and Welsh in 1965 (see \cite{HW}) as a model to describe fluid flows through porous medium. It quickly became a popular area of probability theory, as one can easily ask very difficult questions. Many of these have still remained unsolved despite the growing interest from mathematicians, physicists and biologists.

The main setup is the following: we have a given graph, usually we like to consider the lattice $\mathbb{Z}^d$. We denote the set of nearest neighbor edges by $E$. We place independent, identically distributed, non-negative random variables with a distribution law $\mu$ on each edge $e\in{E}$, which is called the passage time of $e$, and denoted by $\tau(e)$. We think about it as the time needed to traverse $e$. Based on this, we can define the passage time of any finite path $\Gamma$ of consecutive edges as the sum of the passage times of contained edges:
\begin{displaymath}
\tau(\Gamma)=\sum_{e\in\Gamma}\tau(e).
\end{displaymath}
\noindent Using this definition, we might define the passage time between any two points, or in other words the $T$-distance of any two points $x,y\in\mathbb{R}^d$
\begin{displaymath}
T(x,y)=\inf_{\Gamma}\tau(\Gamma),
\end{displaymath}
\noindent where the infimum is taken over all the paths connecting $x'$ to $y'$, where $x'$ and $y'$ are the unique lattice points such that $x\in x'+[0,1)^d$, $y\in y'+[0,1)^d$. The term "distance" is appropriate here: one can easily show that $T:\mathbb{Z}^d\times\mathbb{Z}^d\to\mathbb{R}$ is a pseudometric, that is an "almost metric" in which the distance of distinct points might be 0.

In brief, this is the probabilistic setup. In the sequel when we recall results related to this theory, for the sake of brevity we will often omit the precise technical conditions, such as conditions about the finiteness of certain moments or the value of the distribution function in the infimum of its support.  Instead of it we will simply refer to "some mild conditions" about the distribution function and cite the source of the result. For the reader interested in the details the recently published book \cite{FPP} is also warmly recommended.

The topological variant of this theory is defined similarly and appeared in an earlier paper of the author (see \cite{M}). We fix $A\subseteq[0,\infty)$. To exclude trivialities, let $A$ have at least two elements. The passage time $\tau(e)$ of any edge $e$ will be an element of $A$, and passage times of paths and between points are defined as in the probabilistic setup: more explicitly, the passage time $\tau(\Gamma)$ of a path $\Gamma$ is simply the sum of passage times of the edges along the path, while for $x,y\in\mathbb{R}^d$ we define the passage time between them by $T(x,y)=\inf_{\Gamma:x'\to y'}\tau(\Gamma)$ where $x\in x'+[0,1)^d, y\in y'+[0,1)^d$ for the lattice points $x',y'$. Formally, the space of configurations is $\Omega=\times_{e\in{E}}A$, equipped with the product topology, while $A$ is considered with its usual subspace topology inherited from $\mathbb{R}$. If there might be ambiguity, we will write $T_\omega$ and $\tau_\omega$ for the passage times in the configuration $\omega\in\Omega$.

Our primary interests are classical questions of the probabilistic setup which make sense in the topological setup as well. More precisely, we examine whether a property which has probability 1 in the probabilistic setup holds in a residual subset of $\Omega$ in the topological setup. In \cite{M} we made some progress in certain problems of this type, however, in some cases we could not give complete answers. This lack of completeness was one of the inspirations to write this paper.

As it was proved in \cite{WR} for $d=2$ and any distribution, and in \cite{K} for arbitrary $d$ under mild conditions on the distribution, with probability 1 there exists an optimal path between any two lattice points, which is called a geodesic. Furthermore, if the probability distribution function is continuous, geodesics are unique with probability 1. In \cite{M} we verified by a natural argument that the topological model has the same properties in a residual set, if the set $A$ has no isolated points, which somewhat corresponds to the continuity of the distribution function. However, the existence and uniqueness of infinite geodesics proved to be a more difficult and interesting problem. We call an infinite path indexed by $\mathbb{N}$ a geodesic ray if each of its finite subpaths are finite geodesics. As finite geodesics exist between any pair of lattice points with probability 1 or in a residual set respectively in the two setups, by K\H{o}nig's lemma one can easily infer that any point is a starting point of an infinite geodesic with probability 1 or in a residual set. Now it is a natural question whether there are more distinct geodesic rays, where by distinct we mean that they share only finitely many edges. 

In the probabilistic setup it is conjectured that for continuous distributions there are infinitely many of them with probability 1. For $d=2$ and a certain class of distribution functions this claim was verified in \cite{AD}. In the topological setup, we certainly encounter a different behaviour: in \cite{M} we proved that in a residual subset of configurations there are at most distinct $4d^2+1$ geodesic rays. Moreover, if $\sup A> 5\inf A$ and $d\geq{2}$, we have exactly one geodesic ray in a residual subset of $\Omega$. (When $d=1$, both of the halflines are geodesic rays trivially, hence it is reasonable to consider $d\geq{2}$.) However, we could not answer whether there may exist distinct geodesic rays in a second category set for arbitrary nontrivial $A$, that is for $A$s such that the cardinality of $A$ is larger than 1. The following sharp version of the cited theorems cures this deficiency: \begin{theorem}
Assume $d\geq{2}$. If $A$ is nontrivial, that is it has cardinality larger than 1, then in a residual subset of $\Omega$ there exists exactly one geodesic ray.
\end{theorem}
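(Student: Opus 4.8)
The plan is to argue by a Baire category argument, showing that the set of configurations with exactly one geodesic ray is residual; since it is automatically nonempty (any configuration with all equal passage times has a unique ray up to sharing of cofinitely many edges), residuality gives the claim. The key point is to reduce the case of arbitrary nontrivial $A$ to the situation already handled in \cite{M}, where the condition $\sup A > 5 \inf A$ forced a unique ray. The obstacle in the general case is that $A$ may be a tiny set, e.g.\ $A = \{1, 1+\varepsilon\}$, so that no single edge can be made expensive enough to ``block'' geodesics; we get around this by blocking with \emph{walls} of many slightly-expensive edges rather than single edges.

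First I would fix, for a generic configuration $\omega$, a candidate geodesic ray $\gamma$ starting at the origin, obtained via K\H{o}nig's lemma from geodesics $\gamma_n$ from $0$ to suitable far-away lattice points. The heart of the proof is a density-type lemma: for every finite configuration (i.e.\ every basic open set $U$ in $\Omega$ determined by prescribing passage times on a finite edge set $F$), there is a smaller basic open set $U' \subseteq U$ and a radius $R$ such that for every $\omega \in U'$, every geodesic ray through the origin must, after leaving the ball $B_R(0)$, stay within a fixed cone (or more precisely, must eventually coincide with the reference ray up to finitely many edges). To build such a $U'$: pick $a < b$ in $A$, and around the ball $B_R(0)$ install concentric ``walls'' --- large but finite sets of edges all assigned passage time $b$ --- leaving only a narrow ``gate''; inside each shell between consecutive walls, set all free edges (those not in $F$) to $a$. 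By taking $R$ and the number of shells large relative to $|F|$ and to $b/a$, any path that wants to escape to infinity is forced through the gates, and a path using two different sequences of gates can be strictly shortened by rerouting through the cheaper common gate, contradicting geodesy. This is the step I expect to be the main obstacle: making the wall-and-gate construction quantitatively robust enough that it survives perturbing passage times within the small set $A$, and checking that the forced detour cost genuinely exceeds the savings, using only the two values $a,b$ and not a large ratio.

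Granting the lemma, I would run the standard category machinery: enumerate a countable base $\{U_k\}$ of $\Omega$, and for each $k$ apply the lemma to get $U'_k \subseteq U_k$ and radius $R_k$; the set $V_k$ of configurations $\omega$ for which all geodesic rays from the origin agree outside $B_{R_k}(0)$ (which contains $U'_k$ and is easily seen to be open, or at least to contain an open dense set, since the relevant geodesic segments depend on finitely many edges once we fix the gate structure) gives that $\bigcup_k U'_k$ is dense open, hence $G = \bigcap$ of the associated residual sets is residual. On $G$, any two geodesic rays from $0$ share all but finitely many edges, i.e.\ there is exactly one geodesic ray from the origin; finally, since a geodesic ray from any lattice point, restricted far out, is still a geodesic ray and (by a short argument using uniqueness of finite geodesics, which holds residually when $A$ has no isolated points, or by a direct merging argument in the general case) must merge with the one from $0$, there is exactly one geodesic ray in $\omega$ altogether. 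I would close by noting the reduction that handles $A$ with isolated points: one restricts attention to the sub-configuration space and checks the walls can still be erected, so no extra hypothesis on $A$ beyond nontriviality is needed.
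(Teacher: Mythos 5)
Your overall architecture --- fixing $a<b$ in $A$, making a cheap/expensive edge pattern inside a given cylinder set so that every geodesic ray is forced through a common corridor, then running the countable decomposition over starting points --- matches the paper's outline. But the step you yourself flag as the main obstacle is exactly where the proposal has a genuine gap, and it is not a routine one. With concentric walls around a ball of radius $R$ and a single gate, the comparison that is supposed to force a geodesic through the gate reads: a direct crossing of a wall of thickness $k$ costs at least roughly $kb$, while the reroute costs roughly $\bigl(k+c\,dR+c\,d(R+k)\bigr)a$, because the detour to the gate must run along the inner and outer faces of the wall, whose perimeters are proportional to $R$ and to $R+k$. Thickening the wall does not help: the outer detour grows linearly in $k$ as well, so the inequality you need is of the form $b>(2d+1)a$ up to constants --- i.e.\ precisely a ratio condition of the type $\sup A>5\inf A$ (for $d=2$) that was already handled in \cite{M}. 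For your own test case $A=\{1,1+\varepsilon\}$ no choice of $R$, thickness, or number of shells makes the forced detour cheaper than the blocked crossing, so the walls block nothing and your density lemma fails; the proposal as written only reproves the earlier, ratio-restricted theorem.

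The paper closes this gap with an anisotropic (``skew'') construction instead of concentric shells: $K_1=[-p,p]^d$ and $K_2=[-q',q]\times[-r,r]^{d-1}$, with cheap edges only on $\partial K_1$, $\partial K_2$ and the segment $[p\xi_1,q\xi_1]$, expensive edges elsewhere between the boundaries, and with $q$, then $r$, then $q'$ chosen successively large. The point is that in every case the cheap replacement path exceeds $|x_2-x_1|$ only by an additive overhead depending on parameters fixed earlier (such as $4dp$, or $4dp+2q-2p$, or $2r+4dp+2q-2p$), while the forced crossing distance $|x_2-x_1|$ is at least $q-p$, $r-q$ or $q'-p$ respectively; hence for any $\lambda>1$ with $(a+\varepsilon)\lambda<b-\varepsilon$ the multiplicative gap eventually beats the additive overhead, no matter how close $b$ is to $a$. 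That successive, direction-dependent choice of scales is the idea your concentric design is missing. The remaining parts of your plan --- existence of at least one ray via K\H{o}nig's lemma from the earlier paper, the decomposition into the sets $F(x)$ and the sets of configurations with two disjoint rays starting in $[-m,m]^d$, handled by taking $p>m$ in the same construction --- do line up with the paper, so the geometry of the blocking construction is the one essential missing ingredient.
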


This theorem leads to an interesting remark: in a residual set, any lattice point is the starting point of a geodesic ray, while there are no distinct geodesic rays. Thus any two of these geodesic rays merge after a finite number of edges.

Another topic considered in our earlier paper was the topological correspondant of the Cox--Durrett limit shape theorem. Let us denote by $B(t)$ the ball of radius $t$ centered at the origin in the pseudometric $T$, that is the subset of $\mathbb{R}^d$ we might reach from the origin in time $t$. A truly interesting result of the theory (see \cite{CD}) is that there exists a deterministic limit shape $B_\mu$, which has the property that as $t$ tends to $+\infty$, with probability one $\frac{B(t)}{t}$ tends to $B_\mu$ in some sense. Various works can be found in the literature based on this theorem about the speed of this convergence for example. We might ask if a similar statement holds in a residual set in the topological setup. Let us denote by $D_r$ the $\ell_1$ closed ball of radius $r$ centered at 0, and let $\mathcal{K}_A^d$ be the set of connected compact sets in $\mathbb{R}^d$ satisfying
\begin{displaymath}
D_\frac{1}{\sup A} \subseteq K \subseteq D_\frac{1}{\inf A},
\end{displaymath}
\noindent where the leftmost set is replaced by $\{0\}$ if $\sup A =\infty$, and the rightmost set is replaced by $\mathbb{R}^d$ if $\inf A = 0$. Furthermore, we say that $K\in\mathcal{P}_A^d$ if $K\in\mathcal{K}_A^d$, and for each $x\in{K}$ there is a "topological path" in $K$ of $\ell_1$-length at most $\frac{1}{\inf A}$ from $0$ to $x$. (From now on, we use the terms path and topological path in order to clearly distinguish paths in graph theoretical sense and paths in topological sense.) The closure of $\mathcal{P}_A^d$ in $\mathcal{K}_A^d$ with respect to the Hausdorff metric is simply denoted by $\overline{\mathcal{P}_A^d}$. In \cite{M} we proved

\begin{prop}
If $\frac{B(t_n)}{t_n}\to K$ in the Hausdorff metric in some configuration for some compact set $K$, where $(t_n)_{n=1}^{\infty}$ is a sequence diverging to $+\infty$, then $K\in\overline{\mathcal{P}_A^d}$. Moreover, if $\inf A =0$, or $\sup A=\infty$, then in a residual subset of $\Omega$ for any $K\in\overline{\mathcal{P}_A^d}$ there exists a sequence $(t_n)_{n=1}^{\infty}$ which tends to $+\infty$ and
\begin{displaymath}
\frac{B(t_n)}{t_n} \to K
\end{displaymath}
\noindent in the Hausdorff metric.
\end{prop}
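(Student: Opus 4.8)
The plan is to treat the two halves of the statement separately. The first half (necessity) is the softer one: if $\tfrac{B(t_n)}{t_n}\to K$, one checks directly that $K$ satisfies the defining constraints of $\overline{\mathcal P_A^d}$, modulo errors that vanish in the limit. To reach a lattice point $x'$ one needs a path with at least $|x'|_1$ edges, each of passage time at least $\inf A$, whence $B(t)\subseteq D_{t/\inf A}$; conversely a monotone lattice path from $0$ to $x'$ uses exactly $|x'|_1$ edges of passage time at most $\sup A$, whence $D_{t/\sup A}\subseteq B(t)$ up to an $O(1)$ error coming from the unit cubes. Connectedness of $B(t)$ follows from the graph-connectedness of the set of points lying on some near-optimal path from $0$. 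Finally, a near-geodesic from $0$ to $x'$ of passage time at most $t+\varepsilon$ has at most $(t+\varepsilon)/\inf A$ edges, so the polygonal topological path it traces, rescaled by $t$, has $\ell_1$-length at most $(1+o(1))/\inf A$ and lies within $O(1/t)$ of $\tfrac{B(t)}{t}$; extending it by a segment of $\ell_1$-length $<d/t$ to reach $x/t$ in place of $x'/t$ exhibits $\tfrac{B(t)}{t}$ as being within Hausdorff distance $o(1)$ of $\mathcal P_A^d$, and since $\overline{\mathcal P_A^d}$ is closed, the limit $K$ lies in it.

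For the residual half, note that the hyperspace of compact subsets of $\mathbb R^d$ under the Hausdorff metric is separable, so $\overline{\mathcal P_A^d}$ has a countable dense subset $\{K_j\}$, which we may take inside $\mathcal P_A^d$. For $j,m\in\mathbb N$ put $U_{j,m}=\{\omega:\exists\,t>m,\ d_H(\tfrac{B_\omega(t)}{t},K_j)<1/m\}$. Once each $U_{j,m}$ is shown to contain a dense open set, the residual set $R=\bigcap_{j,m}\operatorname{int}(U_{j,m})$ works: for $\omega\in R$ and each fixed $j$ one obtains $t_k\to\infty$ with $\tfrac{B_\omega(t_k)}{t_k}\to K_j$, and for an arbitrary $K\in\overline{\mathcal P_A^d}$ a diagonalization along a sequence $K_j\to K$ produces the required divergent $(t_n)$ with $\tfrac{B_\omega(t_n)}{t_n}\to K$.

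It remains, for fixed $K\in\mathcal P_A^d$, precision $1/m$, and a basic open set $W\subseteq\Omega$ constraining finitely many edges $F$, to produce $\omega\in W\cap U_{j,m}$ whose ball depends on only finitely many edges. Fix a small $\delta>0$ and a large integer $N$, let $K_\delta$ be the closed $\delta$-neighbourhood of $K$, and aim at $\tfrac{B_\omega(t)}{t}\approx K$ with $t\approx N$. One prescribes passage times so that inside $NK_\delta$ the ball fills out $NK$ by time roughly $t$ with a sharp boundary — using passage times near $\sup A$ along the "fast" directions and very small passage times along the thin appendages of $K$ (available because $\inf A=0$ in that case), chosen so that an appendage is entered only at time $\approx t$ and then traversed at negligible additional cost, so the ball cannot spill past it by more than $o(t)$ — while outside $NK_\delta$ one lays a "moat" of passage times exceeding $t$ when $\sup A=\infty$, or passage times near $\sup A$ when $\sup A<\infty$, blocking escape. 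The finitely many edges of $F$ lie in a bounded region near the origin and so contribute only an $O(1)$ additive error to passage times at scale $N$, which vanishes after division by $t$. Since no near-optimal path from $0$ ever needs to cross the moat, $B_\omega(t)$ is determined by the passage times on the finite set $F''$ consisting of the edges inside $NK_\delta$ together with one moat layer; hence every $\omega'$ agreeing with $\omega$ on $F''$ has the same ball, so $\omega\in\operatorname{int}(U_{j,m})$, and $W$ being arbitrary, $\operatorname{int}(U_{j,m})$ is dense.

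The main obstacle is the construction underlying the previous paragraph: pinning down $\partial B_\omega(t)$ so that $\tfrac{B_\omega(t)}{t}$ is genuinely Hausdorff-close to an essentially arbitrary, possibly very irregular $K\in\overline{\mathcal P_A^d}$. Concretely one wants a passage-time field realizing a gauge-type potential $\phi$ on $NK_\delta$ — with $\phi(0)\approx0$, $\phi\approx t$ on $\partial(NK_\delta)$, each edge cost at least $|\Delta\phi|$ so that $T(0,\cdot)\ge\phi$, and a $\phi$-monotone path from $0$ to every vertex so that $T(0,\cdot)\le\phi$ — while simultaneously preventing the geodesically far or thin parts of $K$ (which force cheap edges and hence threaten "leaks" when $\sup A<\infty$) from letting the ball escape a $o(1)$-neighbourhood of $K$. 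This is precisely where the definition of $\mathcal P_A^d$ (the $\ell_1$-length budget $1/\inf A$) and the hypothesis $\inf A=0$ or $\sup A=\infty$ are indispensable, and where passing through the dense class $\mathcal P_A^d$ before taking the Hausdorff closure does the real work; the rest is routine bookkeeping with the scaling parameter $N$.
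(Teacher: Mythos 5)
First, a point of comparison: the present paper does not prove this proposition at all --- it is Proposition~1.2, quoted from the earlier paper \cite{M} (``In \cite{M} we proved\dots''), so your argument can only be judged on its own terms. Your first half is essentially the standard argument and is fine modulo routine care (e.g.\ replacing the union of traced near-geodesics by a set that genuinely lies in $\mathcal{K}_A^d$, and handling the $\varepsilon$-slack coming from the fact that the infimum defining $T$ need not be attained). Your Baire-category skeleton for the second half --- a countable dense family $\{K_j\}\subseteq\mathcal{P}_A^d$, sets $U_{j,m}$ containing dense open sets, then diagonalization --- is also the right shape and matches how such statements (e.g.\ Theorem~1.3 here) are organized.

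The gap is that the heart of the second half is never carried out. For a fixed $K\in\mathcal{P}_A^d$, a precision $1/m$ and a given cylinder set, you must actually exhibit the passage-time prescription on a finite ($N$-dependent) set of edges and verify the two-sided Hausdorff estimate uniformly over the resulting subcylinder; your text explicitly defers exactly this step (``the main obstacle is the construction underlying the previous paragraph''), listing desiderata for a gauge-type potential $\varphi$ instead of constructing one and proving it exists for every $K\in\mathcal{P}_A^d$. In the case $\sup A=\infty$, $\inf A>0$ your sketch is essentially complete (cheap paths of $\ell_1$-length at most $1/\inf A$ inside $K$ to a $\delta$-net of $K$, enormous costs elsewhere and on a moat). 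But in the case $\inf A=0$, $\sup A<\infty$ two specific points fail as stated. (i) A single ``moat layer'' of cost near $\sup A$ blocks nothing, and since the uncontrolled edges outside the subcylinder's support may be arbitrarily cheap, $B_\omega(t)$ is \emph{not} determined by finitely many edges; one needs a moat of thickness proportional to $N$ together with a lower bound on the arrival time at its inner boundary, and even then one only gets uniform two-sided Hausdorff control on the subcylinder, not determination of the ball. (ii) ``An appendage is entered only at time $\approx t$'' is a wish, not a construction: for a general $K$ whose part outside $D_{1/\sup A}$ may be thin everywhere, with several thin pieces crossing each other or attached well inside $D_{1/\sup A}$, you must prescribe an explicit schedule --- for instance forcing $T(0,Ny)\geq N\bigl(1-\sup A\cdot \mathrm{dist}_{\ell_1}(y,K^c)\bigr)-o(N)$ along every cheap path while keeping arrival times at most $N(1+o(1))$ on a $\delta$-net of $K$ --- and then check that routes hopping between intersecting cheap paths cannot beat this bound and spill the ball outside a small neighbourhood of $K$. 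That existence-and-verification step is precisely where $D_{1/\sup A}\subseteq K$ and $\inf A=0$ do the real work; without it, what you have is a correct plan rather than a proof.
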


We note here that in \cite{M} we gave a slightly different and more complicated definition for $\mathcal{P}_A^d$, but these slightly different families have the same closures in $\mathcal{K}_A^d$. Thus from the view of the previous proposition and the upcoming arguments it is convenient to use this less elaborate definition.

The characterization in the case when $A$ is bounded away both from $0$ and $\infty$ remained an open question. A conjecture on a sufficient condition about $K$ was mentioned previously as a sidenote: any convex set in $\mathcal{K}_A^d$ should arise as a limit set. As the convex sets of $\mathcal{K}_A^d$ are also in $\mathcal{P}_A^d$, it is consistent with the first statement of Proposition 1.2 at least. In Section 3 we will verify the following general theorem which proves this conjecture:
\begin{theorem}
Let $A\subseteq[0,+\infty)$ be arbitrary. In a residual subset of $\Omega$ for any convex set $K\in\mathcal{K}_A^d$ there exists a sequence $(t_n)_{n=1}^{\infty}$ which tends to $+\infty$ and
\begin{displaymath}
\frac{B(t_n)}{t_n} \to K
\end{displaymath}
\noindent in the Hausdorff metric.
\end{theorem}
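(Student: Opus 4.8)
The plan is to reduce the statement to a density assertion about certain natural open sets and then to establish that density by an explicit construction of configurations whose balls $B(t)/t$ approximate the prescribed convex body.

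\textbf{Reductions and openness.} If $\inf A=0$ or $\sup A=\infty$, then every convex $K\in\mathcal{K}_A^d$ lies in $\mathcal{P}_A^d\subseteq\overline{\mathcal{P}_A^d}$ and the conclusion is already contained in Proposition 1.2; so I assume henceforth $0<\inf A<\sup A<\infty$ (the strict inequality because $A$ is nontrivial). Let $\mathcal{C}$ be the family of convex members of $\mathcal{K}_A^d$; since $\inf A>0$, $\mathcal{C}$ consists of compact convex subsets of the fixed compact set $D_{1/\inf A}$, hence $(\mathcal{C},d_H)$ is separable. Fix a countable dense $\{K_j\}_{j\ge1}\subseteq\mathcal{C}$ and put $U_{j,m}=\{\omega\in\Omega:\exists\,t>m,\ d_H(B_\omega(t)/t,K_j)<1/m\}$. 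Because $\inf A>0$, any path of passage time $\le t$ has at most $t/\inf A$ edges, so inside a bounded window $B_\omega(t)$ is determined by finitely many coordinates of $\omega$ and depends on them continuously (it is governed by minima of finitely many affine functions of the relevant passage times); thus each $U_{j,m}$ is open, being a union over $t>m$ of open sets, exactly as in the finite–region arguments of \cite{M}. Granting that each $U_{j,m}$ is also dense, $G:=\bigcap_{j,m}U_{j,m}$ is residual, and for $\omega\in G$ each $K_j$ occurs as $\lim B_\omega(t_n)/t_n$ along some $t_n\to\infty$; for an arbitrary convex $K\in\mathcal{K}_A^d$ one picks $K_{j_k}\to K$ and diagonalises, using $d_H(B_\omega(s)/s,K)\le d_H(B_\omega(s)/s,K_{j_k})+d_H(K_{j_k},K)$. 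So everything reduces to proving that $U_{j,m}$ is dense.

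\textbf{The construction.} Fix a basic open $V$ constraining finitely many edges $e_1,\dots,e_k$ near prescribed values of $A$, fix small $\varepsilon>0$, and write $K'=K_j$. Choose a polytope $P$ with $d_H(P,K')<\varepsilon$, $P\in\mathcal{C}$, all facets of diameter $<\varepsilon$, and with each facet's foot of perpendicular from the origin $\bar p_m=u_m/\|u_m\|_2^2$ lying in the relative interior of that facet $F_m$ (here $u_m$ is the outer normal normalised so that $\langle x,u_m\rangle=1$ on $F_m$); one may also arrange $\max_m\|u_m\|_\infty<\sup A$ by pushing $P$ outward by $\varepsilon^2$ wherever $K'$ meets $\partial D_{1/\sup A}$, which changes $d_H(P,K')$ negligibly. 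Pick $b\in A$ with $\max_m\|u_m\|_\infty\le b<\sup A$, and let $R$ be large. Start from the homogeneous \emph{slow} configuration in which every $e_i$–edge has passage time $b$. For each facet $F_m$ build a \emph{corridor} $\Pi_m$: a monotone lattice staircase in the orthant of $\bar p_m$ (i.e. using only steps $\operatorname{sgn}(u_m^{(i)})e_i$) running to a lattice point nearest $R\bar p_m$ and staying within $\ell_\infty$–distance $1$ of the segment $[0,R\bar p_m]$; arrange the configuration to be slow in a bounded ball about the origin (containing $e_1,\dots,e_k$) and let the corridors emanate from that ball's boundary, so that outside it they are pairwise edge–disjoint. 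On $\Pi_m$ put passage times from $A$ whose partial sums track $\tfrac1{|\bar p_m|_1}\cdot(\text{length})$ from above with error $o(\text{length})$ — possible since $\tfrac1{|\bar p_m|_1}\in[\inf A,\sup A]$ because $\bar p_m\in\partial P$ with $D_{1/\sup A}\subseteq P\subseteq D_{1/\inf A}$. This $\omega$ lies in $V$. For covering: a point $z\in(1-\varepsilon)P$ is $z=g_P(z)y$ with $y\in\partial P$; choosing a facet $F_m\ni y$ gives $|y-\bar p_m|<\varepsilon$, and the prefix of $\Pi_m$ ending near $g_P(z)R\,\bar p_m$ has passage time $\le(1-\varepsilon)R(1+\varepsilon)+O_\varepsilon(1)<R$, hence reaches a point within $\varepsilon R+O_\varepsilon(1)$ of $Rz$. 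Thus for $R$ large every point of $(1-\varepsilon)P$, and therefore every point of $K'$, lies within $O(\varepsilon)$ of $B_\omega(R)/R$.

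\textbf{Confinement.} Use the potential $\phi=g_P$, the Minkowski gauge of $P$. On a slow $e_i$–edge, $|\Delta_e\phi|\le\max(g_P(e_i),g_P(-e_i))\le\max_m\|u_m\|_\infty\le b=\tau(e)$. Along $\Pi_m$ (outside the bounded central part) every step is $\operatorname{sgn}(u_m^{(i)})e_i$, so $\Delta_e\phi=|u_m^{(i)}|\ge0$: since the corridor direction $\bar p_m$ is parallel to the facet normal $u_m$ and $\phi$ is linear on $\cone(F_m)$, the gauge is \emph{monotone} along the corridor. Hence for any maximal corridor sub–run $\sigma$ of a path $\Gamma$ one has $\sum_{e\in\sigma}\tau(e)\ge\sum_{e\in\sigma}\Delta_e\phi$ up to the tracking error $o(|\sigma|)$ (equality up to that error for a forward run; a backtracking run only helps), while $\tau(e)\ge\Delta_e\phi$ on slow edges. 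Summing over all edges of $\Gamma:0\to x$ and telescoping, $\tau(\Gamma)\ge g_P(x)-o(|\Gamma|)-O_\varepsilon(1)$, the $O_\varepsilon(1)$ absorbing the bounded central ball and the edges $e_1,\dots,e_k$. As $|\Gamma|\le\tau(\Gamma)/\inf A$, this yields $T_\omega(0,x)\ge(1-\varepsilon)g_P(x)-O_\varepsilon(1)$, so for $R$ large $B_\omega(R)/R\subseteq(1+\varepsilon)P$, which is within $O(\varepsilon)$ of $K'$. Combining with the covering bound, $d_H(B_\omega(R)/R,K')=O(\varepsilon)$; choosing $\varepsilon$ small enough and then $R>m$ large enough places $\omega$ in $V\cap U_{j,m}$. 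This proves density, and with it the theorem.

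\textbf{The main obstacle.} The delicate point is the confinement step — preventing $B_\omega(t)/t$ from bulging past $K'$. The corridors must be cheap enough to reach $\partial K'$ in time $\approx t$, which forces their passage times below those demanded by the natural gauge potential of $K'$; a crude argument then only controls the ``loss'' a path extracts by hopping in and out of corridors by a constant per hop, which is useless since a path may hop linearly often in $t$. Convexity is used precisely to eliminate this loss: routing each corridor to the foot of perpendicular of a \emph{small} facet makes its direction parallel to the facet normal, so $g_P$ is not merely bounded but \emph{exactly monotone} along the corridor, and the potential inequality then survives with no hopping loss at all. The residual technicalities — realising the required average rates from $A$ by fine mixtures, handling a non–attained $\sup A$ via the outward perturbation, and disentangling the corridors near the origin while avoiding $e_1,\dots,e_k$ — are routine once this is in place.
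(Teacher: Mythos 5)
The inequality you rely on, $\sum_{e\in\sigma}\tau(e)\ge\sum_{e\in\sigma}\Delta_e\phi-o(|\sigma|)$ for \emph{every} maximal corridor sub-run $\sigma$, does not follow from your construction. On an $i$-step of the corridor the potential increment is $|u_m^{(i)}|$, while the passage time is an element of $A$ chosen only so that \emph{partial sums} track the average rate $1/|\bar p_m|_1$. Edgewise domination $\tau_e\ge\Delta_e\phi$ is in general impossible: $|u_m^{(i)}|$ may be smaller than $\inf A$ or lie in a gap of $A$, and since the average of the increments along the corridor is exactly $1/|\bar p_m|_1$, forcing $\tau_e\ge\Delta_e\phi$ on every edge would make the corridor strictly slower than $1/|\bar p_m|_1$ and destroy your covering step. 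So the most your construction gives (and only after sharpening ``error $o(\mathrm{length})$'' to a greedy $O(1)$ tracking) is that the discrepancy $D(s)=\langle u_m,y(s)\rangle-S(s)$ stays bounded; a sub-run entered at a low value of $D$ and left at a high value gains $\Theta(1)$ over the potential, and nothing in your argument bounds the number of such visits. A path of length $\Theta(R)$ may enter and leave corridors $\Theta(R)$ times, hitting precisely the cheap steep edges, and your telescoping then only yields $\tau(\Gamma)\ge g_P(x)-O(|\Gamma|)$, which is vacuous. The monotonicity of $g_P$ along the corridor — your announced cure for the hopping loss — controls only the sign of $\Delta_e\phi$, not its domination by $\tau_e$, which is what a loss-free telescoping needs. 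Whether the hopping gain can really be realized, or whether extra slack (a definite margin $b>\max_m\|u_m\|_\infty$ together with correlating the cheap/expensive mixture with the step directions) kills it, is exactly the heart of the theorem and is left unproved. (Your openness claim for $U_{j,m}$ is also stated too quickly, since sublevel sets $\{T\le t\}$ can jump when some $T(0,x)=t$ exactly, but that is routine to repair, e.g.\ by arguing, as the paper does, with nowhere density of the complementary sets.)

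For comparison, the paper avoids the multi-hop problem by a different geometry. Lemma 3.5 builds a continuum percolation metric whose fast set consists of finitely many straight spokes $[0,x_i]$ through the origin toward an $\varepsilon$-net of $\partial K$, with \emph{constant} rate $|x_i|^{-1}$ on each spoke and $\sup A$ everywhere else; with this structure an optimal path meets at most one spoke, so there is no bookkeeping over many corridor visits at all, and convexity of $K$ enters through a clean convex-combination argument showing the unit ball stays inside $K$. The passage from the non-admissible rate $|x_i|^{-1}$ to values of $A$ is then made by alternating $\inf A$ and $\sup A$ along each spoke, and the cases (i)--(iii) of Lemma 3.5 are exactly the (single-hop) discrepancy estimates your argument would need uniformly over linearly many hops; Theorem 3.3 finally transfers unit balls of such metrics to the rescaled balls $B(t)/t$ on a cylinder set, giving nowhere density of the exceptional sets. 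To salvage your approach you would have to either establish a genuine per-visit penalty for re-entering a corridor, or redesign the fast set so that competing paths can exploit it essentially only once — which is precisely what the paper's spoke construction accomplishes.
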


The other motivation for this paper was a question raised by Kornélia Héra after a talk given about the earlier results. Roughly, her question was what can be said if we consider $A\subseteq\mathbb{C}$ instead of $A\subseteq\mathbb{R}$ and we define appropriate absolute values as passage times. We will consider a more general setup which we will refer to as Hilbert first passage percolation or Hilbert percolation: we fix a real Hilbert space $\mathcal{H}$ and $A\subseteq{\mathcal{H}}$. The passage vector $v(e)$ of any edge will be an element of $A$, while the passage time of the edge is $\tau(e)=\|v(e)\|_{\mathcal{H}}$. The passage vector $v(\Gamma)$ of a path $\Gamma$ is defined as the sum of passage vectors of the contained edges, and the passage time $\tau(\Gamma)$ of the path is the norm of the passage vector of the path. The passage set between any two points $x,y\in\mathbb{Z}^d$ is the set $S(x,y)$ of passage vectors of paths connecting them. Finally, the passage time between any two points $x,y\in\mathbb{R}^d$ is
\begin{displaymath}
T(x,y)=\inf_{\Gamma}\tau(\Gamma),
\end{displaymath}
\noindent where the infimum is taken over all the paths connecting $x'$ to $y'$, where $x'$ and $y'$ are the unique lattice points such that $x\in x'+[0,1)^d$, $y\in y'+[0,1)^d$. In other words,
\begin{displaymath}
T(x,y)=\inf\{\|v\|_{\mathcal{H}}:v\in S(x,y)\}.
\end{displaymath}

We note here that various generalizations of the probabilistic setup also exist, originally motivated by the setup with non-i.i.d edge weights. In \cite{Bo}, the general stationary and ergodic case is considered, and an analogue of the Cox--Durrett theorem is proved for instance. For the most general version known see \cite{B}. As it is quite involved and not of our direct interest, we do not go into details.

The theory of Hilbert first passage percolation is obviously a generalization of the ordinary topological first passage percolation, as in the case $\mathcal{H}=\mathbb{R}$ and $A\subseteq{[0,\infty)}$ the new definition of passage times coincide with the old one. However, it is useful to note that if $A$ contains negative values, we face a slight ambiguity as we do not have this coincidence. Anyway, we should not worry about this phenomenon as the original topological model collapses in that case and does not deserve much attention.

When it comes to questions of residuality, it is usual to restrict ourselves to separable spaces for technical reasons. As a consequence, henceforth we assume that all the real Hilbert spaces involved are separable.

In Section 4 we will restrict further our scope to finite dimensional spaces, define strongly positively dependent sets, and prove that if $A$ is bounded and strongly positively dependent then the Hilbert first passage percolation gets trivial in some sense. We also consider the question how common are strongly positively dependent sets amongst the compact sets in terms of Baire category.

The purpose of Section 5 is to examine the geometric behaviour of the Hilbert percolation in the generic case. In this section  More explicitly, we will define optimal paths and geodesics in the Hilbert first passage percolation and focus on their their relationship. An optimal path is a geodesic if each of its subpaths is also an optimal path. There are two natural properties we will expect to get a quite tame geometric structure: for any $\Gamma'\subseteq \Gamma$ we have that $\tau(\Gamma')\leq \tau(\Gamma)$ and there is a geodesic between any pair of lattice points. If $d=1$, the latter property obviously holds if the former one does, hence we will assume $d\geq{2}$. Through Lemma 5.2 and Lemma 5.3 we will prove the following theorem displaying that if these properties are present in the generic case and $\Omega$ is a Baire space, that is residual subsets intersect any nontrivial open set, then the Hilbert first passage percolation we consider is equivalent to some ordinary topological first passage percolation. This phenomenon provides one more reason (beside of simplicity) supporting that we are primarily interested in the ordinary topological first passage percolation.
\begin{theorem}
Let $A\subseteq{\mathcal{H}}$ such that $\Omega$ is a Baire space and let $d\geq{2}$. Assume that in a residual subset of $\Omega$ for any $\Gamma'\subseteq \Gamma$ we have that $\tau(\Gamma')\leq \tau(\Gamma)$ and there is a geodesic between any pair of lattice points. Then $A$ is contained by a ray, that is it is linearly isomorphic to a subset of $[0,+\infty)$. \end{theorem}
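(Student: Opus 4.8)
The plan is to decouple the two hypotheses, which is presumably the division of labour between Lemma 5.2 and Lemma 5.3. First I would exploit monotonicity. For a fixed finite path $\Gamma$ in $\mathbb{Z}^d$ and a fixed subpath $\Gamma'\subseteq\Gamma$, the set $\{\omega\in\Omega:\tau_\omega(\Gamma')\le\tau_\omega(\Gamma)\}$ is closed, since $\omega\mapsto\omega(e)$ is continuous into $\mathcal{H}$ and finite sums and the norm are continuous. There are only countably many such pairs $(\Gamma,\Gamma')$, so the set on which the monotonicity property holds is a countable intersection of closed sets containing a residual set; as $\Omega$ is Baire, each of these closed sets is then comeager, hence dense, hence all of $\Omega$. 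Thus $\tau_\omega(\Gamma')\le\tau_\omega(\Gamma)$ for \emph{every} configuration $\omega$. Testing this on two consecutive edges carrying passage vectors $a,b\in A$ gives $\|a\|\le\|a+b\|$ for all $a,b\in A$, and testing it on a path carrying $n$ copies of $a$ followed by one copy of $b$ and letting $n\to\infty$ yields $\langle a,b\rangle\ge0$ for all $a,b\in A$; in particular $A$ contains no nonzero vector together with a negative multiple of it.

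Now suppose, toward a contradiction, that $A$ is not contained in a ray. Then $A$ contains two linearly independent vectors $a,c$ (otherwise $A$ lies in a line and, lacking oppositely oriented vectors, in a ray), and they satisfy $\langle a,c\rangle\ge0$ and $\|a+c\|<\|a\|+\|c\|$ strictly. Since $R:=R_1\cap R_2$ is comeager and $\Omega$ is Baire, $R$ meets every nonempty open set, so it suffices to produce a nonempty open $U\subseteq\Omega$ and a fixed pair of lattice points that admits no geodesic in any $\omega\in U$. Assume first $\delta:=\inf\{\|v\|:v\in A\}>0$. Because all pairwise inner products on $A$ are $\ge0$, any path with $L$ edges has $\tau\ge\sqrt{L}\,\delta$; hence, if we prescribe on all edges of a large box $[-M,M]^d$ passage vectors lying in suitable relatively open neighbourhoods in $A$ of $a$ and $c$, then for $M$ large every path between two fixed central lattice points that leaves the box is strictly more expensive than a fixed short prescribed path, hence non-optimal. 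Consequently the optimal paths between the chosen endpoints, and all their subpaths, depend only on the prescribed finite pattern.

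It then remains to design that finite $a/c$–pattern on a bounded piece of $\mathbb{Z}^d$, using $d\ge2$ to provide competing routes, so that \emph{every} optimal path between the endpoints contains a strictly non-optimal subpath and is therefore not a geodesic; the mechanism is that for linearly independent $a,c$ a ``mixed'' optimal path competes strictly with the ``pure'' ones in a way that conflicts with self-avoidance. This yields $U\cap R=\emptyset$, the desired contradiction, so $A$ must lie on a ray, and sending the generating unit vector $u$ to $1$ exhibits the claimed linear isomorphism onto a subset of $[0,+\infty)$. The main obstacle is exactly this gadget: an explicit bounded $a/c$–configuration that provably destroys every candidate geodesic between two fixed points, uniformly over all linearly independent pairs $a,c$ with $\langle a,c\rangle\ge0$ regardless of their relative lengths, and robustly under small perturbations so that it genuinely defines an open set of configurations. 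A secondary point to be treated separately is the case $\delta=0$ (that is $0\in\overline A$), where the box-localization fails but arbitrarily cheap long detours make single edges with large passage vector fail to be subpath-optimal, which should again be turned into a violation of the geodesic hypothesis.
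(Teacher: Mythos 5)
Your first half is correct and matches the paper's route: the paper's Lemma 5.2 shows that monotonicity of $\tau$ under taking subpaths (in every configuration) is equivalent to $(a,b)\geq 0$ for all $a,b\in A$, and the remark following it upgrades the residual hypothesis to ``everywhere'' exactly as you do via Baireness; your closed-set/denseness phrasing and the $n$ copies of $a$ plus one $b$ computation are the same argument. Your overall strategy for the second half is also the paper's: if $A$ is not contained in a ray, exhibit a nonempty open set of configurations in which some fixed pair of lattice points admits no geodesic, and contradict residuality via Baireness.

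The genuine gap is the one you yourself flag as ``the main obstacle'': the explicit configuration in which no geodesic exists between two fixed points. This is precisely the content of the paper's Lemma 5.3, and it is the bulk of the work, not a routine gadget. The paper splits it into the case $\|a\|_{\mathcal{H}}=\|b\|_{\mathcal{H}}$ (a small two-dimensional picture where the unique optimal path has a non-optimal initial segment) and the much harder case $\|a\|_{\mathcal{H}}>\|b\|_{\mathcal{H}}$, where one chooses even integers $n_a<n_b$ with $n_b\|b\|_{\mathcal{H}}<n_a\|a\|_{\mathcal{H}}\leq n_b(\|b\|_{\mathcal{H}}+\varepsilon)$, lays a segment of $a$'s in competition with a longer detour of $b$'s, and then fills the remaining edges with $a$ or $b$ according to which side of the orthogonal projection of $0$ onto the line $\{ta+((3n_b+n_a)-t)b\}$ the point $t=n_a$ lies; the delicate part is then showing that \emph{every} alternative optimal path either is strictly more expensive or itself contains a non-optimal subpath, which needs a quantitative estimate (the paper's inequality (25) with $\varepsilon=\|b\|_{\mathcal{H}}/M$) and genuinely uses the relative sizes of $\|a\|_{\mathcal{H}},\|b\|_{\mathcal{H}}$ and the bound $(a,b)\leq\mu\|a\|_{\mathcal{H}}\|b\|_{\mathcal{H}}$. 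Your one-sentence ``mixed paths compete with pure ones'' does not substitute for this case analysis, and it is not obvious how to make it uniform ``regardless of relative lengths'' without something like the paper's construction. A secondary loose end is your localization: the paper does not localize via a lower bound $\tau\geq\sqrt{L}\,\delta$ (which indeed fails when $\inf\{\|v\|:v\in A\}=0$), but instead builds a full configuration on $\mathbb{Z}^d$ and passes to small neighbourhoods of it; your $\delta=0$ branch is only a hope (``should again be turned into a violation'') and would need an actual argument. So the skeleton is right, but the theorem's proof in the paper lives almost entirely in the part you have postponed.
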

We note that if for example $A\subseteq\mathcal{H}$ is $G_{\delta}$, then $\Omega$ is a Baire space, hence the above theorem holds in quite natural cases. Indeed, $\mathcal{H}$ is obviously Polish as $\mathcal{H}$ is separable, hence $A$ is also Polish due to Alexandrov's theorem. Consequently, $\Omega$ is also Polish, as a countable product of Polish spaces. Thus $\Omega$ is Baire due to Baire category theorem. (For details, see \cite{Ku} for instance.) We also emphasize that it is rather usual to consider Baire spaces exclusively when one studies Baire category as otherwise residual subsets fail to grasp the concept of being large. However, our other results are correct anyway, it is the reason why we did not make this assumption previously.

In the ordinary topological first passage percolation we proved that in the generic configuration for any point $x$ there exists a geodesic ray such that $x$ is its starting point, and the proof was based on the fact that there exists a finite geodesic between any pair of lattice points in the generic configuration. Given the previous theorem, it is natural to ask how many distinct geodesic rays may exist generically in the Hilbert percolation, and whether it is possible that there are no geodesic rays at all. The following theorem which is proven in Section 6 is naturally analogous to Theorem 1.1:
\begin{theorem}
Assume $d\geq{2}$. If $A\subseteq\mathcal{H}$ is nontrivial, that is it has cardinality larger than 1, then in a residual subset of $\Omega$ there exists at most one geodesic ray.
\label{hilbertrays}
\end{theorem}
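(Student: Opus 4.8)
The plan is to mimic the strategy used for Theorem 1.1 in the ordinary topological setting, adapted to the Hilbert percolation. Suppose, for contradiction, that the set of configurations in which there exist (at least) two distinct geodesic rays is second category (not meagre). Recall that two geodesic rays are distinct if they share only finitely many edges. By an elementary argument, if there are two distinct geodesic rays starting anywhere, then by pruning the initial finite portions and using that a geodesic ray passes through the lattice, we may arrange to exhibit two geodesic rays $R_1,R_2$ that are \emph{disjoint after some bounded stage}; the key feature we want to extract is the existence of two disjoint long optimal-path segments that depart from a common bounded region and diverge. I would first localize this to a fixed pair of edge-configurations on a large but finite box, i.e.\ pass to a non-meagre set on which a fixed finite box has a fixed passage-vector assignment and in which the two rays already split apart within that box.

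Next I would perform a \emph{perturbation / rewiring} on the edges outside the box (or along a carefully chosen cut) that destroys one of the two rays without creating a comparably good competitor, thereby showing the ``two distinct geodesic rays'' property fails on an open subset of the non-meagre set we reduced to — contradicting second category. Concretely, in the ordinary model one increases the passage times on a whole ``layer'' of edges crossing one of the rays so that the ray becomes strictly suboptimal; because $A$ has at least two elements one can always nudge a passage time to a strictly larger value, and the product topology means this is an open condition. In the Hilbert setting the subtlety is that $\tau(e)=\|v(e)\|_{\mathcal H}$ is not monotone in the coordinates of $v(e)$, and cancellations between passage vectors along a path can make a ``longer'' detour have \emph{smaller} passage time. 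So the rewiring must be done at the level of passage \emph{vectors}: pick an element $a\in A$ and modify the vectors along a finite cut separating one ray from the origin (after its split-off point) by replacing each with a fixed $a$, chosen so that the accumulated vector along any path crossing the cut many times has large norm. One must be slightly careful: a single crossing edge only contributes $\|a\|$, which might be small, so instead of a single cut one needs a thick \emph{annular} family of $N$ nested cuts, forcing any competing path to accumulate either many copies of $a$ (hence, since all these copies point in the same direction $a$, a vector of norm $\ge N\|a\|$ if $a\neq 0$, or else one uses two distinct elements of $A$ and a parity/alternation trick when $0\in A$) or to route through the box, which we have fixed.

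The key steps, in order: (1) assume the negation and reduce, via a Baire-category localization, to a non-meagre set $\mathcal G\subseteq\Omega$ on which a large finite box $Q$ carries a fixed passage-vector configuration, and on which two distinct geodesic rays exist whose split-off point lies inside $Q$; (2) isolate one of the two rays, say $R_2$, and the infinite ``half'' of it outside $Q$; (3) construct, using finitely many nested cuts outside $Q$ and a fixed choice of one or two elements of $A$, an \emph{open} (in the product topology) perturbation of the edges outside $Q$ that forces every path from $Q$ to infinity following (a tail of) $R_2$ to have passage time strictly exceeding that of the corresponding path following (a tail of) $R_1$, using that $T(x,y)$ only sees passage \emph{vectors} and that parallel copies of a fixed nonzero vector do not cancel; (4) conclude that on a nonempty open subset of $\mathcal G$ there is no geodesic ray distinct from $R_1$, so the ``at least two distinct geodesic rays'' set is meagre, the desired contradiction; finally (5) since $\Omega$ need not be assumed Baire here, phrase the conclusion as ``the set of configurations with $\le 1$ geodesic ray is residual,'' which is the statement of the theorem.

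The main obstacle is step (3): the absence of monotonicity of $\tau$ in the Hilbert model. In the real model, increasing weights on a cut is unambiguously bad for any crossing path; here a detour can have smaller norm. The resolution — forcing many \emph{aligned} (hence non-cancelling) copies of a single vector $a\in A\setminus\{0\}$, and separately handling the case $0\in A$ with $|A|\ge 2$ by alternating two distinct elements across nested cuts so that partial sums still grow — is where the real work lies, and it is also where the hypothesis $d\ge 2$ (so that cuts separating a point from infinity are genuinely ``thick'' and a ray must cross each of them) is used, exactly as in the proof of Theorem 1.1.
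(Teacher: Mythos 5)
Your overall frame (reduce to a local statement, kill the ``two distinct rays'' sets one cylinder at a time) is the right general template, but the proposal has two genuine gaps, one logical and one mathematical. Logically, you cannot ``pass to a non-meagre set on which a fixed finite box has a fixed passage-vector assignment'': in the product topology over an arbitrary $A$ a prescribed exact assignment on even one edge is typically nowhere dense, and exhibiting an open set on which the two-ray property fails does not by itself contradict second category (that would need the Baire property and a Baire-space assumption that the theorem deliberately avoids). The correct route, and the paper's, is direct: write $F(0)=\bigcup_m F_m(0)$, where $F_m(0)$ consists of configurations with two geodesic rays from the origin sharing at most $m$ edges, and show each $F_m(0)$ is nowhere dense by producing, inside \emph{every} cylinder $U$, a sub-cylinder $V$ disjoint from it. This forces you to control \emph{all} configurations in $V$, not just to destroy one previously chosen ray $R_2$: even if your cut-perturbation makes a tail of $R_2$ suboptimal, nothing in your step (4) prevents the perturbed configurations from having some other pair of distinct geodesic rays, so nowhere-density of $F_m(0)$ is not obtained. (There is also the unaddressed problem that any finite ``cut'' or annulus separating the tail of $R_2$ from the origin will generally also cross $R_1$, which likewise goes to infinity.)

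Mathematically, your step (3) attacks the wrong half of the difficulty. The paper does not try to make a chosen ray expensive; it forces \emph{every} geodesic between the boundaries of two skew boxes $K_1,K_2$ through a single corridor $\partial K_1\cup[p\xi_1,q\xi_1]\cup\partial K_2$, which makes any two geodesic rays from the origin share more than $m$ edges. The hard point in the Hilbert setting is making that corridor \emph{cheap per unit $\ell_1$-length} when $A$ need not contain short vectors and is not contained in a ray. The paper's solution: pick non-aligned $a,b\in A$ with $(a,b)\le\mu\|b\|_{\mathcal H}^2$, $0<\mu<1$, assign passage vectors alternating between $a$ and $b$ along a breadth-first search tree of the corridor graph rooted at $p\xi_1$ (so that every corridor-optimal path alternates), which yields $\tau(\Gamma')\le\lambda|\Gamma'|\,\|b\|_{\mathcal H}$ with $\lambda=\sqrt{\tfrac34+\tfrac14\mu}<1$, while all other edges of $K_2\setminus\inte K_1$ get the aligned vector $b$, so a path avoiding the corridor has passage time exactly its length times $\|b\|_{\mathcal H}$; replacing the corridor vectors by $\lambda b$ then reduces the comparison to the real-valued geometry of Theorem 1.1. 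Your proposal contains neither the corridor-cheapness mechanism (the alternation and the inner-product bound) nor the reduction to the real case, and the ``many aligned copies across nested cuts'' idea addresses cancellation only where it is not the issue. So as it stands the proposal does not go through.
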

At the end of this section we also provide an example in which there are no geodesic rays at all generically.

In Section 7 we formulate a few open questions motivated by this paper.

\section{Uniqueness of the geodesic ray}

We declare at this point how we will think about the topology on $\Omega$. The most convenient way for us is to consider cylinder sets as the basis of the topology, that is the basis sets are of the form
\begin{displaymath}
U=\times_{e\in E}U_e,
\end{displaymath}
\noindent where each $U_e$ is open in $A$ and with at most finitely many exceptions $U_e=A$. We say that $U_e$ is the projection of $U$ to the edge $e$.

We use the notation $|x|$ for the $\ell_1$-norm of $x\in\mathbb{R}^d$ throughout the paper.

\begin{proof}[Proof of Theorem 1.1] The outline of the proof is similar to the ones of its weaker counterparts, however, it relies on a more elaborate geometric construction. On the other hand, we  somewhat simplified technical details in general.

As earlier, first we will prove that if $x$ is a fixed lattice point then apart from a meager subset of $\Omega$ there is no more than one geodesic ray starting from $x$. Clearly it suffices to prove this claim for $x=0$. Let $F(0)$ denote the set of configurations in which there are at least two distinct geodesic rays starting from the origin. Then $F(0)=\bigcup_{m=1}^{\infty}F_m(0)$ where $F_m(0)$ stands for the set of configurations in which there are at least two distinct geodesics starting from the origin such that they have at most $m$ edges in common. We claim that for any $m$ we have that $F_m(0)$ is a nowhere dense set in $\Omega$, which would verify our preliminary statement about the meagerness of $F(0)$.

Fix $U$ to be a cylinder set, and denote the set of edges belonging to nontrivial projections of $U$ by $E_U=\{e_1, e_2, ..., e_N\}$. We can simply construct a smaller cylinder set $U'$ by shrinking the projections $U_{e_1}, ..., U_{e_N}$, such that all of these projections are bounded in $\mathbb{R}$. Then for any configuration in $U'$, the sum of passage times over the edges $e_1,...,e_N$ is bounded by a constant $C$. The novelty appears at this point: instead of considering concentric hypercubes centered at the origin, we take a skew construction. More precisely, let $K_1=[-p,p]^d$ and $K_2=[-q',q]\times[-r,r]^{d-1}$ where $p,q,q',r\in\mathbb{N}$ and $p$ is chosen such that the edges in $E_U$ are in the interior of $K_1$. The values $q<r<q'$ are to be fixed later. Let us denote the set of edges in $K_2$ which are not in the interior of $K_1$ by $E^*$. We will define $V\subseteq{U'}$ as a cylinder set which has nontrivial projections to the edges in $E_U\cup E^*$. The underlying concept is borrowed from the proof given for the case $\sup A < 5 \inf A$: for the configurations in $V$ we would like to have essentially one (and the same) geodesic from the boundary $\partial K_1$ to the boundary $\partial K_2$, notably the line segment connecting $p\xi_1$ and $q\xi_1$ (in general $\xi_i$ denotes the $i$th coordinate vector). By this we mean that for any lattice points $x_1\in \partial K_1$ and $x_2 \in\partial K_2$, a geodesic $\Gamma$ from $x_1$ to $x_2$ eventually arrives in $p\xi_1$, and then it goes along the line segment $[p\xi_1,q\xi_1]$. It would be sufficient: any geodesic ray starting from the origin eventually leaves $K_1$ and $K_2$, and a geodesic ray is a geodesic between any two of its points, the previous properties would guarantee that any geodesic ray starting from the origin would go along the line segment $[p\xi_1,q\xi_1]$. However, that would mean that our configuration cannot be in $F_m(0)$ for $q-p>m$ as there would not exist at least two distinct geodesics starting from the origin such that they have at most $m$ edges in common.

Let us make the above argument rigorous. Fix $a<b$ in $A$. Moreover, fix $\varepsilon>0$ and $\lambda>1$ such that $(a+\varepsilon)\lambda<b-\varepsilon$ holds. Finally, for later usage define a small value $\varepsilon_e$ for each edge $e\in{E}$ such that $\sum_{e\in{E}}\varepsilon_e<\varepsilon$.  We will have small passage times on the edges of $\partial K_1$, $\partial K_2$, and along the line segment $[p\xi_1,q\xi_1]$ to guarantee a path with considerably low passage time between any two points of $\partial K_1$ and $\partial K_2$. We call these edges cheap. Meanwhile on other edges between the two boundaries (e.g. the expensive edges) we would like to have considerably larger passage times. Thus for every cheap edge $e$ we define the relatively open set 
\begin{displaymath}
V_e=(a-\varepsilon_e, a + \varepsilon_e ) \cap A,
\end{displaymath}
and for any expensive edge we define
\begin{displaymath}
V_e=(b-\varepsilon_e, b+ \varepsilon_e ) \cap A.
\end{displaymath}
By this, we have defined $V$. Now consider any configuration in $V$. For technical convenience we will prove the following claim, which is formally stronger than what we stated before: if $x_1\in\partial K_1 \cup [p\xi_1,(q-1)\xi_1]$, while $x_2\in [(p+1)\xi_1,q\xi_1] \partial K_2$, then there is no geodesic from $x_1$ to $x_2$ which uses expensive edges. Proceeding towards a contradiction, assume the existence of $x_1,x_2$, and a geodesic $\Gamma$ which refutes this claim. As any subpath of a geodesic is also a geodesic, we can assume that $\Gamma$ uses expensive edges only by passing to a suitable subpath. Hence the passage time of $\Gamma$ can be estimated from below by
\begin{displaymath}
\tau(\Gamma)\geq |x_2-x_1|b-\varepsilon\geq|x_2-x_1|(b-\varepsilon).
\end{displaymath}
We will arrive at a contradiction by constructing a cheaper $\Gamma'$ from $x_1$ to $x_2$ which does not use expensive edges,  In the following we will separate cases based on the position of $x_2$. The figure below displays how we will construct $\Gamma'$ with the desired properties in one of the cases and it also helps understanding the other constructions.

\begin{figure}[h!]
  \includegraphics[width=300pt]{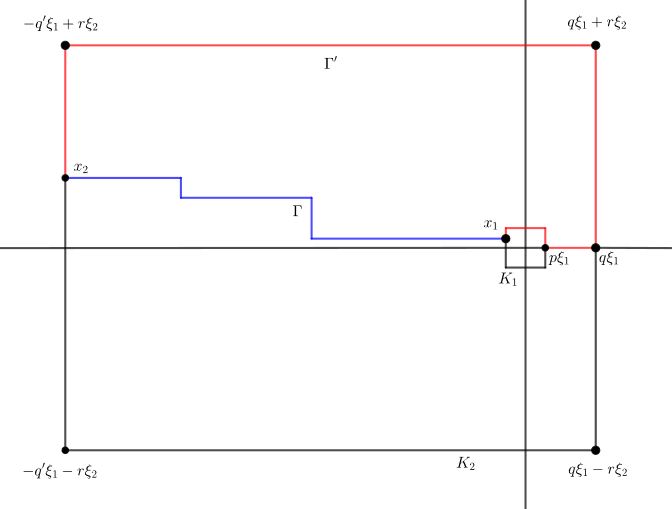}
  \caption{The case when $x_2\in\partial K_2$ is on the facet containing $-q'\xi_2$ and $d=2$.}
  \label{geodesics}
\end{figure}

If $x_2\in[p\xi_1,q\xi_1]$, we have a simple task. Indeed, in this case $\Gamma$ can be replaced by a path $\Gamma'$ which is not longer in $\ell_1$, and instead of using expensive edges only, it uses cheap edges exlusively. Thus $\tau(\Gamma')<\tau(\Gamma)$ clearly holds.

Now assume that $x_2$ is on the same facet of $\partial K_2$ as $q\xi_1$. We separate two subcases:
\begin{itemize}
\item $x_1\in[p\xi_1,(q-1)\xi_1]$. In this case there exists an $\ell_1$-optimal path from $x_1$ to $x_2$ which consists of cheap edges exclusively. Choose such a path to be $\Gamma'$, it contains $|x_2-x_1|$ edges. The passage time of $\Gamma'$ can be estimated from above by
\begin{displaymath}
\tau(\Gamma')\leq |x_2-x_1|a+\varepsilon\leq|x_2-x_1|(a+\varepsilon).
\end{displaymath}
Comparing the bounds gives
\begin{displaymath}
|x_2-x_1|(b-\varepsilon) \leq |x_2-x_1|(a+\varepsilon).
\end{displaymath}
However, as $a+\varepsilon<b-\varepsilon$, it is clearly impossible, thus we have handled this case.
\item $x_1\in\partial K_1$. In this case we define $\Gamma'$ by joining together two shorter paths $\Gamma_1$ and $\Gamma_2$. The path $\Gamma_1$ will run on $\partial K_1$ from $x_1$ to $p\xi_1$ such that it uses as few edges as it is possible. Consequently, $|\Gamma_1|\leq 2dp$. Meanwhile the path $\Gamma_2$ will go from $p\xi_1$ to $x_2$ such that it is $\ell_1$-optimal and uses cheap edges exclusively. (By the choice of the facet containing $x_2$ it is clearly possible.) Now by the triangle inequality we have
\begin{displaymath}
|\Gamma_2|\leq|x_2-x_1|+2dp.
\end{displaymath}
Using the estimate for the number of edges in $\Gamma_1$ and $\Gamma_2$ we can obtain an upper bound for the passage time of $\Gamma'$:
\begin{displaymath}
\tau(\Gamma')\leq(|x_2-x_1|+4dp)a+\varepsilon\leq(|x_2-x_1|+4dp)(a+\varepsilon).
\end{displaymath}
Comparing the bounds gives
\begin{displaymath}
|x_2-x_1|(b-\varepsilon)\leq(|x_2-x_1|+4dp)(a+\varepsilon).
\end{displaymath}
As $(a+\varepsilon)\lambda<b-\varepsilon$ now we can obtain after division
\begin{displaymath}
\lambda|x_2-x_1|\leq|x_2-x_1|+4dp,
\end{displaymath}
or equivalently,
\begin{displaymath}
|x_2-x_1|\leq\frac{4dp}{\lambda-1}.
\end{displaymath}
However, $|x_2-x_1|\geq q-p$ necessarily holds. Hence if $q$ is chosen to be sufficiently large compared to $p$ we get a contradiction, which concludes this case.
\end{itemize}

Assume now that $x_2$ lies on a facet of $K_2$ neighboring the one containing $q\xi_1$. In this case we construct $\Gamma'$ by joining at most three paths $\Gamma_1,\Gamma_2,\Gamma_3$: if $x_1\in\partial K_1$ we define $\Gamma_1$ in order to reach $p\xi_1$ as in the second subcase of the previous case. Next we use $\Gamma_2$ to reach $q\xi_1$ using the edges of $[p\xi_1,q\xi_1]$. Finally, we define $\Gamma_3$ to reach $x_2$ so that it is optimal in $\ell_1$ and uses only the edges of $\partial K_2$. The first two parts use at most $2dp+q-p$ edges, while we can get a simple upper estimate for the length of $\Gamma_3$ using the triangle inequality, notably
\begin{displaymath}
|\Gamma_3|\leq|x_2-x_1|+2dp+q-p.
\end{displaymath}
Consequently, we have
\begin{displaymath}
|\Gamma'|\leq|x_2-x_1|+4dp+2q-2p.
\end{displaymath}
As all these edges are cheap, we deduce the following bound:
\begin{displaymath}
\tau(\Gamma')\leq(|x_2-x_1|+4dp+2q-2p)a+\varepsilon\leq(|x_2-x_1|+4dp+2q-2p)(a+\varepsilon).
\end{displaymath}
Comparing to the lower bound given for $\tau(\Gamma)$ we gain
\begin{displaymath}
|x_2-x_1|(b-\varepsilon)\leq(|x_2-x_1|+4dp+2q-2p)(a+\varepsilon).
\end{displaymath}
Given the ratio bound on $a+\varepsilon$ and $b-\varepsilon$ it yields
\begin{displaymath}
\lambda|x_2-x_1|\leq|x_2-x_1|+4dp+2q-2p.
\end{displaymath}
Thus, simple rearrangement yields
\begin{displaymath}
|x_2-x_1|\leq\frac{4dp+2q-2p}{\lambda-1}.
\end{displaymath}
The right hand side expression is already fixed, while for the left hand side we have $|x_2-x_1|\geq r-q$. Thus if $r$ is chosen so that it is sufficiently large compared to the already fixed $q$, then we get a contradiction, which concludes this case.

The final case to consider is when $x_2$ is on the same facet of $K_2$ as $-q'\xi_1$. In this case we define $\Gamma'$ as the union of at most four shorter paths $\Gamma_1,\Gamma_2,\Gamma_3,\Gamma_4$. Concerning $\Gamma_1$ and $\Gamma_2$ we resort to the previous case in order to get to $q\xi_1$ from $x_1$, using at most $2dp+q-p$ cheap edges. Then we define $\Gamma_3$ to be the line segment $[q\xi_1,q\xi_1+r\xi_2]$, thus we reach a facet neighboring to the one containing $x_2$ using $r+2dp+q-p$ cheap edges. Finally we define $\Gamma_4$ to reach $x_2$ so that it is optimal in $\ell_1$ and uses only the edges of $\partial K_2$. By the triangle inequality we have
\begin{displaymath}
|\Gamma_4|\leq|x_2-x_1|+r+2dp+q-p,
\end{displaymath}
and hence
\begin{displaymath}
|\Gamma'|\leq|x_2-x_1|+2r+4dp+2q-2p.
\end{displaymath}
As all these edges are cheap, we deduce the following bound:
\begin{displaymath}
\tau(\Gamma')\leq(|x_2-x_1|+2r+4dp+2q-2p)a+\varepsilon\leq(|x_2-x_1|+2r+4dp+2q-2p)(a+\varepsilon).
\end{displaymath}
Comparing to the lower bound given for $\tau(\Gamma)$ we gain
\begin{displaymath}
|x_2-x_1|(b-\varepsilon)\leq(|x_2-x_1|+2r+4dp+2q-2p)(a+\varepsilon).
\end{displaymath}
By the ratio bound on $a+\varepsilon$ and $b-\varepsilon$ it yields
\begin{displaymath}
\lambda|x_2-x_1|\leq|x_2-x_1|+2r+4dp+2q-2p.
\end{displaymath}
Simple rearrangement yields
\begin{displaymath}
|x_2-x_1|\leq\frac{2r+4dp+2q-2p}{\lambda-1}.
\end{displaymath}
The right hand side expression is already fixed, while for the left hand side we have $|x_2-x_1|\geq q'-p$. Thus if $q'$ is chosen so that it is sufficiently large compared to $r$, then we get a contradiction, which concludes this case, and also the proof of the fact that $F(0)$ is meager.

The final step of the proof does not differ at all from the final step of the proof given for the case $5\inf A < \sup A$. Namely, let $F\subseteq{\Omega}$ be the set of configurations in which there are at least two geodesic rays. Moreover, let $F(x)$ be the set of configurations in which there are at least two distinct geodesic rays with starting point $x$, and $F_m$ be the set of configurations in which there exist two disjoint geodesic rays with starting point in the cube $[-m,m]^d$. Then 
\begin{displaymath}
F=\left(\bigcup_{x\in\mathbb{Z}^d}F(x)\right)\cup\left(\bigcup_{m=1}^{\infty}F_m\right)
\end{displaymath}
\noindent holds: if there exist at least two geodesic rays they are either disjoint or have a common point $x$, and in the latter case we have two geodesic rays starting from $x$ if we forget about the initial parts of these geodesics. Furthermore, we know that each of the sets $F(x)$ are meager by our argument up to this step. Thus if we could obtain that each $F_m$ is nowhere dense, that would conclude the proof. However, having seen the proof of the first part we do not have a difficult task as we can basically repeat that argument. Indeed, in that proof we showed that for a given cylinder set $U$ one can construct boxes $K_1,K_2$ and another cylinder set $V\subseteq{U}$ such that for configurations in $V$ any geodesic from $\partial K_1$ to $\partial K_2$ goes along the line segment $[p\xi_1,q\xi_1]$. Thus if we choose $p>m$ during the construction we will obtain that none of the configurations in $F_m$ can appear in $V$ as in $V$ there cannot be two disjoint geodesic rays starting from $[-m,m]^d$, given they all meet in $p\xi_1$. Thus $F_m$ is nowhere dense indeed, which concludes the proof of the theorem. \end{proof}

As we already pointed out in the introduction, geodesic rays starting from any two distinct points meet after a finite number of edges, which is a rather interesting, extraordinary behaviour. It is useful to think a bit about how we should imagine the unique geodesic ray then, what it looks like. It is tempting to imagine a picture in which it has some asymptotic direction. However, using similar techniques to the one seen in the proof, one can easily verify that generically the unique geodesic ray intersects any path of infinite length infinitely many times. Consequently, it is more appropriate to think about it as an infinite path which looks somewhat spiralic in the long run.

We note that in each case considered in the proof, in the estimates we only needed that the $\Gamma'$ we construct is optimal in $\ell_1$ between its endpoints amongst paths contained by $\partial K_1 \cup [p\xi_1,q\xi_1]\cup\partial K_2$, while $\Gamma$ does not use any edge contained by this set. We will refer back to this remark in the proof of Theorem \ref{hilbertrays}.

\section{The asymptotic behaviour of $\frac{B(t)}{t}$}

In the following we will assume that $A$ is bounded away both from $0$ and $+\infty$ since the other cases are covered by Proposition 1.2 as the convex sets of $\mathcal{K}_A^d$ are in $\mathcal{P}_A^d$. What we gain by this assumption is that we circumvent certain technical difficulties, however, we note that some of the definitions and results could be generalized to these extremes.

We start our investigations by introducing two families of metrics in $\mathbb{R}^d$:

\begin{defi}
Let $f$ be a nonnegative, measurable function. The pseudometric $d_{f,\ell_1}$ induced by $f$ is defined by
\begin{displaymath}
d_{f,\ell_1}(x,y)=\inf_{\Gamma:x\to y} \int_{\Gamma}f(t)\mathrm{d}s,
\end{displaymath}
where the arc length is considered in $\ell_1$, and we consider piecewise linear topological paths with finitely many pieces upon taking infimum.
\end{defi}

\begin{defi}
We call a pseudometric $\rho$ on $\mathbb{R}^d$ a percolation metric with support $A$ if there exists a measurable function $f:\mathbb{R}^d\to\overline{A}$ so that $\rho=d_{f,\ell_1}$.
\end{defi}

In the sequel we omit $\ell_1$ from the subscript as we are only concerned with $\ell_1$ based metrics, and unless it may cause ambiguity we will not write out the suffix "with support $A$" either. The family of sets arising as closed unit balls of percolation metrics, centered at the origin, will be denoted by $\mathcal{W}_A^d$. As $A$ is bounded away from both $0$ and $+\infty$, the elements of $\mathcal{W}_A^d$ are compact sets, each of them is the closure of its interior. Moreover, each percolation metric with support $A$ is a proper metric indeed. The closure of $\mathcal{W}_A^d$ in $\mathcal{K}_A^d$ is denoted by $\overline{\mathcal{W}_A^d}$.

The following theorem is simple to prove and displays how percolation metrics are related to the limit of sequences of the type $\frac{B(t_n)}{t_n}$:

\begin{theorem}
Assume that $\frac{B(t_n)}{t_n}\to K$ in the Hausdorff metric in some configuration for a sequence $t_n$ diverging to $+\infty$. Then $K\in \overline{\mathcal{W}_A^d}$.
\end{theorem}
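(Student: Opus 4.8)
The plan is to extract, from a configuration in which $\frac{B(t_n)}{t_n}\to K$, a sequence of rescaled percolation metrics and show that the rescaled passage-time pseudometrics converge (along a subsequence) to a percolation metric whose unit ball is $K$; since $\overline{\mathcal{W}_A^d}$ is closed in $\mathcal{K}_A^d$, this yields $K\in\overline{\mathcal{W}_A^d}$. More concretely, fix the configuration $\omega$ with $\frac{B(t_n)}{t_n}\to K$. For each $n$ define the rescaled pseudometric $\rho_n(x,y)=\frac{1}{t_n}T_\omega(t_n x,t_n y)$ on $\mathbb{R}^d$. Because $A$ is bounded away from $0$ and $+\infty$, say $A\subseteq[c,C]$ with $0<c\le C<\infty$, each $\rho_n$ is bi-Lipschitz equivalent to $|\cdot|$ with constants independent of $n$: indeed $c\,|x-y|\lesssim\rho_n(x,y)\le C\,|x-y|$ up to an additive $O(1/t_n)$ coming from the lattice rounding. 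Hence the family $\{\rho_n\}$ is uniformly Lipschitz and, by Arzelà–Ascoli applied on compact subsets of $\mathbb{R}^d\times\mathbb{R}^d$ together with a diagonal argument, some subsequence converges locally uniformly to a pseudometric $\rho$ on $\mathbb{R}^d$ which is itself bi-Lipschitz to $|\cdot|$.

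The next step is to identify the closed unit ball of $\rho$ with $K$. The unit ball of $\rho_n$ is, up to the $O(1/t_n)$ rounding error, exactly $\frac{B(t_n)}{t_n}$ (a point $x$ lies in it iff $T_\omega(0,t_n x)\le t_n$, i.e. $t_n x\in B(t_n)$). Uniform convergence of $\rho_n\to\rho$ on a neighbourhood of the (uniformly bounded) unit balls, combined with the uniform non-degeneracy lower bound $\rho_n\ge c\,|\cdot| - O(1/t_n)$ which prevents the balls from "thickening" in the limit, gives that the unit balls converge in the Hausdorff metric to the closed unit ball of $\rho$. Since they also converge to $K$ by hypothesis, the closed unit ball of $\rho$ equals $K$; in particular $K$ is compact, connected, and sandwiched between $D_{1/C}$ and $D_{1/c}$, so $K\in\mathcal{K}_A^d$.

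It remains to realise $\rho$ as a percolation metric, i.e. to produce a measurable $f:\mathbb{R}^d\to\overline{A}$ with $\rho=d_{f,\ell_1}$. This is the step I expect to be the main obstacle, because a general limit pseudometric need not a priori be of integral "length-space" type with the prescribed local speed values in $\overline{A}$. The route I would take: first observe $\rho$ is a geodesic (length) metric, being a locally uniform limit of the length pseudometrics $\rho_n$ (each $\rho_n$ is a lattice graph metric, hence a length metric, and the Lipschitz bounds let one pass to the limit via a standard Arzelà–Ascoli argument on geodesic segments). A length metric bi-Lipschitz to Euclidean on $\mathbb{R}^d$ arises from a Finsler-type or even just a measurable "conformal factor" structure; the content one must check is that the relevant infinitesimal speed takes values in $\overline{A}$. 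Here one uses that each $\rho_n$ is built from edge weights in $A$: for a fixed direction and a fixed scale, a near-optimal $\rho_n$-path along a coordinate direction through a region has average weight in $[\inf A,\sup A]$, and a blow-up/differentiation argument (Lebesgue density of the averaged weights) forces the limiting local speed into $\overline{A}$. Packaging this: define $f(x)$ via the $\rho$-length of coordinate segments, $f(x)=\lim_{\delta\to 0}\frac{\rho(x,x+\delta\xi_1)}{\delta}$ where this exists a.e., extend measurably, and verify $d_{f,\ell_1}=\rho$ using that both are geodesic metrics agreeing infinitesimally; the inclusion $f\in\overline{A}$ a.e. follows from the averaging argument above. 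Once $\rho$ is exhibited as a percolation metric with support $A$, its unit ball $K$ lies in $\mathcal{W}_A^d\subseteq\overline{\mathcal{W}_A^d}$, completing the proof. (If the cleaner route is available — namely that $\mathcal{W}_A^d$ is already known or easily shown to be \emph{closed} under Hausdorff limits of rescaled balls — one can shortcut the last paragraph, but I would expect to need the length-metric/differentiation argument to pin down the support condition.)
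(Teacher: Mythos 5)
Your first two steps (Arzel\`a--Ascoli compactness for the rescaled pseudometrics $\rho_n(x,y)=\frac{1}{t_n}T_\omega(t_nx,t_ny)$ and the identification of the limit unit ball with $K$) are fine, but the final step, which is the heart of your argument, contains a genuine gap: the limit pseudometric $\rho$ need \emph{not} be a percolation metric at all, so trying to exhibit it as $d_{f,\ell_1}$ cannot work in general. Concretely, take a configuration in which, throughout a region that is macroscopic at scale $t_n$, all horizontal edges carry passage time $a$ and all vertical edges carry $b$ with $a<b$, both in $A$. Then locally $\rho(x,y)=a|x_1-y_1|+b|x_2-y_2|$, and no measurable scalar $f$ with values in $\overline{A}$ induces this metric: forcing the vertical speed to be $b$ requires $f$ to be essentially $\geq b$ on a full neighbourhood (otherwise a vertical path could make a small horizontal excursion into a cheap set), which in turn forces the horizontal speed up to $b$ as well. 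In particular your recipe $f(x)=\lim_{\delta\to0}\rho(x,x+\delta\xi_1)/\delta$ picks up only the $\xi_1$-speed, and $d_{f,\ell_1}$ then disagrees with $\rho$ in the $\xi_2$-direction; the claim that the two geodesic metrics ``agree infinitesimally'' is false precisely because $\rho$ can be anisotropic while every $d_{f,\ell_1}$ is not. Your parenthetical fallback (that $\mathcal{W}_A^d$ is closed under such limits) is essentially the statement being proved, so it cannot be invoked.

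The fix is to abandon the limit metric entirely and work at each finite $n$, which is what the paper does: define $f_n$ to equal the (rescaled) edge passage times on the edges of the graph $\frac{\mathbb{Z}^d}{t_n}$ and $\sup A$ off the graph. Then $f_n$ takes values in $\overline{A}$, so $d_{f_n}$ is a percolation metric, and because off-graph travel costs the maximal value $\sup A$, near-optimal $d_{f_n}$-paths can be replaced by paths in the graph; one then checks that the unit ball $B_n$ of $d_{f_n}$ lies within Hausdorff distance $O(t_n^{-1})$ of $\frac{\tilde{B}(t_n)}{t_n}$. Hence $B_n\to K$ with $B_n\in\mathcal{W}_A^d$, giving $K\in\overline{\mathcal{W}_A^d}$ directly --- the anisotropy is absorbed into the graph structure at scale $t_n^{-1}$ and disappears only in the Hausdorff limit of the balls, never of the metrics. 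If you want to salvage your scheme, you would need an additional (nontrivial) approximation theorem stating that the unit ball of any such anisotropic limit is a Hausdorff limit of unit balls of genuine percolation metrics, which is not supplied in your proposal.
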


\begin{megj}
Due to Proposition 1.2, this theorem yields $\overline{\mathcal{W}_A^d}\subseteq\overline{\mathcal{P}_A^d}$.
\end{megj}

\begin{proof}[Proof of Theorem 3.3]
Consider the subgraph $\tilde{B}(t_n)$ of $\mathbb{Z}^d$ accessible in time $t_n$ from the origin. We obviously have $\frac{\tilde{B}(t_n)}{t_n}\to K$ by assumption. Now we define $f_n$ as follows: in the relative interior of an edge of the graph $\frac{\mathbb{Z}^d}{t_n}$ we define $f_n$ to have the same value as the passage time of the corresponding edge in $\mathbb{Z}^d$. (In the endpoints this definition would not give a unique value, but the value on a discrete set will not have any importance anyway.) For any remaining point $x\in\mathbb{R}^d$ we set $f_n(x)=\sup A$. We state that the Hausdorff distance of the closed unit ball $B_n$ of $d_{f_n}$ and $\frac{\tilde{B}(t_n)}{t_n}$ converges to 0: as $B_n\in\mathcal{W}_A^d$ that would conclude the proof. As the containment $\frac{\tilde{B}(t_n)}{t_n}\subseteq B_n$ is obvious, we only have to examine how far a point of $B_n$ can lie from the graph $\frac{\tilde{B}(t_n)}{t_n}$. We also note that the value of $f_n$ only matters in $D_{\frac{1}{\inf A}}$, as $B_n\subseteq D_{\frac{1}{\inf A}}$ necessarily holds.

First let us notice that if $x$ is a vertex of $\frac{\mathbb{Z}^d}{t_n}$ then in the definition of $d_{f_n}(0,x)$ it suffices to consider topological paths which are also paths in the graph $\frac{\mathbb{Z}^d}{t_n}$. Indeed, by definition for any $\varepsilon>0$ there exists a topological path $\Gamma$ from the origin to $x$ so that we have
\begin{displaymath}
\int_{\Gamma}f_n(t)\mathrm{d}s<d_{f_n}(0,x)+\varepsilon.
\end{displaymath}
Our aim is to show that there exists a $\Gamma'$ which is a path in the graph $\frac{\mathbb{Z}^d}{t_n}$ and the integral of $f_n$ on $\Gamma'$ does not exceed the integral of $f_n$ on $\Gamma$. If $\Gamma$ itself is such a path then we are done. Moving towards a contradiction, assume that there is a point $x$ and a path $\Gamma$ connecting the origin to $x$ which is not such a path, and there is no such $\Gamma'$. Let $N(\Gamma)$ be the number of pieces of $\Gamma$ in the graph $\frac{\mathbb{Z}^d}{t_n}$, where by such pieces we mean largest connected components in one of the edges of $\frac{\mathbb{Z}^d}{t_n}$. Meanwhile let $M(\Gamma)$ be the number of complementary components of $\Gamma$. Now choose a contradictory $x,\ \Gamma$ so that $M(\Gamma)$ is minimal, and amongst these one for which $N(\Gamma)$ is minimal. As $\Gamma$ is not a path in the graph $\frac{\mathbb{Z}^d}{t_n}$, we can choose line segments $[y,y']$ and $[z,z']$ so that they are consecutive pieces in the above sense, that is they are respectively contained by some edges $\frac{e_y}{t_n}, \frac{e_z}{t_n}$, and the subpath $\Gamma_1$ of $\Gamma$ between $y'$ and $z$ might hit the graph $\frac{\mathbb{Z}^d}{t_n}$ in a discrete set only. Now choose $\Gamma'$ to be a modification of $\Gamma$: from the origin to $y$ and from $z$ to $x$ we do not alter $\Gamma$, but we replace some of the remaining parts based on the relation between $\tau(e_y)$ and $\tau(e_z)$ and the relative position of these edges. Notably, unless $e_y$ is an orthogonal translated image of $e_z$, it is simple to check that there is an $\ell_1$-optimal topological path $\Gamma_1'$ from $y'$ to $z$ which is in fact a path in the graph $\frac{\mathbb{Z}^d}{t_n}$. Moreover, as $\Gamma_1$ does not hit the graph $\frac{\mathbb{Z}^d}{t_n}$, we know $f_n=\sup A$ in $\Gamma_1$ almost everywhere, while it is at most $\sup A$ in $\Gamma_1'$. Hence, if we define $\Gamma'$ as the topological path gained from $\Gamma$ by replacing $\Gamma_1$ with $\Gamma_1'$, we reduce the $d_{f_n}$-length, thus $\Gamma'$ also has to be a contradictory topological path from the origin to $x$. (Or it is already a path in the graph, which would also be a contradiction.) However, we reduced $M(\Gamma)$ by one, which contradicts the choice of $x,\ \Gamma$. Thus we have a contradiction in the case when $e_y$ cannot be obtained from $e_z$ by an orthogonal translation.

Let us consider the other case. Let us also assume $\tau(e_y)\leq\tau(e_z)$, in the other case we can use the same argument by symmetry. In this case we proceed the following way: we project orthogonally $z'$ to $\frac{e_y}{t_n}$ to gain $z^*$, and we gain $\Gamma'$ by replacing the subpath of $\Gamma$ from $y$ to $z'$ by $[y,z^*]\cup[z^*,z']$. Now 
\begin{displaymath}
\left|y-z^*\right|\leq|y'-y|+|z'-z|,
\end{displaymath}
hence by $\tau(e_y)\leq\tau(e_z)$ we have that the integral of $f_n$ on $[y,z^*]\subseteq\Gamma'$ cannot exceed the integral of $f_n$ on $[y,y']\cup[z,z']\subseteq\Gamma$. On the other hand, the part of $\Gamma'$ connecting the edges $\frac{e_y}{t_n}$ and $\frac{e_z}{t_n}$ is $\ell_1$-optimal, hence the integral of $f_n$ here cannot exceed the integral of $f_n$ on the corresponding part of $\Gamma$, either. Consequently, $\int_{\Gamma'}f_n\leq\int_{\Gamma}f_n$, thus $\Gamma'$ also has to be a contradictory topological path from the origin to $x$. (Or it is already a path in the graph, which would also be a contradiction.) However, we eliminated the piece $[z,z']$ of $\Gamma$, hence we reduced $N(\Gamma)$ by one while not increasing $M(\Gamma)$. (Except for the case when $z^*,\ z'$ are both vertices of the graph $\frac{\mathbb{Z}^d}{t_n}$, that is $[z^*,z']$ is the union of a few consecutive edges, but in this case, we reduce $M(\Gamma)$ by the previous step.) It contradicts the choice of $x,\ \Gamma$. Thus the claim of the previous paragraph holds.

Now let $x\in B_n$ arbitrary. Consider the smallest lattice hypercube of $\frac{\mathbb{Z}^d}{t_n}$ containing $x$. Denote an arbitrary vertex of it by $x'$. Now $|x-x'|\leq dt_n^{-1}$, which simply yields that $d_{f_n}(x,x')\leq dt_n^{-1} \sup A$. Consequently,
\begin{displaymath}
d_{f_n}(0,x')\leq 1+dt_n^{-1} \sup A
\end{displaymath}
by the triangle inequality. Thus by the claim of the previous paragraph for any $\varepsilon>0$ there exists a path $\Gamma$ from the origin to $x'$ so that it is in the graph $\frac{\mathbb{Z}^d}{t_n}$ and 
\begin{displaymath}
\int_{\Gamma}f(t)\mathrm{d}s<1+dt_n^{-1} \sup A+\varepsilon.
\end{displaymath}
Choose for example $\varepsilon=dt_n^{-1} \sup A$. Now if we go back on $\Gamma$ from $x'$ by $k\leq|\Gamma|$ edges, where $k$ is to be precised later, we get back to a vertex $y$ of $\frac{\mathbb{Z}^d}{t_n}$, and the subpath of $\Gamma$ from $0$ to $y$ guarantees that
\begin{displaymath}
d_{f_n}(0,y)<1+2dt_n^{-1} \sup A-kt_n^{-1} \inf A=1+(2d\sup A - k\inf A)t_n^{-1}.
\end{displaymath}
Thus we can choose $k$ such that the $y$ we obtain satisfies $d_{f_n}(0,y)<1$, and $k$ is bounded by
\begin{displaymath}
k\leq\left\lceil\frac{2d \sup A}{\inf A}\right\rceil.
\end{displaymath}
Hence by the definition of $f_n$ in the graph $\frac{\mathbb{Z}^d}{t_n}$ we have that $y\in\frac{\tilde{B}(t_n)}{t_n}$. However, by the choice of $k$ we have that $|x'-y|\leq\left\lceil\frac{2d\sup A}{\inf A}\right\rceil t_n^{-1}$. Adding it to the upper bound on $|x-x'|$ we obtain
\begin{displaymath}
|y-x|\leq dt_n^{-1} + \left\lceil\frac{2d\sup A}{\inf A}\right\rceil t_n^{-1}.
\end{displaymath}
This quantity is a uniform bound: for any $n$ and $x\in B_n$ we have such a $y\in  \frac{\tilde{B}(t_n)}{t_n}$. Thus as we have that $\frac{\tilde{B}(t_n)}{t_n}\subseteq B_n$, and $t_n\to+\infty$, we obtain that the elementwise Hausdorff distance of these sequences tends to 0. Consequently, $B_n\to K$ as well. As $B_n\in\mathcal{W}_A^d$, it concludes the proof.
\end{proof}

In some sense the above theorem gives a necessary condition on a set $K\in\mathcal{K}_A^d$ being a limit set, however, it would be quite elaborate to check it in any somewhat complicated case. It is more convenient to think about this result as a kind of reformulation of the original definition, whose significance lies in giving a somewhat new perspective, which helps in proving Theorem 1.3 through the construction given in the following lemma:

\begin{lemma}
If $K\in\mathcal{K}_A^d$ is convex, then $K\in\overline{\mathcal{W}_A^d}$.
\end{lemma}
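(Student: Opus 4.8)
Write $a:=\inf A$ and $b:=\sup A$; under the standing assumption of this section $0<a<b<\infty$ (if $a=b$ then $K=D_{1/a}$ and there is nothing to prove), and $a,b\in\overline A$, so it is enough to approximate $K$ in the Hausdorff metric by closed unit balls of percolation metrics $d_{f,\ell_1}$ in which $f$ takes only the two values $a$ and $b$. Such an $f$ is encoded by its \emph{cheap set} $C:=f^{-1}(a)$: a topological path is charged $a$ per unit of $\ell_1$-arclength it spends in $C$ and $b$ per unit it spends off $C$. Let $\|\cdot\|_K$ denote the Minkowski gauge of $K$, so that $K=\{x:\|x\|_K\le1\}$, one has $a|x|\le\|x\|_K\le b|x|$ for all $x$ (this is just the inclusion $D_{1/b}\subseteq K\subseteq D_{1/a}$), and $(1+\delta)K\supseteq K+D_{\delta/b}$ for every $\delta>0$. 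Fix $\varepsilon>0$; I will construct $f$, equivalently $C$, so that its unit ball $W$ satisfies $d_H(W,K)=O(\varepsilon)$ with an absolute implied constant, which on letting $\varepsilon\to0$ gives $K\in\overline{\mathcal W_A^d}$. (Below, $o_\eta(1)$ denotes a quantity that tends to $0$ as the tube radius $\eta$ introduced momentarily tends to $0$, with $\varepsilon$ and the skeleton held fixed.)

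The set $C$ will be the $\eta$-neighbourhood of a finite piecewise linear tree $S\subseteq(1+\varepsilon)K$ rooted at the origin, where $\eta\ll\varepsilon$ is fixed last, and $S$ has the following properties: every root-to-leaf branch has $\ell_1$-length exactly $1/a$ and ends on $\partial K$; along every branch, $\|\cdot\|_K$ is nondecreasing and increases at rate exactly $a$ per unit $\ell_1$-length on each linear segment; the leaves form an $\varepsilon$-net of $\partial K$; and $S$ is $\varepsilon$-dense in $(1-\varepsilon)K$. The device that makes these compatible---crucially also for $d=2$---is that at every point the set of unit-$\ell_1$ directions $v$ with $\langle\nabla\|\cdot\|_K,v\rangle=a$ is nonempty, because $\|\nabla\|\cdot\|_K\|_{\ell_\infty}\ge a$ everywhere; a microscopic ``zigzag'' alternating between two such directions still raises $\|\cdot\|_K$ at rate $a$ per unit $\ell_1$-length, yet, by varying the proportions, the net direction of the zigzag can be made to realise any local radial rate in $[a,b]$. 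Hence a single zigzag branch of $\ell_1$-length $1/a$ can be steered so as to end at any prescribed $x^*\in\partial K$ (the surplus $1/a-|x^*|$ being absorbed into the zigzag), branches can be split off towards the points of an $\varepsilon$-net of $\partial K$, and, because the microscopic wiggles can be kept of amplitude $\ll\varepsilon$, the resulting skeleton stays within $O(\varepsilon)$ of the union of the radii $[0,x^*]$, which yields the density. Since $\|\cdot\|_K$ is monotone along each branch, no branch meets a level $\{\|\cdot\|_K=c\}$ twice, and two distinct branches lie at $\ell_1$-distance $\ge c\varepsilon$ at level $c$; choosing $\eta\ll\varepsilon$ therefore makes $C$ \emph{tree-like} above the level $2\eta/\varepsilon$, in the sense that for $p,q\in S$ on such levels any path from $p$ to $q$ inside $C$ has $\ell_1$-length at least $d_S(p,q)-o_\eta(1)$, where $d_S$ is $\ell_1$-arclength along $S$.

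The inclusion $W\supseteq(1-O(\varepsilon))K$ is immediate: a point $z\in(1-\varepsilon)K$ lies within $O(\varepsilon)$ of a point $p\in S$ on a branch with $d_S(0,p)\le(1-\varepsilon)/a$, and travelling along that branch and then straight to $z$ costs at most $a(1-\varepsilon)/a+b\,O(\varepsilon+\eta)\le1$ once $\eta$ is small, so $z\in W$.

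The inclusion $W\subseteq(1+O(\varepsilon))K$ is the heart of the matter, and the step I expect to require the most care. I would prove that $d_{f,\ell_1}(0,y)\ge\|y\|_K-o_\eta(1)$ for every $y$; this forces $W\subseteq(1+C_d\varepsilon)K$ once $\eta$ is small, with $C_d$ depending only on $d$. Let $\gamma$ be a piecewise linear path from $0$ to $y$, parametrised by $\ell_1$-arclength on $[0,L]$, and set $g(s)=\|\gamma(s)\|_K$, so $\|y\|_K=\int_0^L g'(s)\,ds$. Off $C$ one has $g'(s)\le\|\gamma'(s)\|_K\le b|\gamma'(s)|=b=f(\gamma(s))$, so the only contribution to the ``excess'' $\int_0^L\bigl(g'(s)-f(\gamma(s))\bigr)\,ds$ comes from $\{\gamma\in C\}$. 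Writing the latter as a union of maximal excursions $[s_1,s_2]$ and noting that $\gamma(s_i)$ lies within $\eta$ of a skeleton point $p_i$, the monotone rate-$\le a$ property along branches gives $g(s_2)-g(s_1)\le\|p_2\|_K-\|p_1\|_K+O(\eta)\le a\,d_S(p_1,p_2)+O(\eta)$, on splitting the tree-path from $p_1$ to $p_2$ at their common ancestor and using monotonicity of $\|\cdot\|_K$ on each sub-branch; tree-likeness gives $s_2-s_1\ge d_S(p_1,p_2)-o_\eta(1)$; so each excursion contributes at most $O(\eta)+o_\eta(1)$ to the excess. Summing over the excursions---discarding at the outset any $\gamma$ with $L$ so large that already $\mathrm{cost}(\gamma)\ge aL\ge\|y\|_K$---yields $\|y\|_K\le\mathrm{cost}(\gamma)+o_\eta(1)$, and passing to the infimum over $\gamma$ gives the desired bound. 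The delicate point is precisely this summation: a path may cross $\partial C$ many times, so one must use the tree-likeness of $C$---hence the spacing deliberately built into $S$---to see that such oscillations always cost enough $\ell_1$-length to keep the total excess $o_\eta(1)$. Together with the verification that the zigzag skeleton can be built with all its listed properties at once, this is where the real work of the lemma lies.
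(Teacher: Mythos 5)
Your strategy (a two-valued $f$ whose cheap set is a thin tube of radius $\eta$ around a gauge-monotone zigzag tree, with the unit ball sandwiched between $(1-O(\varepsilon))K$ and $(1+O(\varepsilon))K$) is genuinely different from the paper's, but as written it has two unresolved gaps, both of which you flag but neither of which you close. The first is the skeleton itself: the existence, at every point, of a unit-$\ell_1$ direction along which the gauge of $K$ grows at rate exactly $\inf A$ requires an intermediate-value argument on the $\ell_1$-sphere together with some treatment of the points where the gauge is not differentiable (for a polytope these form ridges, and your branches may be forced to travel along or near them); and steering a rate-$a$ zigzag of $\ell_1$-length exactly $1/\inf A$ to a prescribed point of $\partial K$, while splitting off branches to an $\varepsilon$-net and keeping distinct branches separated by $\gtrsim c\varepsilon$ at gauge level $c$ so that the tube is tree-like, is a nontrivial construction that is asserted rather than carried out. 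The second, and more serious, gap is the summation over excursions in the lower bound $d_{f,\ell_1}(0,y)\geq\|y\|_K-o_\eta(1)$: for fixed $\eta$ the infimum ranges over competitor paths that may enter and leave the tube arbitrarily many times, so a per-excursion bound of $O(\eta)+o_\eta(1)$ only gives $N\cdot O(\eta)$ with $N$ uncontrolled by $\eta$ or by the fixed skeleton, which is not $o_\eta(1)$. Closing this requires a global rather than per-excursion accounting (for instance, inside each cone on which the gauge of $K$ is linear, the total gauge change of any sub-path equals the linear functional applied to the net displacement, so transversal oscillations inside the tube cannot accumulate excess); nothing of this kind appears in your argument, and without it the step you yourself call "the heart of the matter" is open.

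For contrast, the paper sidesteps both difficulties. It first pretends $A$ is the whole interval $[\inf A,\sup A]$ and puts the intermediate value $|x_i|^{-1}$ on finitely many radial segments $(0,x_i)$ toward a rational $\varepsilon$-net of $\partial K$, with $\sup A$ elsewhere; because the cheap set is a finite union of segments, optimal topological paths have the explicit two-piece form $[0,y]\cup[y,x]$, and the containment $B_f\subseteq K$ follows from a short convex-combination inequality using the convexity of $K$ directly. Only afterwards does it discretize each segment into alternating pieces with values $\sup A$ and $\inf A$ of matching integral, and it controls the resulting perturbation of the unit ball by comparing the two metrics on the segments case by case. Your approach, if completed, would have the merit of producing a two-valued $f$ in one step and of not privileging radial segments, but as it stands the two key steps are acknowledged placeholders rather than proofs.
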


\begin{proof}

It is well-known and easy to check that convex polytopes with rational vertices form a dense subset of convex sets in the Hausdorff metric, and the same argument shows that convex polytopes with rational vertices of $\mathcal{K}_A^d$ form a dense subset of convex sets of $\mathcal{K}_A^d$. Hence it suffices to prove the lemma for $K$ convex polytopes with rational vertices. We can also assume that $K$ has no boundary point in $\partial D_{\frac{1}{\sup A}}$. First we will further assume that $A$ is an interval, or in other words we will allow the metric inducing functions to have values in $[\inf A, \sup A]$ instead of $\overline{A}$.

Fix $\varepsilon>0$ rational with $\varepsilon<\frac{1}{\sup A}$ and take a finite $\varepsilon$-net $H=\{x_1,x_2,...,x_k\}$ of the compact set $\partial K$. These points can be chosen so that each of them  has rational coordinates. By definition and assumption we clearly have $\frac{1}{\sup A} < |x_i| \leq \frac{1}{\inf A}$. Now on the open line segment $I_i=(0,x_i)$ let $f(x)=|x_i|^{-1}$, and in the complement of these line segments let $f(x)=\sup A$. We claim that the closed unit ball $B_f$ of $d_f$ is in $K$. As this unit ball obviously contains the points of $H$, this claim would simply imply that the Hausdorff distance of $B_f$ and $K$ is at most $\varepsilon$, which would yield $K\in\overline{\mathcal{W}_A^d}$.

First we show that if $x\in I_i$ for some $i$ then $[0,x]$ is a $d_f$-optimal topological path. Proceeding towards a contradiction assume that there exists $x \in I_i$ for some $i$ such that there is a shorter topological path $\Gamma$ in $d_f$ from $0$ to $x$ than $[0,x]$. Choose $x$ and $\Gamma$ so that the number of linear pieces of $\Gamma$ is minimal. Amongst such $x$s and $\Gamma$s choose $x$ and $\Gamma$ so that the number of intersected intervals $I_j$ is minimal. As $[0,x]$ is optimal in $\ell_1$, we might assume that $\Gamma$ has a common line segment with one of the intervals $I_j$, as otherwise we have $|\Gamma|\geq |x|$ and  $\left.f\right|_\Gamma\geq \left.f\right|_{[0,x]}$. Besides that we can also assume that $x$ is the first point of $\Gamma$ in $I_i$. Now choose $y$ to be the last point of $\Gamma$ in one of the intervals $I_j$ before $x$. By the choice of $\Gamma$ we can immediately yield that the piece of $\Gamma$ from $0$ to $y$ equals $\Gamma_1=[0,y]$ in fact. Denote the second part of $\Gamma$ by $\Gamma_2$. We know that in $\Gamma_1$ we have $f(t)=|x_j|^{-1}$ while in $\Gamma_2$ we have $f(t)=\sup A$. Moreover, in $[0,x]$ we have $f(t)=|x_i|^{-1}$. Consequently,
\begin{equation}
\int_{\Gamma}f(t)\mathrm{d}s=|y||x_j|^{-1}+|y-x|\sup A,
\end{equation}
while
\begin{equation}
\int_{[0,x]}f(t)\mathrm{d}s=|x||x_i|^{-1}.
\end{equation}
By assumption, we have an inequality between these quantities:
\begin{equation}
|y||x_j|^{-1}+|x-y|\sup A<|x||x_i|^{-1}.
\end{equation}
Our aim right now is to get a contradiction, which we try to achieve by using the convexity of $K$. Note that by $K\in\mathcal{K}_A^d$ we know that in the direction of $x-y$ the shape $K$ contains a segment of $\ell_1$-length $(\sup A) ^{-1}$ starting from the origin, and in the direction of $y$ it contains a segment of $\ell_1$-length $|x_j|$ by definition. Thus we have
\begin{equation}
\frac{x-y}{|x-y|\sup A},\  \frac{y|x_j|}{|y|} \in {K}.
\end{equation}
Let us express $x$ as a positive linear combination of these vectors:
\begin{equation}
x=\frac{x-y}{|x-y|\sup A}\cdot(|x-y|\sup A)+\frac{y|x_j|}{|y|}\cdot\frac{|y|}{|x_j|}.
\end{equation}
The sum of these coefficients might differ from 1, thus multiply both of them by the same scalar to get a convex combination of the original vectors. The convexity of $K$ yields that this vector is also in $K$ by (4):
\begin{equation}
\frac{x}{|x-y|\sup A+|y||x_j|^{-1}}\in K.
\end{equation}
On the other hand, the furthest point of $K$ in the direction of $x$ is the endpoint of $I_i$, that is $x_i=\frac{x}{|x|}|x_i|$. It means
\begin{equation}
\frac{|x_i|}{|x|}\geq\frac{1}{|x-y|\sup A+|y||x_j|^{-1}}.
\end{equation}
Taking the reciprocal of this inequality contradicts (3), thus it verifies the claim about the $d_f$-optimal topological paths to the points of the intervals $I_1,..., I_k$.

This observation yields that to any $x\in{K}$ there exists a $d_f$-optimal topological path $\Gamma$: amongst the ones which do not share segments with any of the intervals $I_1,..., I_k$ the $[0,x]$ line segment is optimal with $d_f$-length $|x|\sup{A}$. On the other hand, amongst the ones which hit any of these intervals we only have to consider the ones which are of the form $[0,y]\cup[y,x]$ for some $y\in{I_j}$ where $[x,y]$ does not intersect any of these intervals, as our previous observation implies. However, for any interval $I_j$ there is an optimal $\Gamma_j$ of these topological paths by a simple compactness argument. Hence the $d_f$-optimal topological path $\Gamma$ to $x$ arises as the optimal one of the path $[0,x]$ and $\Gamma_1,...\Gamma_n$. This argument also shows that what is the closed unit ball $B_f$ of $d_f$: if the optimal topological path $\Gamma$ to $x$ with $d_f$-length at most 1 does not hit any of the intervals $I_1,...,I_k$, then we have $x\in D_{\frac{1}{\sup A}}$, that is in $K$. On the other hand, assume that $\Gamma=\Gamma_j=[0,y]\cup[y,x]$ for some $y\in{I_j}$. Here
\begin{displaymath}
\int_{[0,y]}f(t)\mathrm{d}s=|y||x_j|^{-1}
\end{displaymath}
and 
\begin{displaymath}
\int_{[y,x]}f(t)\mathrm{d}s=|x-y|\sup A.
\end{displaymath}
As the $d_f$-length of the path $\Gamma$ is at most $1$, the sum of these quantities is also at most 1, consequently
\begin{equation}
|x-y|\leq\frac{1-|y||x_j|^{-1}}{\sup A}=:r_y.
\end{equation}
Thus we obtain that $B_f$ contains an $\ell_1$-ball of radius $r_y$ around $y$. However, for $y=0$ this $\ell_1$ ball is contained by $K$ as it equals $D_{\frac{1}{\sup A}}$ and $K\in\mathcal{K}_A^d$. On the other hand, for $y=x_j$ this $\ell_1$ ball is trivial, hence it is also contained by $K$ as $x_j\in{K}$. Consequently, as the function $r_y$ is linear in $[0,x_j]$ and $K$ is convex, we have that the $\ell_1$-ball of radius $r_y$ around $y$ is in $K$ for each $y\in I_j$. 

What we conclude by this argument that $B_f$ equals the union of $D_{\frac{1}{\sup A}}$ and all the $\ell_1$-balls of radius $r_y$ around $y$. Moreover, all the points of $B_f$ are in $K$. We have already seen that it concludes the proof of the lemma in the case when $A$ is an interval.

Now let us consider the case when $A$ is arbitrary. Let $A'=[\inf A, \sup A]$. As $\mathcal{K}_A^d=\mathcal{K}_{A'}^d$, we have $K\in\mathcal{K}_{A'}^d$ as well. Hence we can consider the function $f$ we constructed in the previous case, which induces a metric with unit ball $B_f$ so that its Hausdorff distance from $K$ is at most $\varepsilon$. We are going to replace $f$ by $\tilde{f}$ so that $\tilde{f}$ induces a percolation metric $d_{\tilde{f}}$ which is almost the same as $d_f$, hence its unit ball $B_{\tilde{f}}$ is also close to $K$. As $f$ has values differing from $\sup A$ only in the intervals $I_i$, it suffices to modify $f$ there.

Let $s>0$ be small, to be fixed later. We partition each of the intervals $I_i$ into subintervals of equal length at most $s_i\leq s$, where $\varepsilon$ is an integer multiple of each of the $s_i$s. As all the coordinates and $x_i$ and $\varepsilon$ are rational, we can choose the $s_i$s this way. Such a subinterval $J_i$ will be cut into two further subintervals $J_{i,1}$ and $J_{i,2}$ with length $s_{i,1}$ and $s_{i,2}$ so that if $\tilde{f}=\sup A$ in $J_{i,1}$ and $\tilde{f}=\inf A$ in $J_{i,2}$, then $f$ and $\tilde{f}$ has the same integral in $J_i$, that is
\begin{displaymath}
s_i|x_i|^{-1}=s_{i,1}\sup A + s_{i,2}\inf A.
\end{displaymath}
Our long-term goal is to prove that the Hausdorff distance of $B_{\tilde{f}}$ and $K$ can be arbitrarily small for sufficiently small $s$. There are two things to be checked: we need that $K$ is contained by a small neighborhood of $B_{\tilde{f}}$, and the converse. The first one is simple: by construction it is obvious to see that the integral of $f$ and $\tilde{f}$ equals on any line segment $I_i$, hence $B_{\tilde{f}}$ contains $I_i$. However, the intervals $I_i$ form an $\varepsilon$-net of $K$ which yields that $K$ is contained by the $\varepsilon$-neighborhood of $B_{\tilde{f}}$. The second containment proves to be trickier.

By the choice of $f$ we know that $B_f$ is contained by the $\varepsilon$-neighborhood of $K$. Hence it would be sufficient to verify that $B_{\tilde{f}}$ is contained by a small neighborhood of $B_f$. Our first step in this direction is verifying the following claim: for small $s$, if $x\in{I_i\setminus D_{\varepsilon}}$, then there exists a $d_{\tilde{f}}$-optimal topological path from $\partial D_{\varepsilon}$ to $x$, notably the line segment $J=\left[{x_i}',x\right]\subseteq I_i$, where ${x_i}'=[0,x_i]\cap \partial D_{\varepsilon}$. Proceeding towards a contradiction, assume there is a shorter piecewise linear topological path with finitely many pieces $\Gamma$ in $d_{\tilde{f}}$ to some $x$.  Let $z$ be its starting point. Clearly we can assume that it is the last point of $\Gamma$ in $\partial D_\varepsilon$. Moreover, by an argument similar to the first step of the case when $A$ is an interval, we can assume that $\Gamma=[{x_j}',y]\cup[y,x]$ where $[{x_j}',y]\subseteq I_j$ for some $j\neq{i}$.

First we note that if $|J|$ is very short, that is $x$ is sufficiently close to $D_{\varepsilon}$, then it cannot be possible. Indeed, the set of points ${x_i}'$ form a discrete set, hence between such points there is a minimal $\ell_1$-distance $\delta$. Consequently, $|\Gamma|\geq{\delta}$. Hence if $|J|<\delta\cdot\frac{\inf A}{\sup A}$ then we surely have that the integral of $\tilde{f}$ on $\Gamma$ exceeds its integral on $J$, a contradiction. Thus we can assume that $|J|$ is larger than this bound independent from $s$. Focus only on $s$s smaller than this bound.

By the definition of $\tilde{f}$ it is quite simple to give an upper bound on the integral of $\tilde{f}$ on $J$. Explicitly, in $J$ we have that $\tilde{f}$ equals $\sup A$ and $\inf A$ alternately, and if we have consecutive segments with values $\sup A$ and $\inf A$, then the integral of $\tilde{f}$ on the union of these segments is the same as the integral of $f$. Hence the integral of $\tilde{f}$ might be larger than the integral of $f$ due to the fact that there is one more line segment in which $\tilde{f}$ has value $\sup A$ while $f$ has value $|x_i|^{-1}$ on the complete segment. This segment has length $s_{i,1}$. Consequently, we obtain the following bound
\begin{equation}
\int_{J}\tilde{f}\leq\int_{J}f+s_{i,1}(\sup A- |x_i|^{-1}).
\end{equation}
Similarly, we can give a lower bound on the integral
\begin{equation}
\int_{[{x_j}',y]}\tilde{f}\geq\int_{[{x_j}',y]}f,
\end{equation}
since $s_j$ divides $\varepsilon$, and hence the alternating sequence of line segments with value $\sup A$ and $\inf A$ starts with a complete interval with value $\sup A$ from ${x_j}'$. Consequently, as $\tilde{f}$ equals $f$ almost everywhere in $[y,x]$, we yield
\begin{equation}
\int_{\Gamma}\tilde{f}\geq\int_{\Gamma}f.
\end{equation}
Combining (9) and (11) and the hypothetical inequality between $\int_{J}\tilde{f}$ and $\int_{\Gamma}\tilde{f}$ we conclude
\begin{equation}
\int_{\Gamma}f<\int_{J}f+s_{i,1}(\sup A- |x_i|^{-1}).
\end{equation}
We distinguish three cases based on the relation between $|x_i|$ and $|x_j|$. 

\begin{enumerate}[(i)]

\item Assume that $|x_i|<|x_j|$. As there are finitely many such $j$s, for such $j$s we can choose $r>0$ such that $|x_i|^{-1}>|x_j|^{-1}+r$. Considering what we obtained in the first part, we see
\begin{equation}
\int_{[0,{x_j}']\cup\Gamma}f\geq \int_{[0,{x_i}']\cup J}f.
\end{equation}
By (12) and (13), we have
\begin{equation}
\int_{[0,{x_j}']}f>\int_{[0,{x_i}']}f-s_{i,1}(\sup A- |x_i|^{-1}),
\end{equation}
that is
\begin{equation}
\varepsilon |x_j|^{-1}>\varepsilon |x_i|^{-1}-s_{i,1}(\sup A- |x_i|^{-1})>\varepsilon |x_j|^{-1}+\varepsilon r-s_{i,1}(\sup A- |x_i|^{-1})
\end{equation}
by the choice of $r$. However, for small enough $s$ (and consequently, small enough $s_{i,1}$) it is impossible.

\item Assume that $|x_i|>|x_j|$. For such $j$s we might choose $r>0$ such that $|x_i|^{-1}+r<|x_j|^{-1}$ for all such $j$. Now the integral of $\tilde{f}$ on $J$ is at most $(|x|-\varepsilon)|x_i|^{-1}+s_{i,1}(\sup A- |x_i|^{-1})$, while the integral of $\tilde{f}$ on $\Gamma$ might be estimated from below by
\begin{displaymath}
(|x|-\varepsilon)|x_j|^{-1}>(|x|-\varepsilon)(|x_i|^{-1}+r).
\end{displaymath}
Comparing these bounds we should have
\begin{displaymath}
(|x|-\varepsilon)r<s_{i,1}(\sup A- |x_i|^{-1}).
\end{displaymath}
In this expression the left hand side has a fixed positive bound as we ruled out the possibility of $|J|=|x|-\varepsilon$ being too small. However, the right hand side can be arbitrarily small if we choose $s$ small enough, a contradiction.

\item Finally, assume $|x_i|=|x_j|$. Now by assumption we might choose $r>0$ such that $|x_i|<\sup A - r$. We know that $|y-x|\geq\delta'$ for some $\delta'>0$ as the $\ell_1$-distance between any two distinct segments $[{x_j}',x_j], [{x_i}',x_i]$ is positive, and there are only finitely many such segments. Hence we can deduce $\int_{\Gamma}\tilde{f}\geq{|J||x_i|^{-1}+\delta' r}$, as $|\Gamma|\geq{|J|}$, which is supposed to be smaller then $|J||x_i|^{-1}+s_{i,1}(\sup A- |x_i|^{-1})$. However, it cannot hold if we choose $s$ small enough.

\end{enumerate}

Thus we proved the claim: if $x\in{I_i\setminus D_{\varepsilon}}$, then there exists a $d_{\tilde{f}}$-optimal topological path from $\partial D_{\varepsilon}$ to $x$, notably the line segment $J=\left[{x_i}',x\right]\subseteq I_i$, and hence $d_{\tilde{f}}({x_i}',x)\leq d_f({x_i}',x)+\varepsilon$, if $s$ is sufficiently small. This observation quickly yields that for any $x\in K\setminus D_{\varepsilon}$ we have $d_{\tilde{f}}(\partial D_{\varepsilon},x)\leq d_f(\partial D_{\varepsilon},x)+\varepsilon$, as we have that the $d_{\tilde{f}}$-optimal topological path $\Gamma$ from $\partial D_{\varepsilon}$ to $x$ might have a common line segment with at most one of the segments $[0,x_i]$, thus the integral of $f$ and $\tilde{f}$ on $\Gamma$ might only differ by $\varepsilon$ as the two functions only differ in this piece of $\Gamma$.

Given this fact we can also say something about $d_{\tilde{f}}(0,x)$ for $x\notin D_{\varepsilon}$. (Other $x$s are contained by $B_{\tilde{f}}$ anyway.) A path from $0$ to $x$ must intersect $\partial D_{\varepsilon}$ at some point $y$. Thus we may conclude

\begin{equation}
\begin{split} d_{\tilde{f}}(0,x) & \geq\inf_{y\in\partial D_{\varepsilon}}(d_{\tilde{f}}(0,y)+d_{\tilde{f}}(y,x)) \\ & \geq\inf_{y\in\partial D_{\varepsilon}}((d_{f}(0,y)-\varepsilon(\sup A- \inf A))+(d_f(y,x)-\varepsilon)) \\ & =d_f(0,x)-\varepsilon(1+\sup A- \inf A), \end{split}
\end{equation}
where the second inequality comes from the fact that for points with $\ell_1$ distance $\varepsilon$ we have that the difference of their $d_f$ and $d_{\tilde{f}}$ distance is at most $\varepsilon(\sup A- \inf A)$. 

Using (16), we might finish the proof swiftly. We have seen that it would be sufficient to verify that $B_{\tilde{f}}$ is contained by a small neighborhood of $B_f$. We state that it holds for the neighborhood of $\ell_1$-radius $\frac{\varepsilon(1+\sup A- \inf A)}{\inf A}$. Indeed, consider $x$ so that it is not in this neighborhood. Then for any $z\in\partial B_f$ we have $|z-x|>\frac{\varepsilon(1+\sup A- \inf A)}{\inf A}$.  Now consider any piecewise linear topological path $\Gamma$ from 0 to $x$ with last point $z$ in $\partial B_f$. By (16), as $|z|\geq\frac{1}{\sup A}>\varepsilon$ we obtain
\begin{equation}
\int_{\Gamma}\tilde{f}\geq d_{\tilde{f}}(0,z)+d_{\tilde{f}}(z,x)>d_f(0,z)-\varepsilon(1+\sup A- \inf A)+|z-x|\inf A>1,
\end{equation}
that is $x\notin B_{\tilde{f}}$. Thus we have that $B_{\tilde{f}}$ is in the neighborhood of $B_f$ with radius $\frac{\varepsilon(1+\sup A- \inf A)}{\inf A}$. Consequently, it is in the neighborhood of $K$ with radius $\frac{\varepsilon(1+\sup A- \inf A)}{\inf A}+\varepsilon$. Thus the Hausdorff distance of $K$ and $B_{\tilde{f}}$ can be arbitrarily small, which verifies $K\in\mathcal{W}_A^d$. \end{proof}

Using a construction similar to the one we have just seen, we can prove Theorem 1.3. 

\begin{proof}[Proof of Theorem 1.3] As in the proof of Lemma 3.5, it is sufficient to consider convex polytopes with rational vertices in $\mathcal{K}_A^d$, and we can also assume that they have no boundary point in $\partial D_{\frac{1}{\sup A}}$. Let $\varepsilon>0$ be rational and smaller than $\frac{1}{\sup A}$ and let $x_1,...,x_k,\  \tilde{f}$ be as in the proof of the lemma with $d_H(K,B_{\tilde{f}})<\varepsilon':=\frac{\varepsilon(1+\sup A- \inf A)}{\inf A}+\varepsilon$. This original $\tilde{f}$ had value $\sup A$ everywhere except for certain pieces of the line segments $I_i$, where it equaled $\inf A$. These pieces contained a certain $\lambda_i$ ratio of $|I_i|$. By approximating $\lambda_i$ with rational numbers $(\lambda_{i,j})_{j=1}^{\infty}$ and using them for this ratio, we obtain approximating functions $(\tilde{f_{j}})_{j=1}^{\infty}$ so that $B_{\tilde{f_{j}}}$ converges to $B_{\tilde{f}}$ in the Hausdorff metric as the sum of the lengths of line segments where we modify $\tilde{f_j}\neq\tilde{f}$ can be arbitrarily small. Hence we can assume that $\tilde{f}$ is already defined such that these ratios are rational: each $I_i$ is partitioned into line segments of equal length $s_i$, and each line segment is divided into line segments of rational length $s_{i,1}$ and $s_{i,2}$, and $\tilde{f}$ has value $\inf A$ in the latter ones. We will modify this function in two steps.

We know that for $\varepsilon$ we have that there is a $d_{\tilde{f}}$-optimal path from $\partial D_{\varepsilon}$ to any $x\in I_i\setminus D_{\varepsilon}$, notably the line segment $[{x_i}',x]$ for ${x_i}'=[0,x_i]\cap \partial D_{\varepsilon}$. Moreover, from (i)-(iii) it is simple to see that for small enough $s$ there is a constant $c>0$ such that this line segment is shorter in $d_{\tilde{f}}$ than any other topological path sharing a segment with another $[{x_j}',x_j]$ by at least $c$. We will capitalize on this fact by a bit technical, but necessary argument. Let $\theta>0$ be small enough to be fixed later, and consider the cone $C_i$ with vertex $0$ and base $B_i=\{y: |y|=|x_i|, |y-x_i|\leq\theta\}$, that is the union of all the line segments $[0,y]$ for $\{y: |y|=|x_i|, |y-x_i|\leq\theta\}$. If $\theta$ is small enough, these cones are disjoint. We will define the $g$ modification of $\tilde{f}$ the following way: if $y\in D_{\varepsilon}$, then let $g=\sup A$. If $y\in{C_i \setminus D_{\varepsilon}}$, then let $g(y)=\tilde{f}(x)$ where $x$ is the unique point of $I_i$ with $|x|=|y|$. Apart from these sets we simply let $g=\tilde{f}=\sup A$. By the existence of $c$ we can easily deduce the following claim: for small enough $\theta$ we have that for any $x\in {C_i\cap\partial D_{\varepsilon}}$ there is a $d_{g}$-optimal topological path from $\partial D_{\varepsilon}$ to $x$, and this path is completely contained by $C_i$. Indeed, as there is an $\ell_1$-optimal topological path in $C_i$ and apart from these cones we have $g=\sup A$, we know that if there is a shorter topological path $\Gamma$ in $d_{g}$ from the boundary to some $x\in C_i$ then it must hit another $C_j$. We can assume that $\Gamma$ starts from some $y\in C_j$ with $i\neq j$. However, by choosing $\theta$ small we can have an arbitrarily small bound on the difference of the integrals $\int_{\Gamma}\tilde{f}$ and $\int_{\Gamma}g$. Combining this remark with the existence of $c$ yields our claim, which can be used to give a bound on how large $B_{g}$ can be compared to $B_{\tilde{f}}$ in a manner similar to the concluding step of the proof of Lemma 3.5. After these technical manipulations we obtain using small enough $\theta$ that $d_H(K,B_{g})<2\varepsilon'$. We will use a further modified version $\tilde{g}$ of $g$, which is obtained as follows: focus on a certain $I_i\setminus D_{\varepsilon}$ and $C_i \setminus D_{\varepsilon}$. This line segment is divided into pieces $I_{i,1,1},I_{i,1,2},I_{i,2,1},I_{i,2,2},...,I_{i,l,1},I_{i,l,2}$ with lengths alternating between $s_{i,1}$ and $s_{i,2}$. Denote the corresponding slices of $C_i$ by $C_{i,1,1},C_{i,1,2},C_{i,2,1},C_{i,2,2},...,C_{i,l,1},C_{i,l,2}$. Consider now any of these line segments, for example $[a,b]=I_{i,1,1}$. (For any other line segment we can proceed the same way.) Here $a,b\in\mathbb{Q}^d$, hence for all the $mt$ multiples with $t\in\mathbb{N}$ of a certain $m$ we have $a,b\in\frac{\mathbb{Z}^d}{mt}$. As $C_{i,1,l}$ contains an open cylinder with height $[a,b]$, if we choose a large enough such $mt$ we have an $\ell_1$ optimal path from $a$ to $b$ which is a path in the graph $\frac{\mathbb{Z}^d}{mt}$. Now if we choose $m$ to be a large enough common multiple $M$ of all the (finitely many) $m$s appearing this way, we arrive at a $\frac{\mathbb{Z}^d}{M}$ for which all the $[a,b]$s can be replaced by a path in the graph $\frac{\mathbb{Z}^d}{M}$ the above way. After all we obtain paths $\Gamma_i\subseteq C_i$ for each $i$ which are $\ell_1$-optimal from ${x_i}'$ to $x_i$. Now in each $\Gamma_i$ we will define $\tilde{g}$ to have the same value as $g$, but apart from that we let $\tilde{g}=\sup A$. This way we obtain $\tilde{g}\geq{g}$, hence $B_{\tilde{g}}\subseteq{B_g}$ holds, which yields that the $2\varepsilon'$-neighborhood of $K$ necessarily contains $B_{\tilde{g}}$. However, by definition along the path $\Gamma_i$ the integral of $\tilde{g}$ is the same as the integral of $\tilde{f}$ along $[{x_i}',x_i]$, which can be arbitrarily close to 1. Consequently, we have that an arbitrarily large piece of $\Gamma_i$ is contained by $B_{\tilde{g}}$. Hence as the $\varepsilon'+\theta$-neighborhood of $\bigcup_{i=1}^{k}\Gamma_i$ contains $K$, we have the same for the $\varepsilon'+\theta$-neighborhood of $B_{\tilde{g}}$. Consequently, if $\theta<\varepsilon'$, we have $d_H(K,B_{\tilde{g}})<2\varepsilon'$. In the following we will use this $\tilde{g}$ which is defined based on the parameters $\varepsilon,M$ which are to be fixed later.

Let us return to the statement of the theorem. By separability arguments it suffices to prove that for any polytope $K$ with rational vertices and no boundary points in $D_{\frac{1}{\sup A}}$, in a residual subset of $\Omega$ there exists a suitable sequence $t_n\to\infty$ with $\frac{B(t_n)}{t_n}\to K$. Denote the set of configurations for which it does not hold by $F(K)$. Then by the definition of convergence, $F(K)$ is expressible as a countable union as follows:
\begin{displaymath}
F(K)=\bigcup_{i=1}^{\infty}\bigcup_{l=1}^{\infty}F\left(K,\frac{1}{i},l\right),
\end{displaymath}
where $F\left(K,\delta,\mu_0\right)$ stands for the set of configurations in which for any $\mu>\mu_0$ we have
\begin{displaymath}
d_H\left(K,\frac{\tilde{B}(\mu)}{\mu}\right)>\delta.
\end{displaymath}
Consequently, verifying that $F\left(K,\delta,l\right)$ is nowhere dense for each $\delta,l$ would conclude the proof. Clearly it suffices to prove it for sufficiently small $\delta$ and sufficiently large $l$. Having this purpose in mind fix a cylinder set $U$ in $\Omega$ with nontrivial projections $U_{e_1}, ... U_{e_j}$. We try to find a smaller cylinder set $V=V_t$ such that it is disjoint from $F\left(K,\delta,l\right)$. Now choose $l$ so large that the edges $\frac{e_i}{l}\subseteq D_{\varepsilon}$. Consider the function $\tilde{g}$ defined in the first step of the proof for some $M>l$ and $\varepsilon$ to be fixed later. This function is constant by definition on any edge of $\frac{\mathbb{Z}^d}{Mt}$ for $t\in\mathbb{N}$, denote this value by $\tilde{g}\left(\frac{e}{Mt}\right)$. Now for any edge $e\notin\{e_1,...,e_j\}$, but intersecting $D_{\frac{Mt}{\inf A}}$, we will define $V_{t,e}=(\tilde{g}\left(\frac{e}{Mt}\right)-\varepsilon_e, \tilde{g}\left(\frac{e}{Mt}\right)+\varepsilon_e)\cap A$, where the sum of these $\varepsilon_e$s is at most $\varepsilon\inf A$. Let $V_t$ be defined by these projections and consider any configuration $\omega$ in it. As in the proof of Theorem 3.3, the passage times in this configuration determine a function $f_{t,\omega}$ by rescaling, such that the Hausdorff distance of $\frac{B(Mt)}{Mt}$ and $B_{f_{t,\omega}}$ converges to 0 as $t\to\infty$. (Here we use the fact we noted there that $B_{f_{t,\omega}}$ depends only on the values of $f_{t,\omega}$ in $D_{\frac{1}{\inf A}}$). However, by definition the function $f_{t,\omega}$ is almost the same as $\tilde{g}$ in $D_{\frac{1}{\inf A}}$: the only differences arise due to the existence of the edges $e_1,...,e_j$ and the error term $\varepsilon_e$ for each edge. However, these minor differences cannot imply a significant deviation of the integral on any relevant path in the definition of the sets $B_{\tilde{g}}$ and $B_{f_{t,\omega}}$: as relevant topological paths has $\ell_1$-length at most $\frac{1}{\inf A}$ the error terms may not yield a difference larger than $\varepsilon$ in the integral of $f_{t,\omega}$ and $\tilde{g}$. On the other hand, the edges $\frac{e_1}{l},...,\frac{e_j}{l}$ are all in $D_{\varepsilon}$, which is a set with $\ell_1$-diameter $2\varepsilon$, hence they cannot yield a difference larger than $2\varepsilon(\sup A - \inf A)$. As depending on the choice of $\varepsilon$ all these quantities can be arbitrarily small, we have that the Hausdorff distance of $B_{\tilde{g}}$ and $B_{f_{t,\omega}}$ can be arbitrarily small. Consequently, for small enough $\varepsilon$, and large enough $t$ we have $d_H(B_{\tilde{g}},\frac{B(Mt)}{Mt})<\frac{\delta}{2}$. But for small enough $\varepsilon$ we also have $d_H(B_{\tilde{g}}, K)<\frac{\delta}{2}$. That is, we have by triangle inequality
\begin{displaymath}
d_H\left(K,\frac{\tilde{B}(Mt)}{Mt}\right)<\delta,
\end{displaymath}
for some $Mt>l$. It means that for large enough $t$ we have that $V_t$ is necessarily disjoint from $F\left(K,\delta,l\right)$, which yields that this latter set is nowhere dense. We have seen that it concludes the proof. \end{proof}

\section{Hilbert percolation -- strongly positively dependent case}

In this section, we will restrict our observations to finite dimensional real Hilbert spaces, i.e. $\mathcal{H}=\mathbb{R}^k$.

\begin{defi}
Let $H\subseteq\mathcal{H}$. The convex cone generated by $H$ is the smallest set $\cone(H)\subseteq\mathcal{H}$ which is closed under linear combinations with nonnegative coefficients. The closed convex cone generated by $H$ is the closure of $\cone(H)$ which we denote by $\overline{\cone(H)}$.
\end{defi}

\begin{defi} 
Let $H\subseteq\mathcal{H}$. We say that $H$ is strongly positively dependent if for any $x\in{H}$ we have $-x\in\overline{\cone(H)}$.
\end{defi}

At first sight, one might believe that if $A$ is strongly positively dependent then the passage time between any two points is 0 for a generic configuration. However, a very simple counterexample refutes this idea: if $d=1$ and $A=\{-1,1\}\subseteq{\mathbb{R}}$ for example, then clearly the passage time between integers of different parity is odd, thus cannot be 0. In general we must point out that we do not have paths of any $\ell_1$-length between fixed lattice points, which causes technical inconveniences. We can only construct paths which have length with the same parity as $|x-y|$. Motivated by this remark it is useful to introduce the notation $M(A)$ for the submonoid in $\overline{\cone(A)}$ which contains those linear combinations with nonnegative integer coefficients in which the sum of coefficients is even.

The following theorem displays that in the generic case the model gets trivial if $A$ is strongly positively dependent and we have enough degree of freedom, that is $d\geq{2}$.

\begin{theorem} Let $\mathcal{H}=\mathbb{R}^k$ and assume that $A\subseteq\mathcal{H}$ is bounded and strongly positively dependent, moreover, let $d\geq{2}$. Then in a residual subset of $\Omega$ we have that $\overline{S(x,y)}=\overline{M(A)}$ for any $x,y\in\mathbb{Z}^d$ for which $|x-y|$ is even, and $\overline{S(x,y)}=\overline{M(A)+A}$ for any $x,y\in\mathbb{Z}^d$ for which $|x-y|$ is odd. \end{theorem}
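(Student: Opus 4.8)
The plan is to prove the two inclusions separately: the inclusion $\overline{S(x,y)}\subseteq\overline{M(A)}$ (resp. $\overline{M(A)+A}$) holds in every configuration, and the reverse holds on a residual set obtained as a countable intersection of dense open sets. For the easy inclusion, note that a graph path $\Gamma$ from $x$ to $y$ has a number of edges of the same parity as $|x-y|$, and $v_\omega(\Gamma)$ is a sum of exactly that many elements of $A$; hence $S(x,y)\subseteq M(A)$ when $|x-y|$ is even and $S(x,y)\subseteq M(A)+A$ when $|x-y|$ is odd, in every $\omega$, and taking closures gives one containment.

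For the reverse, fix a countable dense subset $D$ of $M(A)$ (say the even-length sums of members of a countable dense subset of $A$) and, for the odd case, a countable dense $D'\subseteq M(A)+A$. For lattice points $x,y$ with $|x-y|$ even, $v\in D$ and rational $\varepsilon>0$, let $G(x,y,v,\varepsilon)$ be the set of configurations admitting a graph path $\Gamma\colon x\to y$ with $\|v_\omega(\Gamma)-v\|<\varepsilon$, and define the analogous sets in the odd case. Since $\omega\mapsto v_\omega(\Gamma)$ is continuous for a fixed $\Gamma$, each $G(x,y,v,\varepsilon)$ is a union of open sets and hence open, and a configuration lying in all of them satisfies $\overline{S(x,y)}\supseteq\overline D=\overline{M(A)}$ (resp. $\supseteq\overline{M(A)+A}$) for every admissible pair. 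So the whole task reduces to showing that each $G(x,y,v,\varepsilon)$ is dense.

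The structural input is a consequence of strong positive dependence in finite dimension. The closed convex cone $\overline{\cone(A)}$ contains $A$ and, by hypothesis, $-A$, hence it contains $L:=\operatorname{span}(A)$; being also contained in $L$, it equals $L$. A convex set dense in a finite-dimensional space is the whole space, so in fact $\cone(A)=L$. Using this together with the compactness of $\overline A$ (Carathéodory, to keep the number of summands bounded) and an equidistribution-and-pigeonhole argument to absorb denominators and rounding errors, I would show that $\overline{M(A)}$ is a closed subgroup of $\mathcal H$ and $\overline{M(A)+A}$ a finite union of its cosets; in particular $-2a\in\overline{M(A)}$ for every $a\in A$, one has $v-w\in\overline{M(A)}$ whenever $v\in M(A)$ and $w$ is a sum of boundedly many elements of $A$, and there exist arbitrarily long even \emph{neutral tuples} (multisets from $A$ with sum within any prescribed distance of $0$). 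This number-theoretic statement — reconciling the rational cone representations with the parity constraint and with a vanishing rounding error — is the step I expect to be the main obstacle; everything else is routing and bookkeeping in the style of the author's earlier papers.

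Granting this, density of $G(x,y,v,\varepsilon)$ at a cylinder $U$ with finitely many nontrivial projections over an edge set $E_U$ is routine. Fix $\phi(e)\in U_e\cap A$ for $e\in E_U$ and set $\omega_e=\phi(e)$ there. Since $d\ge2$, the graph $\mathbb Z^d$ stays connected after deleting $E_U$ and the finitely many vertices all of whose incident edges lie in $E_U$, so one can route a path from $x$, out of that finite "trapped" set, through the free part of $\mathbb Z^d$, and into $y$, using only a bounded number $j$ of forced edges whose total passage vector $w$ is bounded in norm, while on the free edges arbitrary values in $A$ may be prescribed and the length of the free portion may be taken to be any sufficiently large integer of the correct parity. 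By the structural fact, $v-w$ (resp. $(v+a_0)-w$ in the odd case, $a_0\in A$ being the distinguished summand) can be written, up to error $<\varepsilon$, as a sum of $p$ elements of $A$ with $p$ of the required parity, and this representation can then be padded by neutral tuples to the length of the free portion; assigning these elements to the free edges yields a configuration in $U\cap G(x,y,v,\varepsilon)$. This proves density and hence the theorem.
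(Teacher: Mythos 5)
Your overall architecture is the paper's: the inclusion $S(x,y)\subseteq M(A)$ (resp.\ $M(A)+A$) by parity counting, the reduction via a countable dense set of targets to showing that each ``target approximable'' set is open and dense, and then, on a given cylinder $U$, pinning the uncontrollable edge values and cancelling their total contribution by appending a long, even, prescribed block of edges whose values approximate a suitable nonnegative combination of elements of $A$. The difference in routing (you avoid $E_U$ except for boundedly many forced edges with pinned values $w$; the paper instead pins \emph{all} edges of a large box $K$, sends an edge-simple path through it with total vector $\alpha$, and attaches a cancelling cycle at a boundary vertex) is cosmetic. The problem is that the one step where the paper actually works is exactly the step you defer: you write that you ``would show'' that $\overline{M(A)}$ is a closed subgroup, that $v-w$ lies in the right coset, and that long even neutral tuples exist, calling this the main obstacle --- and indeed it is the theorem's core. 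The paper proves it as follows: since $\alpha\in\cone(A)$ and $A$ is strongly positively dependent, $-\alpha\in\overline{\cone(A)}$; Carath\'eodory gives $\beta=\sum_{i=1}^{k}r_ia_i$ with $\|\beta+\alpha\|<\varepsilon/(8Q)$; the \emph{simultaneous} Dirichlet approximation theorem gives integers $p_i\ge 0$ and $1\le q\le Q$ with $|p_i/q-r_i|<1/(qQ^{1/k})$, whence $\bigl\|\sum 2p_ia_i+2q\alpha\bigr\|<\varepsilon/4+2Q^{-1/k}\sum\|a_i\|$, which is small because $A$ is bounded. Note the parity bookkeeping: $q$ is \emph{not} controlled mod $2$, so everything is doubled ($2p_i$ copies of $a_i$, and the forced contribution is made to occur $2q$ times by replicating its pinned values on $2q-1$ fresh blocks of edges); your sketch acknowledges the parity constraint but does not supply this device, and without it your ``closed subgroup''/neutral-tuple claims are unsupported. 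In short, the proposal is a correct frame around an unproved central lemma, not a proof.

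Two smaller points. Your connectivity claim is false as stated: deleting $E_U$ together with the vertices all of whose incident edges lie in $E_U$ need not leave $\mathbb{Z}^d$ connected, since finitely many edges can cut off a finite component none of whose vertices is isolated (e.g.\ in $\mathbb{Z}^2$ remove the twelve edges leaving the set $\{v\}\cup N(v)$). This is harmless --- you only need a route from $x$ and $y$ to the infinite component of the complement of $E_U$ using finitely many forced edges whose values you pin when shrinking the cylinder --- but it should be stated that way. Also, the assertion that $\overline{M(A)+A}$ is a \emph{finite} union of cosets of $\overline{M(A)}$ is dubious in general (the cosets $a+\overline{M(A)}$, $a\in A$, need not be finite in number) and is not needed: it suffices that the target $v+a_0$ with $v\in M(A)$, $a_0\in A$ be realized by the same cancellation argument after reserving one edge for a value near $a_0$, exactly as the paper does in the odd case.
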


\begin{kov} In the setting of Theorem 4.3, in a residual subset of $\Omega$ we have $T(x,y)=0$ for any $x,y\in\mathbb{Z}^d$ for which $|x-y|$ is even, while $T(x,y)=\inf\{\|b\|_{\mathcal{H}}: b=m+a, m\in M(A), a\in A\}$ for any $x,y\in\mathbb{Z}^d$ for which $|x-y|$ is odd. \end{kov}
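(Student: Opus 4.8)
The plan is to read the corollary off Theorem 4.3 together with the definition of $T$ and the continuity of the Hilbert norm; there is essentially nothing further to prove. First I would recall that, by definition, $T(x,y)=\inf\{\|v\|_{\mathcal H}:v\in S(x,y)\}$ for all $x,y\in\mathbb Z^d$. The first step is the elementary observation that for any subset $S\subseteq\mathcal H$ one has $\inf\{\|v\|_{\mathcal H}:v\in S\}=\inf\{\|v\|_{\mathcal H}:v\in\overline S\}$: the inequality $\le$ is clear because $S\subseteq\overline S$, and the reverse holds since every $w\in\overline S$ is a limit of points of $S$ and $\|\cdot\|_{\mathcal H}$ is continuous. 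Applying this with $S=S(x,y)$ rewrites the defining formula as $T(x,y)=\inf\{\|v\|_{\mathcal H}:v\in\overline{S(x,y)}\}$.

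Next I would fix the residual subset $R\subseteq\Omega$ furnished by Theorem 4.3, on which $\overline{S(x,y)}=\overline{M(A)}$ whenever $|x-y|$ is even and $\overline{S(x,y)}=\overline{M(A)+A}$ whenever $|x-y|$ is odd; since Theorem 4.3 provides one residual set valid for all pairs simultaneously, $R$ witnesses both clauses of the corollary at once. On $R$, in the even case we get $T(x,y)=\inf\{\|v\|_{\mathcal H}:v\in\overline{M(A)}\}=\inf\{\|v\|_{\mathcal H}:v\in M(A)\}$ by the continuity remark again, and this infimum equals $0$ because $0\in M(A)$ (as $M(A)$ is a submonoid, it contains its neutral element $0$; equivalently, the empty combination has coefficient sum $0$, which is even). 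In the odd case the same remark gives $T(x,y)=\inf\{\|v\|_{\mathcal H}:v\in M(A)+A\}=\inf\{\|m+a\|_{\mathcal H}:m\in M(A),\,a\in A\}$, which is exactly the asserted value.

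I do not expect any genuine obstacle: the whole content is carried by Theorem 4.3, and the corollary merely repackages it as a statement about the pseudometric $T$ rather than about the closures of the passage sets $S(x,y)$. The only points worth a sentence of justification are the interchange of infimum with closure (continuity of $\|\cdot\|_{\mathcal H}$) and the membership $0\in M(A)$; both are immediate, so the proof should occupy only a short paragraph.
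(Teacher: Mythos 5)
Your derivation is correct and is exactly the reading-off the paper intends: Corollary 4.4 is stated without proof as an immediate consequence of Theorem 4.3, and your only added observations (infimum of the norm is unchanged under closure, and $0\in M(A)$ since the empty/zero combination has even coefficient sum) are precisely what makes it immediate. Nothing is missing.
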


\begin{megj}
If $d=1$, it is easy to construct counterexamples to the theorem and to the corollary, due to the fact that for given $x,y$ and $e\in E$ the parity of the times a path $\Gamma$ from $x$ to $y$ crosses $e$ is completely determined. Consequently, if for example $A=\{-1,0,1\}$, while $\tau([0,1])=1$ and $\tau([1,2])=0$, then the passage time of any $\Gamma$ from $0$ to $2$ is odd, thus $T(0,2)\geq 1$ for any configuration, while it should be 0 generically according to Corollary 4.4 as $A$ is strongly positively dependent.
\end{megj}

\begin{proof}[Proof of Theorem 4.3]

Let us consider points with even $\ell_1$-distance, the other case will simply follow from that. The $\overline{S(x,y)}\subseteq\overline{M(A)}$ containment clearly holds as all the passage vectors between points even distance apart are in $M(A)$. Hence  it suffices to verify the other containment which follows from $M(A)\subseteq\overline{S(x,y)}$. For technical purposes choose a countable dense subset $M_0\subseteq M(A)$. Then we can reduce the problem to the question whether $M_0\subseteq\overline{S(x,y)}$ holds. As there are countably many pairs $x,y$ and $M_0$ is also countable, it is enough to show that for any fixed $m\in{M_0}$ and pair $x,y$ we have that $m\in\overline{S(x,y)}$ apart from a nowhere dense set.

In order to verify this claim fix a cylinder set $U\subseteq{\Omega}$ and $n\in\mathbb{N}$. It is clearly sufficient to find a smaller cylinder set $V\subseteq U$ such that for any configuration in $V$ there exists $s\in S(x,y)$ such that $\|m-s\|_{\mathcal{H}}<\frac{1}{n}:=\varepsilon$. Let us denote the set of edges belonging to nontrivial projections of $U$ by $E_U=\{e_1, e_2, ..., e_N\}$. Fix now a large hypercube $K$ centered at the origin which contains all the edges in $E_U$ and also $x$ and $y$. Now we can define a cylinder set $U'$ which has very small projections to the edges in $K$. More precisely, we require these projections to have sufficiently small diameter to guarantee that in $U'$ the sum of passage vectors on these edges is in an open set of diameter $\frac{\varepsilon}{2}$, regardless of which configuration we consider. Choose now a vertex $z$ on $\partial{K}$ and let $\Gamma$ be a path crossing each of its edges precisely once from $x$ to $y$ such that it does not leave $K$ but contains $z$ once. The point $z$ cuts it into two parts $\Gamma_1,\Gamma_2$. By the previous note about the diameters, we have that the passage vector of $\Gamma$ is in an open set of diameter $\frac{\varepsilon}{2}$. Let us denote one of these passage vectors by $\alpha$ for the sake of specificity, all the others that may arise in another configuration have distance at most $\frac{\varepsilon}{2}$ from it. Our aim is to define nontrivial projections on further edges of a cycle $\Gamma'$ starting from $z$ such that the passage vector of $\Gamma^*=\Gamma_1\cup\Gamma'\cup\Gamma_2$ is in a neighborhood of $m$ with radius $\varepsilon$, which would conclude the proof of the first part. As $z$ is a vertex, we will be able to choose $\Gamma'$ so that it does not contain any of its edges twice and $\Gamma'\cap K = \{z\}$.

As $\alpha$ is a passage vector between $x$ and $y$ for some configuration, we clearly have $\alpha\in\cone(A)$. As $A$ is strongly positively dependent, it clearly implies $-\alpha\in\overline{\cone(A)}$. Hence for any $Q\in\mathbb{N}$ there exists $\beta\in\cone(A)$ such that 
\begin{equation}
\|\beta-(-\alpha)\|_{\mathcal{H}}<\frac{\varepsilon}{8Q}=\varepsilon^*,
\end{equation} 
where $Q$ is to be fixed later. By a simple consequence of Carathéodory's theorem about convex hulls we have that
\begin{equation}
\beta=\sum_{i=1}^{k}r_i a_i,
\end{equation}
where each coefficient $r_i>0$, while $a_i\in{A}$. By (19), we can rewrite (18) as
\begin{equation}
\left\|\sum_{i=1}^{k}r_i a_i+\alpha\right\|_{\mathcal{H}}<\varepsilon^*.
\end{equation} 
By the simultaneous version of Dirichlet's approximation theorem we can choose $p_1,...,p_k\geq{0}$ integers and $1\leq{q}\leq{Q}$ such that
\begin{equation}
\left|\frac{p_i}{q}-r_i\right|<\frac{1}{qQ^\frac{1}{k}}.
\end{equation}
Using (21) and the triangle inequality we can rewrite the estimate in (20) as
\begin{equation}
\left\|\sum_{i=1}^{k}\frac{p_i}{q} a_i+\alpha\right\|_{\mathcal{H}}<\varepsilon^*+\frac{1}{qQ^\frac{1}{k}}\sum_{i=1}^{k}\|a_i\|_{\mathcal{H}}.
\end{equation} 
Multiplying by $2q$ and using $q\leq{Q}$ and the definition of $\varepsilon^*$ yields
\begin{equation}
\left\|\sum_{i=1}^{k}2p_i a_i+2q\alpha\right\|_{\mathcal{H}}<\frac{\varepsilon}{4}+\frac{2}{Q^\frac{1}{k}}\sum_{i=1}^{k}\|a_i\|_{\mathcal{H}}<\frac{\varepsilon}{3}
\end{equation}
for well-chosen $Q$, as $A$, and hence $\sum_{i=1}^{k}\|a_i\|_{\mathcal{H}}$ is bounded.

As $m\in{M_0}\subseteq{M(A)}$, by definition it is expressible as an even sum of elements in $A$, that is $m=\sum_{j=1}^{2l}a_{m(j)}$, where each $a_{m(j)}$ is in $A$. Now choose the cycle $\Gamma'$ starting from $z$ such that it does not hit $K$ until eventually returning to $z$, and contains all its edges precisely once. Moreover, $|\Gamma'|=2l+\sum_{i=1}^{k}2p_i+(2q-1)|\Gamma|$. (It is an even number, so it can be done.) On this cycle we can define nontrivial projections the following way: on the first $2l$ edges, we define nontrivial projections centered at each $a_{m(j)}$ respectively with sufficiently small diameter to be precised later. On the next $2p_1,2p_2,...,2p_k$ edges we define nontrivial projections centered at $a_1,a_2,...a_k$ respectively with sufficiently small diameter again. Finally, we think of the last $(2q-1)|\Gamma|$ edges as $2q-1$ consecutive copies of $\Gamma$, that is we define nontrivial projections as sufficiently small open subsets of the projections belonging to the corresponding edges of $\Gamma$. These projections together with the ones in the definition of $U'$ define $V$. If we choose the above neighborhoods small enough, we can guarantee that the passage vector of $\Gamma'$ for any configuration has distance at most $\frac{\varepsilon}{6}$ from $m+\sum_{i=1}^{k}2p_i a_i+(2q-1)\alpha$. As a consequence, by (23) and triangle inequalities we can deduce
\begin{equation}
\|v(\Gamma_1\cup\Gamma'\cup\Gamma_2)-m\|_{\mathcal{H}}<\left\|\sum_{i=1}^{k}2p_i a_i+2q\alpha\right\|_{\mathcal{H}}+\frac{\varepsilon}{6}+\frac{\varepsilon}{2}<\varepsilon
\end{equation}
for any configuration in $V$, which concludes the case when $|x-y|$ is even with the choice $s=v(\Gamma_1\cup\Gamma'\cup\Gamma_2)\in S(x,y)$.

In the other case the previous argument might be copied with one essential change. In this case we want to have passage vectors near vectors of the type $m+a\in M(A)+A$. Given this, upon defining $\Gamma$ and the projections to the edges of $K$, we will proceed the same way as previously, except for this time we will separate an edge $e\in\Gamma\setminus E_U$ and on that we will define the projection of $V$ to be a very small neighborhood of $a$. Apart from this edge, $\Gamma$ uses an even number of edges, thus we can define $\Gamma'$ and $V$ as previously in order to have that $v(\Gamma_1\cup\Gamma'\cup\Gamma_2)-a$ is very close $m$ in $V$. Consequently, $v(\Gamma_1\cup\Gamma'\cup\Gamma_2)$ is very close to $m+a$. The technical details are left to the reader.\end{proof}

It would be nice to say something about how common are the strongly positively dependent sets for example amongst the compact sets equipped with the Hausdorff metric, whose family shall be denoted by $\mathcal{K}^d$. This is the aim of the following proposition, which roughly states that the set of such $A$s is not too small, but not too large either:

\begin{prop}
The set of strongly positively dependent compact sets contains nontrivial open sets in $\mathcal{K}^d$, and so does its complement.
\end{prop}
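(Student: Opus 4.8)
The claim has two halves: (i) there is a nonempty open set of compact sets that are strongly positively dependent, and (ii) there is a nonempty open set of compact sets that are not. For (i), the plan is to exhibit one ``robustly'' strongly positively dependent compact set and show that the property survives small Hausdorff perturbations. The cleanest candidate is $A=\overline{B(0,1)}$, the closed unit ball of $\mathcal{H}=\mathbb{R}^k$ (or, if one wants a set not containing $0$, a thin closed spherical shell $\{x:1\le\|x\|\le 1+\eta\}$, or the unit sphere with a bit of thickening). For such an $A$ we have $\overline{\cone(A)}=\mathcal{H}$, so trivially $-x\in\overline{\cone(A)}$ for every $x\in A$. Now suppose $A'$ is compact with $d_H(A,A')<\delta$ for small $\delta$. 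Every $x\in A'$ lies within $\delta$ of some point of $A$; in particular $A'$ contains points in many different directions, and one checks that for $\delta$ small the set $A'$ still positively spans $\mathcal{H}$, i.e. $\cone(A')=\mathcal{H}$. (Concretely: pick finitely many points $y_1,\dots,y_m\in A$ whose positive hull is all of $\mathbb{R}^k$ and which is \emph{stable}, meaning the $y_i$ can be wiggled by $\delta$ and still positively span; then $A'$ contains points $y_i'$ with $\|y_i-y_i'\|<\delta$, and these still positively span.) Hence $\overline{\cone(A')}=\mathcal{H}\supseteq -A'$, so $A'$ is strongly positively dependent. This gives a Hausdorff-open neighborhood of $A$ consisting of strongly positively dependent sets.

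For (ii), the plan is to find a compact set that fails the property \emph{robustly}. Take $A$ to be a small closed ball around a single point with positive first coordinate, say $A=\{x\in\mathbb{R}^k:\|x-e_1\|\le \tfrac12\}$. Then every $x\in A$ has $\langle x,e_1\rangle\ge \tfrac12$, so $\cone(A)$ — and hence $\overline{\cone(A)}$ — is contained in the closed halfspace $\{x:\langle x,e_1\rangle\ge 0\}$. But for $x\in A$ we have $\langle -x,e_1\rangle\le -\tfrac12<0$, so $-x\notin\overline{\cone(A)}$, and $A$ is not strongly positively dependent. This failure is stable: if $d_H(A,A')<\delta$ with $\delta<\tfrac14$ say, then every $x'\in A'$ still satisfies $\langle x',e_1\rangle\ge\tfrac14$, so $\overline{\cone(A')}\subseteq\{x:\langle x,e_1\rangle\ge 0\}$, while any $x'\in A'$ gives $-x'$ with negative first coordinate, so again $-x'\notin\overline{\cone(A')}$. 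Thus a whole Hausdorff-ball around this $A$ lies in the complement.

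**Main obstacle.** The only genuinely nontrivial point is the stability claim in part (i): that positively spanning $\mathbb{R}^k$ is an open condition in the Hausdorff metric. The subtlety is that $A$ might positively span $\mathbb{R}^k$ only ``barely'', using points arbitrarily close to a bounding hyperplane, in which case an arbitrarily small perturbation could destroy the spanning property — so one cannot take an arbitrary strongly positively dependent $A$; one must \emph{choose} a robust one. The fix is exactly to start from a set like the full unit ball (or a shell), where there is a uniform positive-spanning ``margin'': there exist $y_1,\dots,y_{k+1}\in A$ such that $0$ lies in the \emph{interior} of $\conv\{y_1,\dots,y_{k+1}\}$, and this interior-membership is stable under moving each $y_i$ by less than some $\delta_0>0$. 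Then for $d_H(A,A')<\delta_0$ we extract nearby $y_i'\in A'$, still have $0\in\inte\conv\{y_1',\dots,y_{k+1}'\}$, and conclude $\cone(A')=\mathbb{R}^k$. Once this lemma is in place both halves of the proposition follow immediately, and one should remark that in both constructions the exhibited open sets are genuinely nonempty (the centers $A$ are compact, hence actual points of $\mathcal{K}^d$).
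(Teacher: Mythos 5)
Your proposal is correct and follows essentially the same route as the paper: for the complement you take a small Hausdorff ball around a set whose elements all have first coordinate bounded away from $0$ (the paper uses a neighborhood of $\{\xi_1\}$), and for the strongly positively dependent side you reduce to the robust condition that $0$ lies in the interior of the convex hull of finitely many points of $A$ and note its stability under small perturbations of those points, exactly as the paper does with $A=\{\pm\xi_1,\dots,\pm\xi_k\}$. The only difference is your choice of witness (the closed unit ball rather than the finite symmetric set), which is immaterial.
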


\begin{proof}

For the complement it is very simple to verify the claim: we can consider the ball of radius $\frac{1}{2}$ centered at the singleton $\{\xi_1\}$. It is good indeed as any set $A$ in this neighborhood exclusively contains vectors with positive first coordinate.

For the set of strongly positively dependent compact sets, our construction relies on the following remark: if the convex hull $\conv(A)$ contains $0$ in its interior, then $A$ is strongly positively dependent. Indeed, in this case for any $a\in A$ we have that $\lambda (-a)\in \conv(A)$ for sufficiently small $\lambda>0$. Consequently, $\lambda (-a)$ can be written as a finite linear combination of elements of $A$ with positive coefficients, which yields that $-a\in \cone(A)$, as stated.

Now consider $A=\{\pm\xi_1,\pm\xi_2,...,\pm\xi_k\}$ in $\mathcal{K}^d$. Then $\conv(A)$ contains $0$ in its interior, as it contains $D_1$, the unit ball centered at the origin in the $\ell_1$-metric. In other words, the distance of $0$ from $\partial(\conv(A))$ is 1, and $0$ is contained by $\conv(A)$. Now consider a small neighborhood $G_A$ of $A$ in $\mathcal{K}^d$ and an element $K$ of it. We would like to show that for a sufficiently small neighborhood we have that $0$ is in the interior of $\conv(K)$. We know that $K$ contains at least one point very close to each $\pm\xi_i$, and if $G_A$ is small enough, these points must be distinct. If we replace $K$ by a subset of it formed by $2k$ such points, we shrink $\conv(K)$, hence it would be sufficient to verify our claim for the convex hulls of such finite sets. But such a convex hull is a polytope, which itself and whose boundary is a continuous function of the vertices. Consequently, as we know that the distance of $0$ from $\partial(\conv(A))$ is 1, and $0$ is in $A$, we have that the distance of $0$ from $\partial(\conv(K))$ is also positive and $0$ is in $K$ if $K$ is chosen from a sufficiently small neighborhood $G_A$. Thus the set of strongly positively dependent compact sets contains nontrivial open sets in $\mathcal{K}^d$, indeed. \end{proof}

\section{Optimal paths and geodesics in Hilbert percolation}

In the original setup we called a path geodesic if its passage time equals the passage time between its endpoints, which was appropriate in the sense that subpaths of geodesics were also of minimal length. However, as even very simple examples might display, it is not the case anymore: for instance, let $A=\{-1,0,1\}$ and $d=2$, and consider the configuration in which we have two neighboring edges with passage vectors -1 and 1 respectively, while the passage vector of all other edges is 0. In this case, the passage time of the path of these two edges is 0, hence it is optimal, while its subpaths of length 1 have passage time 1. However, between any two points the passage time is 0, hence these subpaths are not optimal. It motivates a separation of definitions:

\begin{defi}
A path in $\mathbb{Z}^d$ is an optimal path, if its passage time equals the passage time between its endpoints. Moreover, a path is a geodesic, if all of its subpaths are optimal.
\end{defi}

If we would like to have a somewhat tame geometry, it is natural to expect from the model that optimal paths are not self intersecting, and in general longer paths have higher passage time. The following proposition gives a necessary and sufficient condition guaranteeing this property. (We denote by $(a,b)$ the inner product of $a,b\in\mathcal{H}$).

\begin{lemma}
We have $\tau(\Gamma_1)\leq\tau(\Gamma_2)$ for each paths $\Gamma_1\subseteq \Gamma_2$ and each configuration if and only if for any $a,b\in A$ we have $(a,b)\geq{0}$. In this case, we call $A$ positive.
\end{lemma}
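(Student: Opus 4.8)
The plan is to prove both directions by working at the level of individual edges, exploiting the fact that the passage vector of a path is a sum of vectors from $A$, and that the squared norm of such a sum expands into a sum of pairwise inner products.

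For the "if" direction, suppose $(a,b)\geq 0$ for all $a,b\in A$. Take $\Gamma_1\subseteq\Gamma_2$ and fix any configuration. Write $v(\Gamma_2)=v(\Gamma_1)+w$, where $w=\sum_{e\in\Gamma_2\setminus\Gamma_1}v(e)$ is the sum of passage vectors of the edges of $\Gamma_2$ not in $\Gamma_1$ (counted with multiplicity, since a path may traverse an edge several times). Then
\begin{displaymath}
\tau(\Gamma_2)^2=\|v(\Gamma_1)+w\|_{\mathcal{H}}^2=\|v(\Gamma_1)\|_{\mathcal{H}}^2+2(v(\Gamma_1),w)+\|w\|_{\mathcal{H}}^2.
\end{displaymath}
Now $v(\Gamma_1)$ and $w$ are both finite nonnegative-integer combinations of elements of $A$, so $(v(\Gamma_1),w)$ is a nonnegative-integer combination of the inner products $(a,b)$ with $a,b\in A$, hence $\geq 0$ by hypothesis. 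Therefore $\tau(\Gamma_2)^2\geq\|v(\Gamma_1)\|_{\mathcal{H}}^2=\tau(\Gamma_1)^2$, and since passage times are nonnegative, $\tau(\Gamma_1)\leq\tau(\Gamma_2)$.

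For the "only if" direction, suppose the monotonicity property holds for every configuration, and fix $a,b\in A$; we want $(a,b)\geq 0$. Using $d\geq 2$ (so that we have room for the following paths), pick a vertex with two distinct incident edges $e_1,e_2$ sharing that vertex and forming a path $\Gamma_2$ of length $2$, and let $\Gamma_1$ be the subpath consisting of $e_1$ alone. Choose a configuration with $v(e_1)=a$, $v(e_2)=b$, and arbitrary values elsewhere. Then $\tau(\Gamma_1)=\|a\|_{\mathcal{H}}$ and $\tau(\Gamma_2)=\|a+b\|_{\mathcal{H}}$, so the hypothesis gives $\|a\|_{\mathcal{H}}^2\leq\|a+b\|_{\mathcal{H}}^2=\|a\|_{\mathcal{H}}^2+2(a,b)+\|b\|_{\mathcal{H}}^2$, i.e. $2(a,b)\geq-\|b\|_{\mathcal{H}}^2$. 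This alone is not enough, so I would iterate: replace $\Gamma_2$ by a path that starts with $e_1$ and then traverses $n$ further edges all carrying passage vector $b$ (for instance a path that goes back and forth along $e_2$, or a longer self-avoiding continuation, which exists since $d\geq 2$). Then $v(\Gamma_2)=a+nb$ and the same computation yields $0\leq 2n(a,b)+n^2\|b\|_{\mathcal{H}}^2$, i.e. $(a,b)\geq-\tfrac{n}{2}\|b\|_{\mathcal{H}}^2$ — wrong direction. Instead, the correct iteration is to make $\Gamma_1$ long: let $\Gamma_1$ consist of $n$ edges each carrying $a$, and let $\Gamma_2$ be $\Gamma_1$ followed by one edge carrying $b$. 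Then $n^2\|a\|_{\mathcal{H}}^2=\tau(\Gamma_1)^2\leq\tau(\Gamma_2)^2=n^2\|a\|_{\mathcal{H}}^2+2n(a,b)+\|b\|_{\mathcal{H}}^2$, giving $2n(a,b)\geq-\|b\|_{\mathcal{H}}^2$ for all $n\geq 1$; letting $n\to\infty$ forces $(a,b)\geq 0$.

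The one point requiring care — and the main (mild) obstacle — is the combinatorial realizability of these paths in the lattice $\mathbb{Z}^d$ with $d\geq 2$: one must check that there is a path whose first $n$ edges are prescribed arbitrarily (here, all assigned the same vector $a$ via the configuration) and which then continues with one more edge whose value we set to $b$. Since distinct edges can be assigned distinct values independently in $\Omega$, it suffices to exhibit a self-avoiding (or at least edge-non-repeating) lattice path of the required combinatorial shape, which is elementary for $d\geq 2$; for $d=1$ the statement is vacuous in spirit but still holds by the same edge-by-edge argument, noting one can always take $\Gamma_1$ a single edge and let it be a subpath of a longer $\Gamma_2$. Once realizability is granted, the rest is the norm expansion above. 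I would also remark that the same computation shows optimal paths are non-self-intersecting when $A$ is positive and $0\notin A$, tying back to the motivating discussion.
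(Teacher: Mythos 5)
Your proposal is correct and follows essentially the same route as the paper: the ``if'' direction is the same norm expansion $\tau(\Gamma_2)^2=\tau(\Gamma_1)^2+2(v(\Gamma_1),w)+\|w\|_{\mathcal{H}}^2$ (the paper merely adds the extra edges one at a time), and your ``only if'' direction uses exactly the paper's configuration of $n$ consecutive edges with vector $a$ followed by one edge with vector $b$, phrased as a limit in $n$ instead of choosing one large $n$ with $2n(a,b)+\|b\|_{\mathcal{H}}^2<0$ to reach a contradiction. The realizability worry is harmless, since a straight run of $n+1$ consecutive edges exists for every $d\geq 1$ and edge values can be prescribed independently.
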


\begin{proof}
First assume the existence of $a,b\in A$ with $(a,b)<0$. Choose $n\in\mathbb{N}$ so that $2n(a,b)+\|b\|_{\mathcal{H}}^2<0$. Now consider a configuration in which there are $n+1$ consecutive edges so that the passage time of the first $n$ is $a$, while the last one has passage time $b$. Let the first $n$ edges form $\Gamma_1$, and let the union of all these edges be $\Gamma_2$. Then the square of the passage time of $\Gamma_1$ is
\begin{displaymath}
\tau(\Gamma_1)^2=\|na\|_{\mathcal{H}}^2=n^2\|a\|_\mathcal{H}^2,
\end{displaymath}
which implies by the choice of $n$
\begin{displaymath}
\tau(\Gamma_2)^2=\|na+b\|_{\mathcal{H}}^2=n^2\|a\|_\mathcal{H}^2+2n(a,b)+\|b\|_\mathcal{H}^2<n^2\|a\|_\mathcal{H}^2=\tau(\Gamma_1)^2.
\end{displaymath}
This denies $\tau(\Gamma_1)\leq\tau(\Gamma_2)$, hence we proved one of the directions.

For the other direction assume that for any $a,b\in A$ we have $(a,b)\geq{0}$, and $\Gamma_1\subseteq \Gamma_2$. As we can add edges one by one, it suffices to prove the claim for $\Gamma_2=\Gamma_1\cup e$ for an edge $e$. Now the square of the passage time of $\Gamma_1$ in any configuration is
\begin{displaymath}
\tau(\Gamma_1)^2=\left\|\sum_{i=1}^{|\Gamma_1|}a_i\right\|_{\mathcal{H}}^2.
\end{displaymath}
for some $a_i\in A$, while the square of the passage time of $\Gamma_2$ is
\begin{displaymath}
\tau(\Gamma_2)^2=\left\|a^*+\sum_{i=1}^{|\Gamma_1|}a_i\right\|_{\mathcal{H}}^2=\|a^*\|_{\mathcal{H}}^2+2\sum_{i=1}^{|\Gamma_1|}(a,a_i)+\tau(\Gamma_1)^2\geq \tau(\Gamma_1)^2,
\end{displaymath}
where the last inequality holds by assumption. It concludes the proof. \end{proof}

Now if $A$ is not positive, then in a small neighborhood of the configuration constructed in the proof there exist paths $\Gamma_1\subseteq \Gamma_2$ with $\tau(\Gamma_1)>\tau(\Gamma_2)$. Thus if $\Omega$ is a Baire space, then we have $\tau(\Gamma_1)\leq\tau(\Gamma_2)$ in the generic case for each paths $\Gamma_1\subseteq \Gamma_2$ if and only if $A$ is positive. Thus in the following we will restrict ourselves to positive $A$s. A natural question is when we have that all the optimal paths are geodesics. If $d=1$, we clearly have this property as optimal paths are not self-intersecting, and if $d=1$ there is a unique path with no self-intersections between any two points. Things get interesting when $d\geq{2}$, however, we must realize that such $A$s are terrifyingly rare:

\begin{lemma}
Assume that $A$ is positive and $d\geq{2}$. We have that all the optimal paths are geodesics in all the configurations if and only if $A$ is contained by a ray, that is for any $a,b\in A$ we have $a=\lambda b$ or $b=\lambda a$ for some $\lambda\geq{0}$. Moreover, if this condition does not hold, there are configurations in which there are $x,y\in\mathbb{Z}^d$ such that there is no geodesic between $x$ and $y$ at all.
\end{lemma}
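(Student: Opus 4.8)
The plan is to prove the ``if'' direction and the ``moreover'' clause; the latter subsumes the ``only if'' direction, since in the configurations we build an optimal path from $x$ to $y$ exists (passage times of long paths diverge), and if no geodesic joins $x$ and $y$ then that optimal path is not a geodesic.

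\emph{If $A$ lies on a ray.} Say $A\subseteq\{tu:t\ge 0\}$ for a unit vector $u$. Then $v(e)=\tau(e)u$ for every edge, so $v(\Gamma)=\bigl(\sum_{e\in\Gamma}\tau(e)\bigr)u$ and $\tau(\Gamma)=\sum_{e\in\Gamma}\tau(e)$: passage times are additive along paths with nonnegative weights, exactly as in the ordinary model. Hence the classical argument applies: if $\Gamma$ is optimal from $x$ to $y$ with decomposition $\Gamma_1\,\Gamma'\,\Gamma_2$, then for any path $\sigma$ having the endpoints of $\Gamma'$ the walk $\Gamma_1\cup\sigma\cup\Gamma_2$ joins $x$ to $y$ with weight $\tau(\Gamma_1)+\tau(\sigma)+\tau(\Gamma_2)$, and deleting cycles from a walk cannot raise its weight, so $\tau(\Gamma)\le\tau(\Gamma_1)+\tau(\sigma)+\tau(\Gamma_2)$ and therefore $\tau(\Gamma')\le\tau(\sigma)$. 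Thus every subpath of an optimal path is optimal, i.e.\ every optimal path is a geodesic.

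\emph{If $A$ is not on a ray.} First pick $a,b\in A$ with neither a nonnegative multiple of the other; if they were linearly dependent we would have $a=\lambda b$ with $\lambda<0$ (the values $\lambda\ge0$ and $b=0$ being excluded), whence $(a,b)=\lambda\|b\|^2<0$, contradicting positivity --- so $a,b$ are linearly independent and $(a,b)\ge0$. Relabel so that $\|a\|\ge\|b\|$; in the main case $(a,b)<\|b\|^2$, which holds automatically when $\|a\|=\|b\|$ because strict Cauchy--Schwarz gives $(a,b)<\|a\|\|b\|=\|b\|^2$. Now adapt the skew box of the proof of Theorem~1.1, but in a single coordinate plane: take collinear lattice points $x,u,w,y$ on the $\xi_1$-axis with $w-u=n\xi_1$ and with $u-x,\ y-w$ long multiples of $\xi_1$ of total length $L$, let $\Gamma$ be the straight segment from $x$ to $y$, and let the configuration put $a$ on the $n$ edges of $[u,w]$ and $b$ on every other edge of $\mathbb{Z}^d$, so $v(\Gamma)=na+Lb$. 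Three points remain, all for $n$ and $L$ large. \emph{(i) $\Gamma$ is the unique optimal path.} A competitor $\rho$ uses at most the $n$ $a$-edges and at least $L$ $b$-edges; writing $v(\rho)=\#a_\rho\,a+\#b_\rho\,b$, the part of $\tau(\rho)^2-\tau(\Gamma)^2$ that is linear in $L$ equals $2L\bigl[(\text{surplus }b\text{-edges})\|b\|^2-(\text{missed }a\text{-edges})(a,b)\bigr]$, positive since $(a,b)<\|b\|^2$; and the only length-$(L+n)$ path through all $a$-edges is, by collinearity of $x,y$, $\Gamma$. \emph{(ii) $\Gamma$ is not a geodesic:} its subpath $\Gamma'=[u,w]$ has $\tau(\Gamma')=n\|a\|$, while the $\cap$-shaped detour of length $n+2$ around $[u,w]$ runs on $b$-edges only and has passage time $(n+2)\|b\|<n\|a\|$ once $\|a\|>\|b\|$ and $n$ is large, so $\Gamma'$ is suboptimal. \emph{(iii)} Combining (i) and (ii), no geodesic joins $x$ and $y$.

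\emph{Expected main obstacle.} The argument above covers exactly the pairs $\{a,b\}$ with $(a,b)$ less than the squared length of the shorter vector --- in particular all equal-length pairs, for which the detour in (ii) must instead be taken as a parallel monotone path carrying a few $a$-edges, so that cancellation against its $\ell_1$-length makes it cheaper. The remaining geometry --- a genuinely shorter $b$ nearly parallel to $a$, so that $(a-b,b)\ge0$ for every admissible pair --- is the crux: there every obvious rerouting of a suboptimal subpath is globally cheaper, so one must make the suboptimal subpath $\Gamma'$ carry a ``wedge'' vector $pa+qb$ whose cheapest competitor $\sigma$ shares edges with $\Gamma\setminus\Gamma'$, so that splicing $\sigma$ in doubles some edge contributions and raises $\tau$. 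I would isolate and prove first the quantitative statement behind this --- that for linearly independent $a,b$ with $(a,b)\ge0$ one can choose realizable passage vectors $v(\Gamma'),v(\sigma)$ and a direction $R\in\cone\{a,b\}$ with $\|v(\sigma)\|<\|v(\Gamma')\|$ yet $(R,v(\sigma)-v(\Gamma'))\ge\tfrac12\bigl(\|v(\Gamma')\|^2-\|v(\sigma)\|^2\bigr)$ --- and then feed it into the same template, taking $R$ (the passage vector of $\Gamma\setminus\Gamma'$) long enough to dominate and to pin down $\Gamma$ as the unique optimum.
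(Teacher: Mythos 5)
Your ``if'' direction and the overall strategy (exhibit a configuration with two non-collinear passage vectors $a,b$ in which the unique optimal path between two points has a suboptimal subpath, which also yields the ``moreover'' clause and hence the ``only if'' part) are sound and in the same spirit as the paper. The computations in (i)--(iii) are essentially correct in the regime you state: with $a$ on the segment $[u,w]$ and $b$ elsewhere, a competitor with $m$ missed $a$-edges and $s\ge m$ surplus $b$-edges satisfies $\tau(\rho)^2-\tau(\Gamma)^2\ge 2L\bigl[s\|b\|^2-m(a,b)\bigr]-O(n^2)$, which is positive for large $L$ precisely because $(a,b)<\|b\|^2$; and the equal-norm subcase can indeed be repaired by a detour from $u$ to $w$ that keeps about half of the $a$-edges, since $\|ka+(n-k+2)b\|^2=(n+2)^2\|a\|^2-2(1-\cos)\,k(n-k+2)\|a\|^2<n^2\|a\|^2$ for suitable $k$ and large $n$.

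However, there is a genuine gap, and you have named it yourself: the case $\|a\|>\|b\|$ with $(a,b)\ge\|b\|^2$ (e.g.\ $a=(2,0)$, $b=(1,\delta)$ with $\delta$ small) is not proved, only sketched as an ``expected main obstacle.'' Linear independence plus positivity only gives $(a,b)<\|a\|\,\|b\|$, not $(a,b)<\|b\|^2$, so this case cannot be avoided, and in it your construction genuinely fails: comparing $\Gamma$ with the all-$b$ detour gives $\tau(\Gamma)^2-\bigl((L+n+2)\|b\|\bigr)^2=2L\bigl[n(a,b)-(n+2)\|b\|^2\bigr]+O(n^2)>0$ for large $n,L$, so the straight path through the $a$-block is not optimal at all and the candidate ``unique optimum with a bad subpath'' disappears --- exactly the phenomenon you describe as every rerouting being globally cheaper. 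The quantitative statement you say you ``would isolate and prove'' is the actual content of this case, and it is where the paper's proof does its real work: it chooses even $n_a<n_b$ with $n_b\|b\|<n_a\|a\|\le n_b(\|b\|+\varepsilon)$, appends a long all-$b$ leg $\Gamma_3$ of length $3n_b$ orthogonal to the $a$-segment, decides whether to fill the remaining edges with $a$ or $b$ according to which side of the orthogonal projection onto the line $\{ta+((3n_b+n_a)-t)b\}$ the point $t=n_a$ lies on, and then closes the argument with the estimate $\tau(\Gamma_1\cup\Gamma_3)^2\le n_b^2(\|b\|+\varepsilon)^2+6\mu n_b^2\|b\|(\|b\|+\varepsilon)+9n_b^2\|b\|^2<16n_b^2\|b\|^2$ for $\varepsilon=\|b\|/M$ with $M$ large. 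Without an argument of this kind (or a worked-out version of your ``wedge'' idea), the lemma is only established for the subfamily of pairs with $(a,b)<\|b\|^2$, so the proof as written is incomplete.
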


\begin{proof}
If $A$ is contained by a ray we obviously have this property as the passage time of a path is simply the sum of the passage times of the edges. On the other hand, assume that $A$ contains $a,b$ so that they are not contained by the same ray. We can clearly assume $\|a\|_{\mathcal{H}}\geq\|b\|_\mathcal{H}$. Moreover, by the assumption we have $(a,b)\leq \mu \|a\|_{\mathcal{H}}\|b\|_{\mathcal{H}}$ for some $0<\mu<1$, and $a,b\neq 0$. The proof from this point is a construction in which we have an optimal path which is not a geodesic.

First consider the case $\|a\|_{\mathcal{H}}=\|b\|_{\mathcal{H}}$. For this case we can give a very simple construction, see Figure \ref{abra1}. Notably, amongst the paths from $X$ to $Y$ there is a unique one with optimal $\ell_1$-length and passage vector $a+a+b+b$, notably one of the paths through $Z$. Consequently, this path is the unique optimal path from $X$ to $Y$. However, it cannot be a geodesic, as there is a path from $X$ to $Z$ with passage vector $b+a$, hence it has smaller passage time than the path with passage vector $a+a$. Thus there are no geodesics from $X$ to $Y$ at all.

\begin{figure}[h!]
  \includegraphics[width=200pt]{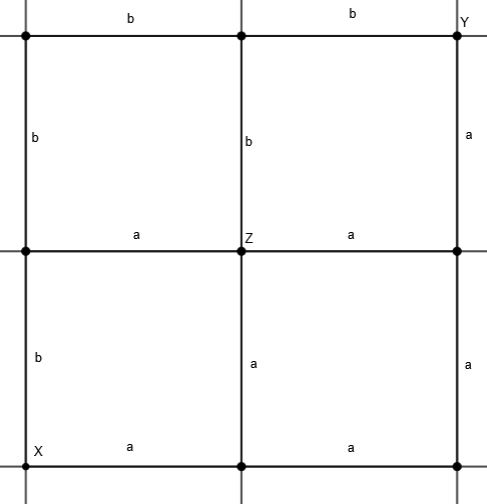}
  \caption{The case when $\|a\|_{\mathcal{H}}=\|b\|_{\mathcal{H}}$.}
  \label{abra1}
\end{figure}

Consider the other case, when there is a strict inequality between the norms of $a$ and $b$, that is $\|a\|_{\mathcal{H}}>\|b\|_{\mathcal{H}}$. For the norm of $a$ and $b$ we can define $n_a<n_b$ such that both of them are even and the inequalities
\begin{displaymath}
n_a\|a\|_{\mathcal{H}}>n_b\|b\|_{\mathcal{H}}, \text{ } n_a\|a\|_{\mathcal{H}}\leq n_b(\|b\|_{\mathcal{H}}+\varepsilon)
\end{displaymath}
simultaneously hold for some $\varepsilon>0$ to be fixed later. We consider the following configuration: on the line segment $\Gamma_1=[0,n_a\xi_1]$ let all the passage vectors be $a$, while in the line segment $\Gamma_3=[n_a\xi_1,n_a\xi_1+3n_b\xi_2]$ let the passage vectors be $b$. Moreover, on the line segments 
$$[0,-\frac{n_b-n_a}{2}\xi_2],\text{ } [-\frac{n_b-n_a}{2}\xi_2,n_a\xi_1-\frac{n_b-n_a}{2}\xi_2], \text{ } [n_a\xi_1-\frac{n_b-n_a}{2}\xi_2, n_a\xi_1]$$
let all the passage vectors be $b$. (The union of these line segments will be denoted by $\Gamma_2$.) Concerning the remaining edges we separate two cases. (See Figure \ref{orthogonalpr} for the discussion belonging to one of them.) Notably, we know that if we consider the line $L=\{ta+((3n_b+n_a)-t)b \text{ : } t\in\mathbb{R}\}\subseteq\mathcal{H}$, then there is a unique $t=t_0\in\mathbb{R}$, for which the norm of $p_{t_0}=t_0a+((3n_b+n_a)-t_0)b$ is minimal, that is the orthogonal projection of the origin to $L$.
\begin{figure}[h!]
  \includegraphics[width=350pt]{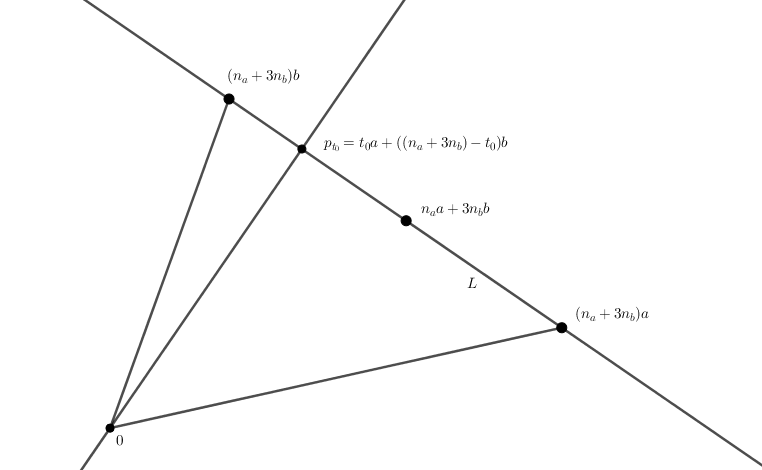}
  \caption{The definition of $L$, example for the first case.}
  \label{orthogonalpr}
\end{figure}
If we increase or decrease $t$ gradually from $t_0$, then due to the orthogonality of $[0,p_{t_0}]$ and $L$, the norm of $p_t$ is strictly increasing in both directions. From this observation we infer that if we consider $t=n_a$, then if we move $t$ towards one of the directions, the norm will strictly increase. Now if this increase shows up in the direction of $(3n_b+n_a)a$, then we place the passage vector $a$ to all the remaining edges. On the other hand, if this increase appears in the direction of $(3n_b+n_a)b$, then we place the passage vector $b$ to all the remaining edges.

Now let us calculate the square of the passage times of $\Gamma_1$ and $\Gamma_2$:
\begin{displaymath}
\tau(\Gamma_1)^2=n_a^2\|a\|_{\mathcal{H}}^2, \text{ } \tau(\Gamma_2)^2=n_b^2\|b\|_{\mathcal{H}}^2.
\end{displaymath}
Consequently, we have $\tau(\Gamma_1)>\tau(\Gamma_2)$ by the choice of $n_a,n_b$, hence $\Gamma_1$ cannot be optimal. Thus if we have that $\Gamma_1\cup \Gamma_3$ is the only optimal path from the origin to $n_a\xi_1+3n_b\xi_2$, that concludes the proof, as one of its subpaths is not optimal, consequently, it cannot be a geodesic. In the following we will examine whether there are other optimal paths between these points and can they be geodesics.

First consider the case in which we placed the passage vector $b$ to all the remaining edges, that is the norm of $n_a a+3n_b b$ is smaller than the norm of $ta+((3n_b+n_a)-t)b$ for any $t<n_a$. In this case any path $\Gamma \neq \Gamma_1\cup\Gamma_3$ uses at least $n_a+3n_b$ edges, and at least $3n_b+1$ of them has passage vector $b$. Thus by the assumption of this case concerning the norms, we certainly have that the norm of the passage vector of $\Gamma$ is larger than the norm of $n_a a+3n_b b=v(\Gamma_1\cup\Gamma_3)$. (At this point we also use the positivity of $A$: if $\Gamma$ uses more edges, than $n_a+3n_b$, then we reduce the passage time by forgetting about edges with passage vector $a$.) Thus this case is concluded, $\Gamma_1 \cup \Gamma_3$ is the only optimal path from the origin to $n_a\xi_1+3n_b\xi_2$.

Consider now the other case, that is the norm of $n_a a+3n_b b$ is smaller than the norm of $ta+((3n_b+n_a)-t)b$ for any $t>n_a$. Assume that there is another path $\Gamma\neq \Gamma_1\cup \Gamma_3$ from the origin to $n_a\xi_1+3n_b\xi_2$ which is optimal. It must hit the line segment $[-\frac{n_b-n_a}{2}\xi_2,n_a\xi_1-\frac{n_b-n_a}{2}\xi_2]$: indeed, otherwise $\Gamma$ would contain at least $n_a$ edges with passage vector $a$. Now if $|\Gamma|>n_a+3n_b$, it immediately implies that the passage time of $\Gamma$ exceeds the passage time of $\Gamma_1\cup \Gamma_3$, a contradiction. On the other hand, if $|\Gamma|=n_a+3n_b$, the number of edges with passage vector $a$ surely exceeds $n_a$. Hence by the starting assumption of this case concerning the norms, the norm of the passage vector of $\Gamma$ is larger than the norm of $n_a a+3n_b b=v(\Gamma_1\cup\Gamma_3)$, a contradiction. Hence $\Gamma$ hits the line segment $[-\frac{n_b-n_a}{2}\xi_2,n_a\xi_1-\frac{n_b-n_a}{2}\xi_2]$ at some point $x$ indeed. Denote its first part from the origin to $x$ by $\Gamma'$ and its second part from $x$ to $n_a\xi_1+3n_b\xi_2$ by $\Gamma''$. Assume that $\Gamma'$ has an edge which is not contained by $\Gamma_2$, and hence its passage vector is $a$. In this case $\Gamma'$ has an entire subpath $\Gamma_0'$ connecting points of $\Gamma'$, and with edges of passage vector $a$. However, $\Gamma_0'$ can be replaced with an $\ell_1$-optimal path such that all of its edges have passage vector $b$. Hence its passage time is lower than $\tau(\Gamma_0')$, yielding that $\Gamma_0'$ cannot be optimal. Consequently, $\Gamma$ has a subpath which is not optimal, while $\Gamma$ is optimal, thus $\Gamma$ cannot be a geodesic. We can proceed similarly if $\Gamma''$ has an edge which is not contained by $\Gamma_2\cup\Gamma_3$. Thus the only case remaining is that $\Gamma$ contains all the edges of $\Gamma_2\cup \Gamma_3$, and hence $\tau(\Gamma)$ is at least $\tau(\Gamma_2\cup \Gamma_3)$. However, in this case the square of its passage time equals $(4n_b)^2\|b\|_{\mathcal{H}}^2$, while the square of passage time of $\Gamma_1\cup \Gamma_3$ can be estimated by
\begin{equation}
\begin{split} \tau(\Gamma_1\cup \Gamma_3)^2 &=n_a^2\|a\|_{\mathcal{H}}^2+6n_a n_b(a,b)+9n_b^2\|b\|_{\mathcal{H}}^2 \\ & \leq n_b^2(\|b\|_{\mathcal{H}}+\varepsilon)^2+6\mu n_b^2\|b\|_{\mathcal{H}}(\|b\|_{\mathcal{H}}+\varepsilon)+9n_b^2\|b\|_{\mathcal{H}}^2. \end{split}
\end{equation}
Thus by $\tau(\Gamma_1\cup \Gamma_3)^2>\tau(\Gamma_2\cup \Gamma_3)^2=(4n_b)^2\|b\|_{\mathcal{H}}^2$ we conclude the following inequality with $\varepsilon=\frac{\|b\|_{\mathcal{H}}}{M}$:
\begin{displaymath}
6n_b^2\|b\|_{\mathcal{H}}^2<6\mu n_b^2\|b\|_{\mathcal{H}}^2+(2+6\mu) n_b^2\|b\|_{\mathcal{H}}\varepsilon+n_b^2\varepsilon^2=\left(6\mu+\frac{(2+6\mu)}{M}+\frac{1}{M^2}\right)n_b^2\|b\|_{\mathcal{H}}^2.
\end{displaymath}
However, for sufficiently large $M$ this inequality cannot hold. This is a contradiction, $\Gamma$ cannot be optimal in this case. Hence we found that there can be no optimal path from 0 to $n_a\xi_1+3n_b\xi_2$ which is a geodesic. It concludes the proof. \end{proof}

\begin{proof}[Proof of Theorem 1.4] From the remark following Lemma 5.2 we know that $A$ is positive by the assumption of the theorem. Moreover, from the proof of Lemma 5.3 it follows very quickly that if $A$ is not contained by a ray then there are nontrivial open sets in which there are no geodesics between certain points, as we can consider sufficiently small neighborhoods of the configurations constructed there. However, as $\Omega$ is a Baire space, nontrivial open sets cannot be disjoint from a residual subset of $\Omega$. Thus $A$ is contained by a ray in fact. It concludes the proof of Theorem 1.4. \end{proof}

\section{Geodesic rays in Hilbert percolation}

\begin{proof}[Proof of Theorem \ref{hilbertrays}]

The case when $A$ is contained by a ray is already covered by Theorem 1.1. Hence we can assume that $A$ is not linearly isomorphic to a subset of the nonnegative reals. Still, the proof will be quite similar to the proof of Theorem 1.1, hence we will focus on the differences this time.

Let $a,b\in A$ so that we cannot get one from another by multiplying with a nonnegative scalar and $\|a\|_{\mathcal{H}}\leq\|b\|_{\mathcal{H}}$. Consequently, we might choose $0<\mu<1$ satisfying $(a,b)\leq \mu\|b\|_{\mathcal{H}}^2$. First we will prove that the origin is the starting point of distinct geodesics in a meager subset only. Let $U, U', K_1, K_2, E_U, E^*$ be as in the proof of Theorem 1.1 defined in terms of the parameters $p,q,q',r$ to be fixed later. We will define $V\subseteq{U'}$ as a cylinder set which has nontrivial projections to the edges in $E_U\cup E^*$. The underlying concept is the same as in that proof: for the configurations in $V$ we would like to have essentially one (and the same) geodesic from the boundary $\partial K_1$ to the boundary $\partial K_2$, notably the line segment connecting $p\xi_1$ and $q\xi_1$. By this we mean that for any lattice points $x_1\in \partial K_1$ and $x_2 \in\partial K_2$, a geodesic $\Gamma$ from $x_1$ to $x_2$ eventually arrives in $p\xi_1$, and then it goes along the line segment $[p\xi_1,q\xi_1]$. It would be sufficient exactly as it was earlier.

We will obtain this property following the same strategy: our purpose is to define $V$ so that sufficiently many paths in $\partial K_1 \cup [p\xi_1,q\xi_1] \cup \partial K_2$ are cheap while other paths in $K_2 \setminus K_1$ are expensive for configurations in $V$. To this end, we use a slightly more complicated construction this time, in which we will guarantee paths to be cheap by having edges with passage vectors $a$ and $b$ alternatingly. To obtain this, consider the connected subgraph $G$ of $\mathbb{Z}^d$ in $\partial K_1 \cup [p\xi_1,q\xi_1] \cup \partial K_2$ and its vertex $p\xi_1$. Let $G_0$ be the breadth-first search tree rooted at vertex $p\xi_1$. By definition, in $G_0$ the unique path from $p\xi_1$ to any other vertex $x\in G$ is optimal in the graph distance of $G$, which is equivalent to being optimal in $\ell_1$ inside $\partial K_1 \cup [p\xi_1,q\xi_1] \cup \partial K_2$. Moreover, as the points of $[p\xi_1,q\xi_1]$ are cut vertices of $G$, that is the deletion of any of them cuts $G$ into two distinct connected components, it obviously implies that for any lattice points $x\in \partial K_1\cup [p\xi_1,q\xi_1)$ and $y\in (p\xi_1,q\xi_1]\cup \partial K_2$, the unique path from $x$ to $y$ contained by $G_0$ is optimal in $G$. Now we can define projections to edges of $G_0$ such that they equal small neighborhoods of $a$ and $b$ and for any such $G$-optimal path these projections appear alternatingly. Indeed, for branches rooted at $p\xi_1$ and proceeding towards $\partial K_2$ let us define the projection to the first edge to be a neighborhood of $b$, and the later ones to be alternatingly neighborhoods of $a$ and $b$. On the other hand, for branches rooted at $p\xi_1$ and contained by $\partial K_1$, let us define the projection to the first edge to be a neighborhood of $a$, and the later ones to be alternatingly neighborhoods of $b$ and $a$. It yields indeed that for any lattice points $x\in \partial K_1\cup [p\xi_1,q\xi_1)$ and $y\in (p\xi_1,q\xi_1]\cup \partial K_2$, on the $G$-optimal path from $x$ to $y$ contained by $G_0$ the projections equal to neighborhoods of $a$ and $b$ alternatingly. On any other edge inside $K_2 \setminus \inte K_1$ let the projection be a small neighborhood of $b$. At this point, we think of all of these projections being the singletons $\{a\}$ and $\{b\}$ respectively, which are not necessarily open in $A$, but might be fattened suitably later. Then the proof relies on the fact that for any $G$-optimal path $\Gamma'$ of the above type with $|\Gamma'|\geq{2}$ we have
\begin{equation}
\begin{split} \tau(\Gamma')^2 & \leq\left\|\frac{|\Gamma'|+1}{2}b+\frac{|\Gamma'|-1}{2}a\right\|_{\mathcal{H}}^2 \\  & \leq \left(\frac{|\Gamma'|+1}{2}\right)^2\|b\|_{\mathcal{H}}^2+\left(\frac{|\Gamma'|-1}{2}\right)^2\|a\|_{\mathcal{H}}^2 + \left(\frac{(|\Gamma'|+1)(|\Gamma'|-1)}{2}\right)(a,b) \\ & \leq \frac{|\Gamma'|^2+1}{2} \|b\|_{\mathcal{H}}^2+ \frac{|\Gamma'|^2-1}{2}(a,b) \leq \frac{3}{4}|\Gamma'|^2\|b\|_{\mathcal{H}}^2+\frac{1}{4}|\Gamma'|^2(a,b) \\ & \leq|\Gamma'|^2\left(\frac{3}{4}+\frac{1}{4}\mu\right)\|b\|_{\mathcal{H}}^2.\end{split}
\label{squareestimate}
\end{equation}
Consequently, for $\lambda=\sqrt{\frac{3}{4}+\frac{1}{4}\mu}<1$ we have
\begin{equation}
\tau(\Gamma')\leq \lambda|\Gamma'|\|b\|_{\mathcal{H}}.
\label{estimate}
\end{equation}
Now proceeding towards a contradiction assume that there is a geodesic $\Gamma_0$ from $\partial K_1$ to $\partial K_2$ which does not contain the line segment $[p\xi_1,q\xi_1]$. Similarly to arguments in the proof of Theorem 1.1, by passing to a subpath we can infer the existence of a geodesic $\Gamma$ from $x_1\in\partial K_1\cup [p\xi_1,q\xi_1)$  to $x_2\in(p\xi_1,q\xi_1]\cup \partial K_2$ which uses only edges not contained by $\partial K_1 \cup [p\xi_1,q\xi_1] \cup \partial K_2$. If $|x_1-x_2|=1$, it is clearly impossible, thus we may assume $|x_1-x_2|>{1}$. Our aim is to show a cheaper path $\Gamma'$ contained by $\partial K_1 \cup [p\xi_1,q\xi_1] \cup \partial K_2$ as that would yield a contradiction. To this end, we construct $\Gamma'$ by a similar method as in the proof of Theorem 1.1: in this case, let it be the unique path from $x_1$ to $x_2$ in $G_0$, which is optimal in $G$. For this path, we can use (\ref{estimate}). Indeed, by this bound, if we want to show $\tau(\Gamma')<\tau(\Gamma)$ to obtain a contradiction, we can do a little trick and replace all the passage vectors of edges in $\partial K_1 \cup [p\xi_1,q\xi_1] \cup \partial K_2$ by $\lambda b$. As $|\Gamma'|\geq{2}$ necessarily, it does not decrease the passage time of $\Gamma'$, and the passage time of $\Gamma$ does not change at all. Consequently, all the passage vectors in $K_2\setminus \inte K_1$ are multiples of $b$, and the ones in $\partial K_1 \cup [p\xi_1,q\xi_1] \cup \partial K_2$ are cheaper than the others. Hence the situation we face is linearly isomorphic to the one in the proof of Theorem 1.1. Thus for a suitable choice of the parameters $p,q,q',r$ we get a contradiction for this specific configuration, $\tau(\Gamma')<\tau(\Gamma)$, as $\Gamma'$ is a path which is optimal in $\ell_1$ amongst the paths contained by $\partial K_1 \cup [p\xi_1,q\xi_1] \cup \partial K_2$, while $\Gamma$ does not have any edge in this set, which is sufficient by the note following that proof. Finally, by taking small enough neighborhoods to be the projections instead of the singletons this inequality will not fail as there are only finitely many pairs $\Gamma,\Gamma'$ to consider. It concludes the proof of the claim that the origin is the starting point of distinct geodesics only in a meager subset. The statement of the theorem is obtained from this claim the same way as in the proof of Theorem 1.1. \end{proof}

To conclude this section, we provide an example for a closed set $A$ so that there are no geodesic rays at all generically. (As $\Omega$ is a Baire space in this case, it means indeed that in a large subset of $\Omega$ there are no geodesic rays.) Let $d=2$, and let $A=\{a,b, 2b\}=\{(1,0),(0,1), (0,2)\}\subseteq{\mathcal{H}}=\mathbb{R}^2$. It suffices to prove that there is no geodesic ray starting from the origin generically. Fix a cylinder set $U$. Let us use a construction similar to the one in the proof of Theorem 1.1: fix a cylinder set $V$ such that there exists $K_1,K_2$ as earlier so that any geodesic ray starting from the origin eventually reaches $q\xi_1$ and do not enter $\inte K_2$ again. More explicitly, in $V$ the passage vectors of edges in $\partial K_1 \cup [p\xi_1,q\xi_1] \cup \partial K_2$ equal $b$ while of other edges in $K_2\setminus K_1$ they equal $2b$. Now it suffices to fix a finite number of further passage vectors so that in such configurations there is no geodesic from $q\xi_1$ not entering $\inte K_2$: that would mean that there is no geodesic ray starting from the origin. We do so by fixing some passage vectors in $q\xi_1+D_7$ to be $a$ or $b$ as shown in Figure \ref{nogeodesics} ($D_7$ denotes the closed ball with $\ell_1$ radius 7 centered at the origin): in the positive half-plane, the red, decorated edges correspond to passage vector $a$, while the blue, simple ones to passage vector $b$. The passage vectors in the negative half-plane are obtained by reflection to the horizontal axis.

\begin{figure}[h!]
  \includegraphics[width=250pt]{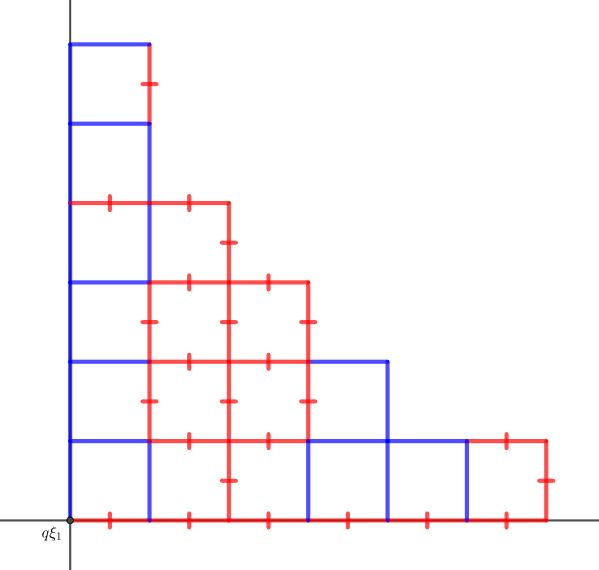}
  \caption{Red decorated edges have passage vector $a$, while blue simple ones have passage vector $b$.}
  \label{nogeodesics}
\end{figure}

Now we claim that there is no geodesic from $q\xi_1$ to points with $q\xi_1+x$ where $|x|=6$ and has nonnegative first coordinate. As we gained the passage vectors in the negative halfplane by reflection, it suffices to prove this claim for $x$ with nonnegative second coordinate, too. Now it is easy to check that if $x=(0,6)$ or $x=(6,0)$ then there are paths with passage vector $4a+4b$ from $q\xi_1$ to $q\xi_1+x$. Meanwhile all the paths with minimal $\ell_1$-length 6 has passage vector $6a$ or $6b$. But $\|4a+4b\|=4\sqrt{2}<6=\|6a\|=\|6b\|$. Consequently, the paths with passage vector $4a+4b$ are the optimal ones, while none of them is a geodesic as their first 2 or first 4 steps cannot be optimal. Thus there is no geodesic to such points. On the other hand, if both of the coordinates of $x$ are positive, then there are optimal paths with passage vector $3a+3b$ from $q\xi_1$ to $q\xi_1+x$. However, their first 2 or first 4 steps are not optimal as they have not been before. Thus there are no geodesics to such points either. Consequently, there is no geodesic from $q\xi_1$ to points with $q\xi_1+x\notin \inte K_2$ where $|x|=6$. Thus the origin cannot be the starting point of a geodesic ray as any such geodesic ray eventually reaches $q\xi_1$ and should continue for an infinite number of steps as a geodesic outside of $\inte K_2$, which is impossible by the previous observation. It yields that there are no geodesic rays at all generically as we claimed.

\section{Concluding remarks and open problems}

In the above sections we answered several questions, but in the meantime new ones arose. In the following, we list a few of them.

In the third section we made some progress concerning the asymptotic behaviour of $\frac{B(t)}{t}$, and we gave a necessary condition concerning any limit set $K=\lim_{n\to\infty}\frac{B(t_n)}{t_n}$, more explicitly we proved that such a set must be in $\overline{\mathcal{W}_A^d}$. The point of view provided by this recognition helped us in proving that all the convex sets of $\mathcal{K}_A^d$ arise as limit sets. However, on one hand, we could not prove that this condition is sufficient, and on the other hand, the definition of the family $\mathcal{W}_A^d$ is quite unhandy and does not really extend our understanding about the limit sets in its own right as it is somewhat just a continuous and more general rephrasing of the definition of a limit set.

\begin{quest} Is it true that all the sets in $\mathcal{W}_A^d$ arise as limit sets? If not, find what is the good family to consider, if yes, try to define it more conveniently. \end{quest}

In the fourth section we understood the nature of passage times in the strongly positively dependent case of the Hilbert first passage percolation in the finite dimensional case for bounded $A$. Our argument relied on Dirichlet's approximation theorem, which has variants for infinite dimensional spaces, too, but they are insufficient for our purposes (see \cite{FSU}). It would be nice to understand the behaviour of the percolation in these cases as well. Moreover, we also capitalized on the boundedness of $A$, hence it should also be examined what happens if this condition is dropped.

\begin{quest} What can be said about the Hilbert percolation in the strongly positively dependent case if the space is infinite dimensional or $A$ is not bounded?\end{quest}

In the sixth section we provided an example for a set $A$ such that there are no geodesic rays at all generically. The proof heavily relied on the simple structure of $A$: roughly we used the fact that there is a configuration in which $q\xi_1$ is not a starting point of geodesics longer than 5 edges. Now it is reasonable to ask if it is always the case when $A$ is not linearly isomorphic to a subset of the nonnegative reals.

\begin{quest} Is it true that there exists a geodesic ray generically if and only if $A$ is linearly isomorphic to a subset of $[0,+\infty)$?\end{quest}

\subsection*{Acknowledgements} 

I am highly grateful to Zolt\'an Buczolich for the time he spent with proofreading the paper, it helped a lot in improving the quality of the exposition. Moreover, I am thankful to Korn\'elia H\'era for the interesting discussion about the earlier results during which she asked the question leading to the second part of this work.


\begin{thebibliography}{99}

\bibitem{AD} {\sc A. Auffinger, M. Damron}, {\it Differentiability at the edge of the limit
shape and related results in first passage percolation}, Probability Theory and Related Fields, {\bf156} (2013), 193-227.

\bibitem{FPP} {\sc A. Auffinger, M. Damron, J. Hanson}, {\it 50 years of first passage percolation}, University Lecture Series, {\bf68} (2016).

\bibitem{B} {\sc M. Bjorklund}, {\it The asymptotic shape theorem for generalized first passage
percolation}, Annals of Probab., \textbf{38} (2010), 632–660.

\bibitem{Bo} {\sc D. Boivin}, {\it First passage percolation: the stationary case}, Probab. Theory
Relat. Fields, \textbf{86} (1990), 491–499.

\bibitem{CD} {\sc J. T. Cox, R. Durrett}, {\it Some limit theorems for percolation with necessary
and sufficient conditions}, Annals of Probab., \textbf{9} (1981), 583-603.

\bibitem{FSU} {\sc L. Fishman, D. S. Simmons, and M. Urbánski}, {\it Diophantine approximation in Banach spaces}, J. Théor. Nombres Bordeaux, {\bf26} no.2. (2014), 363–384.

\bibitem{HW} {\sc J. Hammersley, D. Welsh}, {\it First-passage percolation, subadditive processes,
stochastic networks, and generalized renewal theory}, Proc. Internat. Res.
Semin., Statist. Lab., Univ. California, Berkeley, Calif., Springer-Verlag,
New York, (1965), 61–110.

\bibitem{K} {\sc H. Kesten},  {\it \'Ecole d'\'Et\'e de Probabilit\'es de Saint Flour XIV}, Lecture Notes in Mathematics, {\bf1180} (1986), 125-264.

\bibitem{Ku} {\sc K. Kuratowski}, {\it Topologie}, Vol. 1, 4th ed., PWN, Warsaw, 1958; English transl., Academic Press, New York; PWN, Warsaw, (1966).

\bibitem{M} {\sc B. Maga}, {\it Baire categorical aspects of first passage percolation}, Acta Mathematica Hungarica, \textbf{156} (1) (2018), 145-171.

\bibitem{WR} {\sc J. C. Wierman, W. Reh}, {\it On conjectures in first passage percolation
theory}, Annals of Probab., {\bf6} (1978), 388-397.




\end{thebibliography}
\end{document}